\newtheorem{theorem}{Theorem}[section]
\newtheorem{proposition}[theorem]{Proposition}
\newtheorem{lemma}[theorem]{Lemma}
\newtheorem{corollary}{Corollary}[theorem]
\theoremstyle{definition}
\newtheorem{definition}[theorem]{Definition}
\newtheorem{example}[theorem]{Example}
\theoremstyle{remark}
\newtheorem{remark}[theorem]{Remark}
\newcommand{\SP}{\text{SP}\,}
\newcommand{\SG}{\text{SG}\,}
\DeclareMathOperator{\CDS}{\textbf{CDS}}
\numberwithin{equation}{section}
\newtheorem*{repp@theorem}{\repp@title (reformulated)}
\newcommand{\newrepptheorem}[2]{%
\newenvironment{repp#1}[1]{%
 \def\repp@title{#2 \ref{##1}}%
 \begin{repp@theorem}}%
 {\end{repp@theorem}}}
\theoremstyle{definition}
\newtheorem*{rep@theorem}{\rep@title \ continued}
\newcommand{\newreptheorem}[2]{%
\newenvironment{rep#1}[1]{%
 \def\rep@title{#2 \ref{##1}}%
 \begin{rep@theorem}}%
 {\end{rep@theorem}}}
\begin{document}

\title{Classifying Permutations under Context-Directed Swaps and the \textbf{cds} game}

\author{G. Brown}
\address{(Garrett Brown) Harvard University}
\curraddr{153 Kirkland Mail Center, 95 Dunster Street, Cambridge, MA, 02138, USA}
\email{garrettbrown@college.harvard.edu}

\author{A. Mitchell}
\address{Ohio State University}
\curraddr{1922 Summit St, Columbus, OH, 43201}
\email{mitchell.1583@osu.edu}

\author{R. Raghavan}
\address{Ohio State University}
\curraddr{Columbus, OH, 43210}
\email{raghavan.63@osu.edu}

\author{J. Rogge}
\address{University of Illinois at Urbana-Champaign}
\curraddr{2557 W Fargo Ave, Chicago IL, 60645}
\email{jwrogge2@illinois.edu}

\author{M. Scheepers}
\address{Boise State University}
\curraddr{1910 University Drive, Boise, ID, 83725}
\email{mscheepe@boisestate.edu}

\subjclass[2010]{05A05, 05A05, 91A46, 68P10}

\date{October 31, 2020} 
\keywords{permutation sorting, group theory, combinatorics, game theory}

\begin{abstract}
A special sorting operation called Context Directed Swap, and denoted \textbf{cds}, performs certain types of block interchanges on permutations. 
When a permutation is sortable by \textbf{cds}, then \textbf{cds} sorts it using the fewest possible block interchanges of any kind.  
This work introduces a classification of permutations based on their number of \textbf{cds}-eligible contexts.
In prior work an object called the strategic pile of a permutation was discovered and shown to provide an efficient measure of the non-\textbf{cds}-sortability of a permutation. 
Focusing on the classification of permutations with maximal strategic pile,  a complete characterization is given when the number of \textbf{cds}-eligible contexts is close to maximal as well as when the number of eligible contexts is minimal. A group action that preserves the number of \textbf{cds}-eligible contexts of a permutation provides, via the orbit-stabilizer theorem, enumerative results regarding the number of permutations with maximal strategic pile and a given number of \textbf{cds}-eligible contexts.  
Prior work introduced a natural two-person game on permutations that are not \textbf{cds}-sortable. The decision problem of which player has a winning strategy in a particular instance of the game appears to be of high computational complexity. Extending prior results, this work presents new conditions for player ONE to have a winning strategy in this combinatorial game. 
\end{abstract}

\maketitle

\section{Introduction}
The sortability of \textit{permutations}, non-repetitve arrays of integers, is of interest to a variety of fields including scientific computing and genetics.  
We study a a particular block-interchange sorting operation postulated to occur in the genomic sorting of single-celled organisms called \textit{ciliates} \cite{PRESCOTT}.  This operation participates in decrypting the ciliate's micronuclear genome to construct a new macronucleus.  We refer to this block-interchange operation as \textbf{cds}, abbreviating ``context directed swap".

A permutation $\pi$ is \textit{\textbf{cds}-sortable} if successive applications of \textbf{cds} on $\pi$ results in the identity permutation.  Not all permutations are \textbf{cds}-sortable.  The criteria for sortability were studied previously in \cite{ADAMYK, PETRE1, PETRE2} and others.  In \cite{Christie} Christie discovered that \textbf{cds} is a \textit{minimal block-interchange}, sorting a \textbf{cds}-sortable permutation using the fewest possible block interchanges.  If a permutation is not \textbf{cds}-sortable, successive applications of \textbf{cds} will result in a permutation where \textbf{cds} no longer applies.  Define a permutation where \textbf{cds} does not apply as a \textbf{cds} \textit{fixed point}.  The set of \textbf{cds} fixed points reachable from a permutation $\pi$, excluding the identity, is called the \textit{strategic pile} of the permutation, and is denoted $SP(\pi)$.  An interesting phenomenon arises when a permutation is not \textbf{cds}-sortable; the fixed point of a permutation reached after applications of \textbf{cds} may depend on the order in which the \textbf{cds} operations were applied.  

This phenomenon on non \textbf{cds}-sortable permutations gives rise to a combinatorial game previously investigated in \cite{ADAMYK} and \cite{JANSEN}. In this game player ONE is assigned a subset of the strategic pile.  The symbol \textbf{CDS}$(\pi, A)$ denotes this game on permutation $\pi$ where $A$ is the subset of the strategic pile assigned to player ONE.  Beginning with ONE, players ONE and TWO take turns successively applying \textbf{cds} to $\pi$ until a fixed point is reached.  If the fixed point is in $A$, ONE wins; otherwise, TWO wins.  Note that the number of moves in the game \textbf{CDS}$(\pi,A)$ is bounded by the length of $\pi$ and thus the game is finite.  By Zermelo's Theorem \cite{ZERMELO}, there exists a winning strategy for some player.  Criteria for determining beforehand which of player ONE or player TWO has a winning strategy in a given instance of the game has yet to be discovered.  For \textbf{cds} non-sortable permutations at large \cite{ADAMYK} discovered a lowerbound on the size of $A$ relative to the size of the strategic pile that ensures that ONE has a winning strategy in the game \textbf{CDS}($\pi,A$).  In \cite{JANSEN} it was discovered for some class of permutations this bound is optimal. 
We broaden the exploration of tight bounds beyond the case explored in \cite{JANSEN}. 

The basic definitions and notations as well as formal definitions for \textbf{cds}, \textbf{cds}-sortability, and strategic pile are introduced in section 2.  Sections 3 through 5 will present our results.  
In section 3, we define a group action on permutations whose strategic pile is said to be \textit{maximal}.  In section 4, we classify permutations with maximal strategic pile based on the number of possible applications of \textbf{cds}.  In section 5, we observe a tighter bound for $A$ in the game \textbf{CDS}$(\pi,A)$ for permutations $\pi$ with maximal strategic pile and a certain number of valid pointer contexts. Finally, in section 6 we discuss directions for future work.

\section{Preliminaries}

In this paper  the \emph{symmetric group} $S_n$ is the group of bijections $\pi : \mathbb Z_n \rightarrow \mathbb Z_n$, under function composition. An element of $S_n$ is said to be a \emph{permutation} of length $n$. All numbers appearing in permutations are always assumed to be coset representatives modulo $n$. In general we choose the smallest \emph{strictly positive} member of the coset as representative, e.g. $n$ is always the representative of the identity element. 
A permutation $\pi \in S_n$ will be denoted
\[
\lbrack \ \pi(1) \ \pi(2) \ \pi(3) \ \ldots \ \pi(n) \ \rbrack
\]
 For convenience 
 the symbol $\lbrack k\rbrack$ denotes the set $\{1, ..., k\}$. A permutation $\pi$ is \emph{sorted} if the entries appear in increasing order, or more precisely, if $\pi$ is the identity function.

\begin{definition} Fix a positive integer $n$.
\begin{enumerate}
    \item{The set $P_n = \{(1,2),\; (2,3),\; \cdots\; (n-1,n)\}$ is said to be the set of $S_n$ \emph{pointers}.}
   \item{For a positive integer $p$, the symbol $p^*$ denotes the pointer $(p,p+1)$.}
    \item{For each positive integer $k$, define $\text{L}(k) = (k-1,k)$ and $\text{R}(k) = (k,k+1)$. $\text{L}(k)$ is said to be the \emph{left pointer} of $k$, and $\text{R}(k)$ is said to be the \emph{right pointer} of $k$.}
    \item{To each permutation $\pi =\lbrack \pi(1)\ \pi(2)\ \cdots \ \pi(n-1)\ \pi(n)\rbrack \in\text{S}_n$ associate its 
 \emph{pointer word} $W(\pi) = \lbrack L(\pi(1))\ R(\pi(1))\ \cdots \ L(\pi(i))\ R(\pi(i))\ \cdots \ L(\pi(n))\ R(\pi(n)) \rbrack$ with entries $(0,1)$ and $(n,n+1)$ removed.} 
\item{For a pointer $p = (j-1,j)$ of permutation $\pi$, the \emph{pointer word, ignoring $p$} is denoted $W(\pi)\setminus p$, and is the result of removing the segment $L(\pi(j)) \; R(\pi(j))$ from $W(\pi)$}
\end{enumerate}
\end{definition}
 Note that the function 
 \[
  W: S_n \rightarrow P_n^{2n-2}
 \]
 is a one-to one function. From an element $\sigma$ in the range of $W$ one can uniquely recover the permutation $\pi\in S_n$ for which $W(\pi) = \sigma$. 
Note that in $W(\pi)$ every pointer appears exactly twice; the pointer $(k, k+1)$ is the right pointer of the element $k$ and the left pointer of the element $k+1$. Thus $W(\pi)$ is an example of a \emph{double occurrence word} \cite{STZ} over the alphabet $P_n$. 

\begin{example}\label{ex:pointers}
Let $\pi$ be the permutation $\lbrack\ 6 \ 3 \ 5 \ 1 \ 2 \ 4 \ \rbrack$. Then the pointer word of $\pi$ is
\[
 W(\pi) = \lbrack (5,6) \quad(2,3) \quad(3,4) \quad (4,5)  \quad (5,6) \quad (1,2)  \quad(1,2)  \quad (2,3) \quad (3,4) \quad (4,5) \rbrack 
\]
 When convenient we write 
\[
\lbrack \ _{(5, 6)}6\quad _{(2, 3)}3_{(3, 4)}\quad _{(4, 5)}5_{(5, 6)}\quad 1_{(1, 2)}\quad _{(1, 2)}2_{(2, 3)}\quad _{(3, 4)}4_{(4, 5)} \ \rbrack
\]
to display both $\pi$ and its pointer word, or
\[
\lbrack \ _{(5, 6)}6_{(6,7)}\quad _{(2, 3)}3_{(3, 4)}\quad _{(4, 5)}5_{(5, 6)}\quad _{(0,1)}1_{(1, 2)}\quad _{(1, 2)}2_{(2, 3)}\quad _{(3, 4)}4_{(4, 5)} \ \rbrack
\]
to display both $\pi$ and its extended pointer word.

For the pointer $2^* = (2,3)$, the pointer word of $\pi$, ignoring $2^*$, is
\[
 W(\pi)\setminus 2^* = \lbrack (5,6) \quad (4,5)  \quad (5,6) \quad (1,2)  \quad(1,2)  \quad (2,3) \quad (3,4) \quad (4,5) \rbrack. 
\]
\end{example}

The notion of an adjacency in a permutation plays a fundamental role in the context directed swap operation \textbf{cds} to be introduced shortly. In preparation for \textbf{cds}, we now define notions to facilitate the exposition.
\begin{definition}
Let $\pi \in S_n$ and $k \in \mathbb Z_n$ be such that $\pi(k) \neq n$. We say that $\pi(k)$ and $\pi(k+1)$ form an \emph{adjacency} if
\[
\pi (k+1) = \pi (k)+1
\]
\end{definition}

When there is an adjacency between $\pi(k)$ and $\pi(k+1)$, then in the pointer word of $\pi$ the right pointer of $\pi(k)$, i.e. $R(\pi(k)) = (\pi(k),\pi(k) + 1) = \pi(k)^*$, appears directly adjacent the left pointer of $\pi(k+1)$, $L(\pi(k+1))$, and $R(\pi(k)) = L(\pi(k+1))$. Thus, when in $\pi$ there is an adjacency between $\pi(k)$ and $\pi(k+1)$, there is a duplication in $W(\pi)$.  
We also say that there is an \emph{adjacency about the pointer} $\pi(k)^* = (\pi(k),\pi(k) + 1)$ of $\pi$.

 This leads us to the following definition:

\begin{definition}\label{def:RPA}
Suppose $\pi \in S_{n}$ has an adjacency about a pointer $r^* = (r,r+1)$. 
\begin{enumerate}
    \item{The \emph{adjacency reduction of $\pi$ at $r^*$}, denoted  $R_{r^*}(\pi)$, is the following element of $S_{n-1}$: Fix $K$ for which $\pi(K) = r + 1$.  Then define
    \[\begin{cases}
      R_{r^*}(\pi)(k) = \pi(k) & \text{ if }\pi(k) \leq r \text{ and } k < K\\
      R_{r^*}(\pi)(k) = \pi(k) - 1 & \text{ if } \pi(k) > r \text{ and } k < K\\
      R_{r^*}(\pi)(k - 1) = \pi(k) & \text{ if } \pi(k) \leq r \text{ and } k > K\\
      R_{r^*}(\pi)(k - 1) = \pi(k) - 1 & \text{ if } \pi(k) > r \text{ and } k > K
      \end{cases}\]} 
    \item{The \emph{reduced pointer word of $\pi$ at $r^*$}, denoted $RW_{r^*}(\pi)$, is defined as follows: Fix $K$ for which $\pi(K) = r+1$. Then for $0\le j\le n-2$
    $RW_{r^*}(\pi)(2j+1) = L(x_j)$ and $RW_{r^*}(\pi)(2j+2) = R(x_j)$ where 
    \[ x_j = \left\{\begin{array}{ll}
                        \pi(j+1) & \mbox{ if } j\neq K-1 \mbox{ and } \pi(j+1)<\pi(K)\\
                        \pi(j+1) - 1 & \mbox{ if } j\neq K-1 \mbox{ and } \pi(j+1)>\pi(K)\\
                        \pi(j+1) & \mbox{otherwise}.
                    \end{array}
                    \right.
    \]
    }
\end{enumerate}
\end{definition}

\begin{example}\label{ex:RPA1} For the permutation $\pi = \lbrack 8\ 5\ 2\ 4\ 6\ 7\ 3\ 1 \rbrack$ consider the pointer $6^* = (6,7)$. In the notation of Definition \ref{def:RPA}, $K = 6$. Now
\[
  W(\pi) = \lbrack 
             (7,8) \ (4,5)\; (5,6)\ (1,2)\; (2,3) \ (3,4)\; (4,5) \ (5,6)\; {\color{red}(6,7)\ (6,7)}\; (7,8) \ (2,3)\; (3,4) \ (1,2)  
            \rbrack
\]
Then we have
\[
  RW_{6^*}(\pi) = \lbrack 
             (6,7) \ (4,5)\; (5,6)\ (1,2)\; (2,3) \ (3,4)\; (4,5) \ (5,6)\; (6,7) \ (2,3)\; (3,4) \ (1,2)  
            \rbrack
\]
\end{example}

The adjacency reduction operation induces a natural bijective mapping from the pointers of $\pi$, excluding pointer $p$, to the pointers of $R_p(\pi)$. 
This correspondence takes the left and right pointers of $\pi(k)$, $k < K - 1$ to the left and right pointers of $R_p(\pi)(k)$, and takes the left and right pointers of $\pi(k)$, $k > K$, to the left and right pointers of $R_p(\pi)(k-1)$. The left pointer of $\pi(K - 1)$ is taken to the left pointer of $R_p(\pi)(K-1)$, and the right pointer of $\pi(K)$ is taken to the right pointer of $R_p(\pi)(K-1)$.

\begin{repexample}{ex:pointers} 
Taking $\pi^{\prime}$ as in example \ref{ex:pointers}, we have adjacencies around the pointers $5^* = (5,6)$ and $3^* = (3,4)$. If we reduce the permutation $\pi^{\prime}$ at $3^* = (3,4)$, the resulting permutation, $R_{(3,4)}(\pi^{\prime})$, is:
\[
[\ 1_{(1, 2)}\quad _{(1, 2)}2_{(2, 3)} \quad _{(3, 4)}4_{(4, 5)}\quad _{(4, 5)}5\quad _{(2, 3)}3_{(3, 4)}\ ]
\]
Of the pointers that are not preserved under the induced correspondence, pointer $4^* = (4,5)$ of $\pi^{\prime}$ is sent to pointer $3^*=(3,4)$ of $R_{(3,4)}(\pi^{\prime})$, and pointer $5^*=(5,6)$ of $\pi^{\prime}$ is sent to pointer $4^*=(4,5)$ of $R_{(3,4)}(\pi^{\prime})$.
\end{repexample}

\begin{definition}
Let $\pi \in S_n$ be given. Let $p$ and $q$ be distinct pointers in $\pi$. We say that $(p, q)$ is a \emph{valid pointer context} if the pointers $p$ and $q$ appear in the order $p\ldots q\ldots p\ldots q$, or in the order $q\dots p\dots q\dots p$, in the pointer word $W(\pi)$ of $\pi$. 
\end{definition}

\begin{repexample}{ex:pointers} 
Taking $\pi$ as in Example \ref{ex:pointers}, the pointers $5^* = (5, 6)$ and $3^* = (3, 4)$ appear in the desired order, so $(5^*=(5,6), 3^*=(3, 4))$ is a valid pointer context. \end{repexample}

\begin{definition}
Let $\pi \in S_n$ be given. Let $p$ and $q$ be distinct pointers in $\pi$ that form a valid pointer context $(p, q)$. Define the \emph{context-directed swap} (or \textbf{cds}) on the permutation $\pi$ at pointers $p,q$ to be the permutation resulting from swapping the two individual blocks of entries of $\pi$ appearing between the pointers $p$ and $q$. Let \textbf{cds}$_{p,q}(\pi) \in S_n$ be the permutation resulting from applying \textbf{cds} for valid pointer context $(p,q)$ to $\pi$.    
\end{definition}

\begin{repexample}{ex:pointers} 
Take permutation $\pi$ again as in Example \ref{ex:pointers}, and take the pointers $5^* = (5, 6)$ and $3^* = (3, 4)$ that form a valid pointer context. The block ``$6 \ 3$" appears between the first $(5, 6) \ (3, 4)$ pointer pair and the block ``$1\ 2$" appears between the second occurrence. Applying \textbf{cds} for the valid pointer context $(5^*,3^*)$ swaps the positions of these two blocks, yielding:
\[
\pi^{\prime} = [\ 1_{(1, 2)}\quad _{(1, 2)}2_{(2, 3)} \quad _{(4, 5)}5_{(5, 6)}\quad _{(5, 6)}6\quad _{(2, 3)}3_{(3, 4)} \quad _{(3, 4)}4_{(4, 5)} \ ]
\]
\end{repexample}

Observe that \textbf{cds} always introduces adjacencies about the pointers $p$ and $q$; in Example \ref{ex:pointers}, $\pi^{\prime} (3)=5, \pi^{\prime} (3+1)=6$, so $5$ and $6$ form an adjacency, and similarly, $\pi^{\prime} (5) = 3, \pi^{\prime} (5+1) = 4$, so $3$ and $4$ form an adjacency.

Since an adjacency about a pointer $p$ guarantees that $p$ will not subsequently appear in a valid pointer context, the adjacency will persist regardless of any subsequent \textbf{cds} operation performed on the permutation.

A \textbf{cds} \emph{fixed point} is a permutation with no valid pointer contexts. Besides the identity permutation $\lbrack 1\ 2\ \cdots \ n\rbrack$, all other \textbf{cds} fixed points in $S_n$ are of the form 
\[
[\ (k+1) \ (k+2) \ (k+3) \ \ldots \ n \ 1\ 2\ 3\ \ldots \ k \ ]
\]
for some $k \in \mathbb Z_n$ \cite{ADAMYK}.

\subsection*{Strategic Pile}

We now introduce a set associated to each permutation that characterizes its sortability and fixed points under \textbf{cds}.

Fix arbitrary $\pi \in S_n$, say $\pi = [\ a_1 \ a_2 \ a_3 \ \ldots \ a_n \ ]$. For the purposes of the upcoming definition of the strategic pile, view all numbers as elements of $\mathbb Z$, and let parentheses denote the usual cycle notation for permutations. Define
\begin{align*}
   X_n &:= (0\ 1\ 2\ \ldots \ n)\\
   Y_{\pi} &:= (a_n \ a_{n-1} \ a_{n - 2} \ \ldots \ a_1 \ 0)\\
   C_{\pi} &:= Y_{\pi} \circ X_{n},
\end{align*}
the function composition of permutations $Y_{\pi}$ and $X_n$.

\example\label{ex:strpile}
{
 For $\pi = \lbrack 8\ 1\ 5\ 2\ 4\ 3\ 7\ 6 \rbrack$ we find:
 \begin{align*}
   X_8 &:= (0\ 1\ 2\ 3\ 4\ 5\ 6\ 7\ 8)\\
   Y_{\pi} &:= (6 \ 7 \ 3 \ 4\ 2\ 5\ 1\ 8\ 0)\\
   C_{\pi} &:= Y_{\pi} \circ X_{8}\\
     & = (0\ 8\ 6\ 3\ 2\ 4\ 1\ 5\ 7)
\end{align*}
}

\definition{Consider a permutation $\pi \in S_n$. The \emph{strategic pile} of $\pi$, denoted $\text{SP}(\pi)$, is the set of numbers appearing after $n$ and before $0$ in the cycle of $C_\pi$ containing $0$ and $n$. If $0$ and $n$ do not appear in the same cycle, then $\text{SP}(\pi) = \emptyset$.}

\begin{repexample}{ex:strpile}
For $\pi = \lbrack 8\ 1\ 5\ 2\ 4\ 3\ 7\ 6 \rbrack$ the strategic pile is $\text{SP}(\pi) = \{6,\; 3,\; 2,\; 4,\; 1,\; 5,\; 7\}$
\end{repexample}

The strategic pile completely characterizes when a permutation is sortable under \textbf{cds}.

\begin{theorem}[\cite{ADAMYK}, Theorem 2.18]\label{thm:sortable}
A permutation $\pi \in S_n$ is \textbf{cds} sortable if and only if $\text{SP}(\pi) = \emptyset$.
\end{theorem}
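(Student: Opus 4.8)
The plan is to reformulate both conditions in terms of the cycle structure of $C_\pi$ and to isolate a single invariant of the \textbf{cds} operation. First I would record the following dictionary, obtained by directly unwinding $C_\pi = Y_\pi \circ X_n$: for $1 \le i \le n-1$, the point $i$ is fixed by $C_\pi$ exactly when $\pi$ has an adjacency about the pointer $i^*$; moreover $C_\pi(n) = Y_\pi(X_n(n)) = Y_\pi(0) = \pi(n) \neq 0$, so whenever $0$ and $n$ lie in a common cycle of $C_\pi$ there is at least one point strictly between $n$ and $0$ in that cycle. Consequently $\text{SP}(\pi) = \emptyset$ if and only if $0$ and $n$ lie in \emph{distinct} cycles of $C_\pi$. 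I would then observe, using the fixed-point classification already recorded in the excerpt, that the identity has $0$ and $n$ in different cycles (both are singletons), whereas each rotation $[(k{+}1)\cdots n\,1\cdots k]$ has $C_\pi$ equal to the $3$-cycle $(0\; n\; k)$ together with fixed points, so that $0$ and $n$ are co-cyclic and $\text{SP} = \{k\} \neq \emptyset$.

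The heart of the argument is the invariance lemma: if $(p,q)$ is a valid pointer context then $0$ and $n$ are co-cyclic in $C_\pi$ if and only if they are co-cyclic in $C_{\text{cds}_{p,q}(\pi)}$. To prove it I would compute the effect of the block interchange on $Y_\pi$. Writing $\pi = [\,\alpha\; X\; \beta\; Y\; \gamma\,]$ with the swapped blocks $X, Y$, only the four predecessor relations at the block boundaries change, and a short computation gives $Y_{\text{cds}(\pi)} = Y_\pi \circ \tau$ for an explicit product of two transpositions $\tau = (x_1\; y_1)(v\; z)$ on the boundary entries; hence $C_{\text{cds}(\pi)} = C_\pi \circ (X_n^{-1}\tau X_n)$, again a product of two transpositions $(x_1{-}1\;\; y_1{-}1)(v{-}1\;\; z{-}1)$. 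Since multiplying a permutation by a transposition splits or merges cycles, this surgery changes the number of cycles of $C_\pi$ by $0$ or $\pm 2$. I would show that the valid-context hypothesis forces the $+2$ case: \textbf{cds} introduces adjacencies about $p = r^*$ and $q = s^*$, so $r$ and $s$ become fixed points of $C_{\text{cds}(\pi)}$, and the two transpositions peel $r$ and $s$ off as singletons while reassembling the remaining points.

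The main obstacle is exactly this last point: verifying that the two-transposition surgery never separates $0$ from $n$ when they start together (and never joins them when they start apart). This is where the combinatorics of the valid pointer context must be used in full --- I would translate the order $p\ldots q\ldots p\ldots q$ in $W(\pi)$ into the precise positions of the four transposition points $x_1{-}1,\,y_1{-}1,\,v{-}1,\,z{-}1$ relative to the cycle(s) carrying $0$ and $n$, and check case by case that the extracted points $r, s$ lie off the arc joining $0$ to $n$, so that connectivity between $0$ and $n$ is unchanged. The example $\pi = [2\,4\,1\,3]$, where $C_\pi = (0\;4\;3\;2\;1)$ and every valid context extracts interior pile elements while leaving $0$ and $n$ co-cyclic, is the model case to generalize.

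Finally I would assemble the two directions, using that every sequence of \textbf{cds} operations terminates: by the excerpt, each \textbf{cds} creates adjacencies that persist, so the number of non-adjacencies strictly decreases and a fixed point is always reached. For $(\Leftarrow)$, if $\text{SP}(\pi) = \emptyset$ then along any maximal \textbf{cds} sequence the invariance lemma keeps $0$ and $n$ in different cycles, so the terminal fixed point has empty strategic pile; by the classification it must be the identity, whence $\pi$ is sortable. For $(\Rightarrow)$, if $\pi$ is sortable then some sequence reaches the identity, whose associated $C$ has $0$ and $n$ apart, so by invariance they were already apart in $C_\pi$, i.e.\ $\text{SP}(\pi) = \emptyset$. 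As a by-product, the invariant shows that sortability is independent of the order in which \textbf{cds} moves are applied.
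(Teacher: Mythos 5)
The paper itself gives no proof of this statement --- it is imported verbatim from \cite{ADAMYK} (Theorem 2.18) --- so your attempt must be judged as a self-contained reconstruction. Much of your setup is correct and verifiable: $\text{SP}(\pi)=\emptyset$ if and only if $0$ and $n$ lie in distinct cycles of $C_\pi$ (since $C_\pi(n)=\pi(n)\neq 0$); $C_\pi(i)=i$ for $1\le i\le n-1$ exactly when $\pi$ has an adjacency about $i^*$; the identity yields the trivial $C_\pi$ while the rotation $[\,(k{+}1)\,\cdots\,n\;1\,\cdots\,k\,]$ yields $(0\;n\;k)$ together with fixed points; and the block interchange really does satisfy $Y_{\mathbf{cds}(\pi)}=Y_\pi\circ(x_1\;y_1)(v\;z)$, hence $C_{\mathbf{cds}(\pi)}=C_\pi\circ\tau'$ with $\tau'$ a product of two disjoint transpositions. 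Granting the invariance lemma, your assembly of both directions is also sound.

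The genuine gap is that the invariance lemma --- which is the entire mathematical content of the theorem --- is never proved: you explicitly defer it to an unexecuted case analysis, and the success criterion you state for that analysis is not the right one. Each of the two transpositions either splits a cycle of $C_\pi$ in two or merges two cycles, so the dangerous scenarios are (a) a split that cuts the cycle through $0$ and $n$ into two non-singleton pieces, separating them, and (b) a merge that glues the cycle of $0$ to the cycle of $n$. Checking that ``the extracted points $r,s$ lie off the arc joining $0$ to $n$'' is automatic ($r,s\in\{1,\dots,n-1\}$ can never equal $0$ or $n$) and rules out neither scenario; note also that the four transposition points themselves can include $0$, e.g.\ $\pi=[6\;3\;5\;1\;2\;4]$ with context $(5^*,3^*)$ gives $\tau'=(0\;5)(3\;4)$, so one cannot argue that $0$ and $n$ are off the support of the surgery. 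What actually has to be shown is that a valid pointer context forces both transpositions to act as splits, i.e.\ that the cycle count increases by exactly $2$; since $r$ and $s$ become fixed points of $C_{\mathbf{cds}(\pi)}$ (new adjacencies) but are not fixed in $C_\pi$, the two splits are then forced to be precisely the singleton extractions of $\{r\}$ and $\{s\}$, and co-cyclicity of every other pair --- in particular of $0$ and $n$ --- is preserved. The only alternative consistent with $r,s$ becoming fixed is one split plus one merge, which can occur only when $r$ and $s$ form a $2$-cycle $(r\;s)$ of $C_\pi$, and in that case the accompanying merge could join $0$ to $n$; ruling it out requires showing that a $2$-cycle $(r\;s)$ forces the pointers $r^*,s^*$ to nest rather than interleave (as happens in $\pi=[3\;2\;1\;4]$, where $C_\pi=(0\;2)(1\;3)$ and $1^*,3^*$ appear in the order $3^*\,1^*\,1^*\,3^*$). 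That positional analysis, which is where validity of the context is genuinely used, is exactly the content of the cited result in \cite{ADAMYK} and of Christie's cycle-counting argument; without it your proposal is a correct skeleton around an unproven core.
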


In the case that a permutation is not sortable, the strategic pile computes precisely which fixed points are reachable under \textbf{cds}.

\begin{theorem}[\cite{ADAMYK}, Theorem 2.22]\label{thm:fixedpt}
Let $\pi \in S_n$ be given. Then
\[\lbrack k\; k+ 1\; ... \; n \; 1\;2 \; ... \; k - 1\rbrack\]
is a fixed point of $\pi$ if and only if $k - 1\in \text{SP}(\pi)$.
\end{theorem}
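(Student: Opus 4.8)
The plan is to prove the equivalence by identifying the set of fixed points reachable from $\pi$ with the arc of the cycle of $C_\pi$ running from $n$ to $0$, and to do so by induction on the number of \textbf{cds} moves in which $\pi$ reaches a fixed point. Write $\rho_k=[\,k\ (k{+}1)\ \cdots\ n\ 1\ 2\ \cdots\ (k{-}1)\,]$ for the fixed point under consideration and let $R(\pi)\subseteq\{1,\dots,n-1\}$ be the set of $k-1$ for which $\rho_k$ is reachable from $\pi$; the theorem asserts $R(\pi)=\text{SP}(\pi)$. (The identity $\rho_1$ is the degenerate case: it is reachable exactly when $\pi$ is sortable, which by Theorem~\ref{thm:sortable} is the case $\text{SP}(\pi)=\emptyset$, consistent with the fact that $0\notin\text{SP}(\pi)$ for every $\pi$.) The induction is well founded because each \textbf{cds} introduces a permanent adjacency about each of its two pointers, so every successor of $\pi$ is strictly closer to a fixed point.

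I would first settle the base case by computing $C_{\rho_k}=Y_{\rho_k}\circ X_n$ directly from the definitions. A short calculation shows that this permutation fixes every element of $\{0,1,\dots,n\}$ except $0$, $n$, and $k-1$, on which it acts as the single $3$-cycle $(0\ n\ k{-}1)$. Hence the cycle of $C_{\rho_k}$ containing $0$ and $n$ is exactly this $3$-cycle, its arc from $n$ to $0$ reads $n\to k{-}1\to 0$, and therefore $\text{SP}(\rho_k)=\{k-1\}$. Since a fixed point has no valid pointer context, the only fixed point reachable from $\rho_k$ is $\rho_k$ itself, so $R(\rho_k)=\{k-1\}=\text{SP}(\rho_k)$, anchoring the induction.

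For the inductive step assume $\pi$ is not a fixed point. Every fixed point reachable from $\pi$ is reached through some first move, so $R(\pi)=\bigcup_{(p,q)}R(\pi')$ where $\pi'=\mathbf{cds}_{p,q}(\pi)$ ranges over the valid contexts; by the induction hypothesis this equals $\bigcup_{(p,q)}\text{SP}(\pi')$. The theorem thus reduces to the single identity
\[
\text{SP}(\pi)=\bigcup_{(p,q)\ \text{valid}}\text{SP}\bigl(\mathbf{cds}_{p,q}(\pi)\bigr),
\]
which I would split into two claims: (i) $\text{SP}(\mathbf{cds}_{p,q}(\pi))\subseteq\text{SP}(\pi)$ for every valid context, and (ii) each $m\in\text{SP}(\pi)$ lies in $\text{SP}(\mathbf{cds}_{p,q}(\pi))$ for at least one valid context $(p,q)$. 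To approach these I would establish the transformation law for $C_\pi$ under a single \textbf{cds}: since $X_n$ does not depend on $\pi$, writing $\pi'=\mathbf{cds}_{p,q}(\pi)$ gives $C_{\pi'}=\Delta\circ C_\pi$ with $\Delta=Y_{\pi'}Y_\pi^{-1}$, and because \textbf{cds} only interchanges two blocks, $\Delta$ is supported on the boundary values of those blocks. Matching this against the description of a minimal block interchange in \cite{Christie}, $\Delta$ is a product of two transpositions determined by the pointers $p=(r,r{+}1)$ and $q=(s,s{+}1)$, each raising the number of cycles of $C_\pi$ by one. Tracking the action of $\Delta$ on the $0$-$n$ cycle gives (i): the arc from $n$ to $0$ can only lose entries, so no new element enters the strategic pile. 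For (ii), given a target $m\in\text{SP}(\pi)$ I would produce a valid context whose two transpositions excise a portion of the arc not containing $m$.

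I expect claim (ii) to be the main obstacle. Monotonicity (i) follows once the transformation law is in hand, but (ii) is the quantitative core of reachability: one must control precisely which entries a given \textbf{cds} removes from the arc and show that \emph{every} strategic-pile element can be protected by a suitable choice of context. This requires a careful dictionary between valid pointer contexts and transpositions of the $0$-$n$ cycle — a case analysis of how the four pointer occurrences of a context are situated relative to the arc — together with the constraint that each \textbf{cds} raises the cycle count of $C_\pi$ by exactly two, which bounds how much of the arc any single move can destroy and thereby guarantees that a protecting context exists.
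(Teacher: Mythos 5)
The paper itself does not prove this statement; it is imported verbatim from \cite{ADAMYK} (Theorem 2.22 there), so your attempt has to stand on its own. Its skeleton is sound: the induction on the number of \textbf{cds} moves is well founded, your base-case computation is correct ($C_{\rho_k}$ really is the $3$-cycle $(0\ n\ k{-}1)$, so $\SP(\rho_k)=\{k-1\}$, and the identity permutation is correctly set aside as the degenerate case via Theorem \ref{thm:sortable}), and reducing the inductive step to the identity $\SP(\pi)=\bigcup_{(p,q)}\SP(\mathbf{cds}_{p,q}(\pi))$, i.e.\ to monotonicity (i) plus retention (ii), is exactly the right decomposition.

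The genuine gap is claim (ii), which you name as "the main obstacle" but never prove: it is precisely the Strategic Pile Retention Theorem of \cite{ADAMYK} (Theorem 2.21 there, restated as Theorem \ref{thm:strpileretention} in this paper), and it is the entire content of the result you are trying to establish. Your proposed mechanism --- each \textbf{cds} raises the cycle count of $C_\pi$ by exactly two, "which bounds how much of the arc any single move can destroy and thereby guarantees that a protecting context exists" --- fails twice over. First, a $+2$ change in cycle count bounds the number of pieces a split produces, not their sizes: a transposition splitting the $0$--$n$ cycle can excise an arbitrarily long arc, so the cycle-count constraint alone bounds nothing; the fact that at most two pile elements leave per move is a separate structural result (\cite{ADAMYK}, Corollary 2.16), not a consequence of the count. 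Second, and more fundamentally, even granting a per-move bound, no counting argument yields, for a \emph{given} $m\in\SP(\pi)$, a valid context that avoids removing $m$: a priori every valid context could have $m$ among its casualties, and ruling that out requires constructing an explicit protecting context from the position of $m$ in the cycle of $C_\pi$ relative to the pointer values --- which is how the cited proof actually proceeds. Finally, note that retention as stated holds only when $\vert\SP(\pi)\vert>1$; in the case $\vert\SP(\pi)\vert=1$ your union identity is still true, but by a different argument (a child with empty pile would be \textbf{cds}-sortable by Theorem \ref{thm:sortable}, forcing $\pi$ to be sortable, a contradiction), a case your induction never treats.
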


In this paper we focus on permutations which have the largest possible strategic pile.

\begin{theorem}[\cite{GAETZ} Lemma 3.1, Corollary 3.2]\label{thm:strpilebounds}
Let $\pi \in S_{n}$. If $n$ is even, then $|\text{SP}(\pi)| \leq n - 1$. If $n$ is odd, then $|SP(\pi)| \leq n - 2$.
\end{theorem}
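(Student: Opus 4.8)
The plan is to prove the bound through a parity (sign) argument applied to the permutation $C_\pi$ of the $(n+1)$-element set $\{0,1,\dots,n\}$. First I would dispose of the even case and the trivial direction. By the definition of the strategic pile, $\mathrm{SP}(\pi)$ consists of those numbers lying strictly between $n$ and $0$ within a single cycle of $C_\pi$; in particular $0$ and $n$ are themselves excluded, so $\mathrm{SP}(\pi) \subseteq \{1,2,\dots,n-1\}$ and therefore $|\mathrm{SP}(\pi)| \le n-1$ with no assumption on the parity of $n$. This already establishes the even case, so the entire content of the theorem reduces to sharpening this bound by one when $n$ is odd.

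The heart of the argument is the observation that $C_\pi$ has a fixed sign. Both $X_n = (0\ 1\ \cdots\ n)$ and $Y_\pi = (a_n\ a_{n-1}\ \cdots\ a_1\ 0)$ are $(n+1)$-cycles: $X_n$ visibly cycles all of $\{0,1,\dots,n\}$, and since $a_1,\dots,a_n$ is a permutation of $\{1,\dots,n\}$, the word $Y_\pi$ lists all $n+1$ elements exactly once as well. An $(n+1)$-cycle is a product of $n$ transpositions, so $\mathrm{sgn}(X_n) = \mathrm{sgn}(Y_\pi) = (-1)^n$. Hence $\mathrm{sgn}(C_\pi) = \mathrm{sgn}(Y_\pi)\,\mathrm{sgn}(X_n) = \bigl((-1)^n\bigr)^2 = 1$, so $C_\pi$ is an even permutation for every $\pi \in S_n$.

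For the odd case I would then argue by contradiction. Suppose $n$ is odd but $|\mathrm{SP}(\pi)| = n-1$, forcing $\mathrm{SP}(\pi) = \{1,\dots,n-1\}$. Then every element strictly between $0$ and $n$ lies in the cycle of $C_\pi$ through $0$ and $n$, and together with $0$ and $n$ this cycle exhausts all of $\{0,1,\dots,n\}$; thus $C_\pi$ is a single $(n+1)$-cycle. But for odd $n$ such a cycle has sign $(-1)^n = -1$, contradicting the evenness of $C_\pi$ established above. Therefore $|\mathrm{SP}(\pi)| \le n-2$ when $n$ is odd. I expect the main obstacle to be conceptual rather than computational: recognizing that the relevant invariant is $\mathrm{sgn}(C_\pi)$, that $X_n$ and $Y_\pi$ are genuinely $(n+1)$-cycles (acting on $\{0,\dots,n\}$ rather than $n$-cycles on $\{1,\dots,n\}$), and that ``maximal strategic pile'' translates precisely into ``$C_\pi$ is a single full-length cycle.'' Once these identifications are made, the parity bookkeeping is routine.
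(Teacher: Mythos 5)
Your argument is correct, but note that the paper you were asked to match contains no proof of this statement at all: it is imported verbatim from the reference [GAETZ] (Lemma 3.1, Corollary 3.2), so there is no internal proof to compare against. On its own merits your proposal is sound and self-contained. The even case is indeed trivial, since $\mathrm{SP}(\pi)\subseteq\{1,\dots,n-1\}$ by definition. The key observation that $C_\pi = Y_\pi\circ X_n$ is a product of two $(n+1)$-cycles on $\{0,1,\dots,n\}$, hence always an even permutation, is exactly the right invariant; and your translation of $|\mathrm{SP}(\pi)|=n-1$ into ``$C_\pi$ is a single $(n+1)$-cycle'' is valid, since a pile of size $n-1$ forces every element of $\{1,\dots,n-1\}$ to lie in the cycle through $0$ and $n$. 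The sign bookkeeping ($\mathrm{sgn}$ of an $(n+1)$-cycle is $(-1)^n$, which is $-1$ for odd $n$, contradicting evenness) is correct. This parity argument is, to the best of my knowledge, essentially the argument used in the cited source, so you have in effect reconstructed the omitted proof rather than found a genuinely different route.
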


We now have a way to make precise what we mean by a permutation having the largest possible strategic pile.

\definition\label{def:maxpile}{Let $\pi \in S_n$. We say that $\pi$ has a \emph{maximal strategic pile} if $n$ is even and $|\text{SP}(\pi)| = n - 1$, or $n$ is odd and $|\text{SP}(\pi)| = n - 2$.}

\begin{repexample}{ex:strpile}
$\pi = \lbrack 8\ 1\ 5\ 2\ 4\ 3\ 7\ 6 \rbrack$ has a maximal strategic pile. The fixed points reachable by repeated applications of \textbf{cds} are $\lbrack 8\ 1\ 2\ 3\ 4\ 5\ 6\ 7\rbrack$,
$\lbrack 7\ 8\ 1\ 2\ 3\ 4\ 5\ 6 \rbrack$,
$\lbrack 6\ 7\ 8\ 1\ 2\ 3\ 4\ 5 \rbrack$,
$\lbrack 5\ 6\ 7\ 8\ 1\ 2\ 3\ 4 \rbrack$,
$\lbrack 4\ 5\ 6\ 7\ 8\ 1\ 2\ 3 \rbrack$,
$\lbrack 3\ 4\ 5\ 6\ 7\ 8\ 1\ 2 \rbrack$, and 
$\lbrack 2\ 3\ 4\ 5\ 6\ 7\ 8\ 1\rbrack$.
\end{repexample}

The following proposition collects several properties of permutations with maximal strategic pile.

\begin{proposition}\label{prop:maxpileprops} If $\pi \in S_n$ have maximal strategic pile, then the following are true.
\begin{enumerate}
    \item Let $n$ be even. If $\pi(k) = n$, then $\pi(k + 1) = 1$.
    \item If $n$ is even, then $\pi$ has no adjacencies. 
    \item If $n$ is odd, $\pi$ has precisely one adjacency.
    \item Let $n$ be even. An application of \textbf{cds} to $\pi$ introduces exactly two adjacencies, and removes exactly two elements from the strategic pile. 
    \item Let $n$ be even. If one performs \textbf{cds} on $\pi$, after reducing the resulting adjacencies we have a new permutation of maximal strategic pile in $S_{n - 2}$.
\end{enumerate}
\end{proposition}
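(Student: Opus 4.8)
The plan is to push everything through the auxiliary permutation $C_\pi = Y_\pi\circ X_n$ of $\{0,1,\dots,n\}$, after first recording two elementary facts. Writing the one-line notation as a cyclic word $\sigma=(0\ \pi(1)\ \pi(2)\ \cdots\ \pi(n))$ on $\{0,\dots,n\}$, a direct check from the cycle presentations of $X_n$ and $Y_\pi$ shows that $C_\pi(w)$ equals the \emph{predecessor} of $w+1$ in $\sigma$, with indices read modulo $n+1$. From this the crucial dictionary is immediate: for $r\in\{1,\dots,n-1\}$, $\pi$ has an adjacency about the pointer $r^*$ if and only if $C_\pi(r)=r$, since $\pi(j)=r,\ \pi(j+1)=r+1$ says exactly that $r$ is the predecessor of $r+1$ in $\sigma$. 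I also record that $\text{SP}(\pi)\subseteq\{1,\dots,n-1\}$, as its members are by definition interior to the cycle of $C_\pi$ through $0$ and $n$.

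I would next read the cycle type of $C_\pi$ off of maximality. If $n$ is even and $\abs{\text{SP}(\pi)}=n-1$, then $\text{SP}(\pi)=\{1,\dots,n-1\}$, so the cycle of $C_\pi$ through $0$ and $n$ contains all $n+1$ labels; thus $C_\pi$ is a single $(n+1)$-cycle, and the arc from $n$ to $0$ being exhausted forces $C_\pi(0)=n$. A single $(n+1)$-cycle has no fixed points, so by the dictionary $\pi$ has no adjacencies, giving (2); and $C_\pi(0)=Y_\pi(1)$ unwinds to the statement that $C_\pi(0)=n$ holds exactly when $\pi(k+1)=1$ for the position $k$ with $\pi(k)=n$ (the degenerate $\pi(n)=n$ is impossible, as it would fix $n$ under $C_\pi$), giving (1). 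If $n$ is odd and $\abs{\text{SP}(\pi)}=n-2$, exactly one label of $\{1,\dots,n-1\}$ lies off the arc from $n$ to $0$; to force it to be a fixed point I would invoke a parity count: $X_n$ and $Y_\pi$ are $(n+1)$-cycles, so $C_\pi$ is an even permutation of $n+1$ letters and its number of cycles has the parity of $n+1$. This excludes the cycle through $0,n$ from having size $n+1$, leaving exactly one $n$-cycle through $0,n$ (again with $C_\pi(0)=n$) together with one fixed point in $\{1,\dots,n-1\}$, i.e.\ precisely one adjacency, which is (3).

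For (4) and (5) the plan is to compute the effect of a single \textbf{cds} on $C_\pi$ straight from the predecessor description. Write the valid context as $p=r^*$, $q=s^*$. The swap carries $\sigma$ to a word $\sigma'$ by interchanging two blocks $A$ and $B$ and fixing the remaining entries, so the predecessor function changes only at the (at most four) entries that begin a block; the defining feature of a valid context is that these boundary entries are arranged so that in $\sigma'$ the letter $r$ immediately precedes $r+1$ and $s$ immediately precedes $s+1$, which is precisely why \textbf{cds} creates adjacencies about $p$ and $q$. Tracking the boundary changes, I expect $C_{\pi'}(r)=r$ and $C_{\pi'}(s)=s$, while at the two remaining altered points the cycle is short-circuited past $r$ and $s$; equivalently $C_{\pi'}$ is $C_\pi$ with $r$ and $s$ excised from their cycle and made into fixed points. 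Because $C_\pi$ is a single $(n+1)$-cycle and $r,s\notin\{0,n\}$, this leaves one $(n-1)$-cycle on $\{0,\dots,n\}\setminus\{r,s\}$ together with the fixed points $r,s$, and the edge $0\to n$ is untouched. Hence $\pi'$ has exactly the two adjacencies about $r^*$ and $s^*$, and $\text{SP}(\pi')=\{1,\dots,n-1\}\setminus\{r,s\}$ has size $n-3$, which is (4). For (5) I would check that an adjacency reduction $R_{r^*}$ from Definition \ref{def:RPA} simply deletes the fixed point $r$ of $C$ and shifts the larger labels down by one, leaving the arc from $n$ to $0$ intact; reducing both adjacencies of $\pi'$ then yields an element of $S_{n-2}$ whose associated permutation is a single $(n-1)$-cycle through $0$ and $n-2$, so its strategic pile has size $(n-2)-1$ and is maximal by Definition \ref{def:maxpile}.

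The main obstacle I anticipate is the bookkeeping in (4) and (5): one must enumerate the interleavings $p\ldots q\ldots p\ldots q$ and $q\ldots p\ldots q\ldots p$ together with the two ways each pointer's two occurrences split into a left and a right pointer, identify the pieces prefix$/A/$mid$/B/$suffix in each case, and check that the four modified boundary entries are genuinely distinct, so that ``excising $r$ and $s$'' shortens the long cycle rather than severing it into several pieces. Here the no-adjacency property from (2) is exactly what rules out the degeneracies: for instance it forbids a modified boundary label from already equaling $r$ or $s$, and it is what keeps $0$ from becoming an accidental fixed point (as it would if $r$ or $s$ were the successor of $0$ in $C_\pi$). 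The remaining lemma, that adjacency reduction corresponds to deleting a fixed point of $C$, is routine from the predecessor description but should be isolated and proved before it is used in (5).
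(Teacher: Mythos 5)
Your proposal is correct in outline but takes a genuinely different, more self-contained route than the paper. The paper proves only item (1) directly (via $C_\pi(0)=n$, essentially as you do) and imports the rest: items (2) and (3) are quoted from \cite{GAETZ} (Lemma 3.11), and item (4) from \cite{ADAMYK} (Lemma 2.7 and Corollary 2.16), with (5) observed to be a corollary of (4). You instead derive everything from one dictionary --- $\pi$ has an adjacency about $r^*$ if and only if $C_\pi(r)=r$ --- which is valid because $C_\pi(w)$ is the predecessor of $w+1$ in the cyclic word $0\,\pi(1)\,\pi(2)\cdots\pi(n)$. Your arguments for (1) and (2) are complete, and your parity proof of (3) (both $X_n$ and $Y_\pi$ are $(n+1)$-cycles, so $C_\pi$ is even and its cycle count is congruent to $n+1$ modulo $2$, which rules out a single $(n+1)$-cycle when $n$ is odd and forces the one leftover element of $\{1,\dots,n-1\}$ to be a fixed point) is a clean substitute for the citation; what this buys is independence from the prior literature, at the cost of having to prove in-house what the paper simply quotes.

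The caveat concerns (4) and (5). Your excision claim --- that $C_{\mathbf{cds}_{r^*,s^*}(\pi)}$ equals $C_\pi$ with $r$ and $s$ deleted from their cycle and turned into fixed points --- is true (it is, in substance, the content of the \cite{ADAMYK} results the paper cites), but in your write-up it is announced (``I expect'') rather than proved, and the case analysis over the two interleavings, the left/right placements of each pointer's two occurrences, and the empty-block degeneracies (e.g.\ when $C_\pi(r)=s$) is exactly where the substance of item (4) lives; the same applies to the small lemma that adjacency reduction deletes a fixed point of $C_\pi$. Both facts do follow from the predecessor computation you outline: the only entries whose predecessors change are the first entries of the swapped and intervening blocks, yielding $C_{\pi'}(r)=r$, $C_{\pi'}(s)=s$, and short-circuits sending $C_\pi^{-1}(r)$ and $C_\pi^{-1}(s)$ past $r$ and $s$. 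Note also that your worry about excision ``severing'' the long cycle is unfounded: deleting points from a cycle always leaves a single cycle on the remaining points, so in the maximal-pile case one automatically gets an $(n-1)$-cycle through $0$ and $n$ plus two fixed points. So the plan is viable and every claim in it is correct, but as written items (4) and (5) are a program rather than a proof; to be self-contained you must actually carry out that bookkeeping, whereas the paper discharges it by citation.
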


\begin{proof}
We first prove item (1). If $\pi\in S_{n}$ has a strategic pile of size $n-1$, then each number in $[n-1]$ must appear between $n$ and $0$ in a cycle of $C_\pi$. Thus, $C_\pi(0)=n$. By the definition of $C_\pi$, we must have that $\pi^{-1}(0)=\pi^{-1}(n)-1$, as desired.
Items (2) and (3) are from \cite{GAETZ}, Lemma 3.11.
Moreover, (5) is a corollary of (4), so we prove (4). Each application of \textbf{cds} introduces at least two adjacencies, and thus removes at least two elements from the strategic pile (\cite{ADAMYK}, Lemma 2.7). However, no more than two elements can be removed from the strategic pile with a single \textbf{cds} move (\cite{ADAMYK}, Corollary 2.16).
\end{proof}

By the preceding proposition, to study permutations with maximal strategic pile, it suffices to study such permutations with even length: Permutations of odd length with maximal strategic pile can be reduced to permutations with even length and maximal strategic pile.

Furthermore, for a permutation $\pi \in S_{2n}$ with maximal strategic pile, entry $2n$ always occurs directly to the left of $1$. Thus we can contract these entries in a way analogous to reductions of adjacencies.

\definition{To each $\pi \in S_{2n}$ with maximal strategic pile, associate a permutation $\pi' \in S_{2n - 1}$, defined as follows: If $m$ is such that $\pi(m) = 2n$, then for all $k < m$ define
\[\pi'(k) = \pi(k)\]
and for all $m \leq k < 2n - 1$ define
\[\pi'(k) = \pi(k + 1)\]
and assign $\pi'$ the same pointers as the corresponding elements in $\pi$, except $1$ has the left pointer $(2n - 1,1)$, and $2n-1$ has the right pointer $(2n-1,1)$. 

$M_{2n,k}$ is the set of all such contractions on permutations of length $2n$ with maximal strategic pile and $k$ valid pointer contexts.
}

\example\label{ex:reduct}{The permutation
$\pi = [2\; 4\; 6\; 1\; 3\; 5]$ 
in $S_6$ has a maximum strategic pile. Also, $\pi$ has $k = \binom{2\cdot 3 - 1}{2}$ pointer pairs that constitute valid pointer contexts. Its contraction to a permutation in $M_{6,k}$ is
$\lbrack 2\;4\;1\;3\;5\rbrack$.
}

\section{A Group Action on $M_{2n,k}$}

A goal of this section is to count, fixing $k$ and $n$, the number of permutations that are of length $2n$ have maximum strategic pile, and exactly $k$ valid pointer contexts. Note that as a consequence of 1) in Proposition \ref{prop:maxpileprops}, $\vert M_{2n,k}\vert$ is the number of permutations $S_{2n}$ with maximum strategic pile and $k$ pairs of pointers that constitute valid pointer contexts. The enumeration method relies on a group action on permutations which preserves membership to the set $M_{2n,k}$, and an application of the orbit-stabilizer theorem.

\definition{Define the map $\phi: \mathbb Z_n \times \mathbb Z_n \times S_n \rightarrow S_n$ by
\[\phi(a,b,\pi)(x) = \pi(x - a) + b\]
where $a,b \in \mathbb Z_n$ and $\pi \in S_n$.
}

For $\pi \in S_n$, this operation has the effect of applying a cyclic shift by $a$ to $\pi$, followed by adding $b$ to each element in $\pi$.

\example\label{ex:phi}{
If we consider
$\pi = [5\; 4 \; 1\; 3 \; 2]$
in $S_5$, then
\[\phi(2,3,\pi) = [3 + 3\; 2 + 3\; 5 + 3\; 4 + 3\; 1 + 3] = [1 \; 5\; 3\; 2\; 4]\]
}

As is exhibited in Example 3.2, the function $\phi$ is composed of two elementary functions: (a) \emph{cyclic shift by } $a \in \mathbb Z_n$ which is defined by
\[C_a: S_n \rightarrow S_n\]
\[\pi \mapsto \pi(x - a)\]
and (b) \emph{translation by} $b \in \mathbb Z_n$ which is defined by
\[T_b: S_n \rightarrow S_n\]
\[\pi \mapsto \pi(x) + b\]
Then we have that
\[\phi(a,b,\pi) = C_a(T_b(\pi)) = T_b(C_a(\pi))\]
for all $a,b \in \mathbb Z_n$, $\pi \in S_n$. Furthermore, note that
\[C_a \circ C_b = C_{a + b}\]
\[T_a \circ T_b = T_{a + b}\]
for all $a,b \in \mathbb Z_n$.

We now show that the action of $\mathbb Z_{2n - 1} \times \mathbb Z_{2n - 1}$ on any element of $M_{2n,k}$ gives an element of $M_{2n,k}$.

\begin{theorem}$\phi$ restricted to $\mathbb Z_{2n - 1} \times \mathbb Z_{2n - 1} \times M_{2n,k}$ has image a subset of $M_{2n,k}$.
\end{theorem}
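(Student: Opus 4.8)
The plan is to use the factorization $\phi(a,b,\pi')=C_a\bigl(T_b(\pi')\bigr)$ recorded in the text, so that it suffices to prove separately that each cyclic shift $C_a$ and each translation $T_b$ carries $M_{2n,k}$ into itself. Membership of $\sigma\in S_{2n-1}$ in $M_{2n,k}$ asserts two things: that $\sigma$ is the contraction of some length-$2n$ permutation of maximal strategic pile, and that $\sigma$ has exactly $k$ valid pointer contexts. I would verify that each of $C_a$ and $T_b$ preserves each of these two properties; preserving both gives the image containment claimed.

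For the pointer contexts I would exploit that, for a contraction $\sigma$, the pointers occurring in $W(\sigma)$ are exactly the full cyclic family $(1,2),(2,3),\dots,(2n-2,2n-1),(2n-1,1)$, by the special assignment in the definition of the contraction. Thus $W(\sigma)$ is a double occurrence word on the $2n-1$ symbols of $\mathbb Z_{2n-1}$, and a pair $(p,q)$ is a valid context precisely when the two occurrences of $p$ and the two occurrences of $q$ interlace. Now $T_b$ fixes every position and relabels each pointer by $(j,j+1)\mapsto(j+b,j+b+1)$, a single cyclic relabeling of the alphabet; and $C_a$ fixes every value and cyclically rotates the length-two position blocks of $W(\sigma)$. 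Interlacing of a pair in a double occurrence word is invariant under both a consistent relabeling of the alphabet and a cyclic rotation of the word, so each generator induces a bijection on the set of valid contexts and in particular preserves their number $k$.

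For maximality I would show that each generator commutes with \textbf{cds} up to the same induced action on pointers: if $(p,q)$ is a valid context of $\sigma$, then applying $T_b$ (respectively $C_a$) and then \textbf{cds} at the relabeled (respectively rotated) context yields the same permutation as applying \textbf{cds}$_{p,q}$ first and the generator afterwards, since \textbf{cds}$_{p,q}$ depends only on the two blocks delimited by $p$ and $q$, which the relabeling or rotation carries along intact. This equivariance transports the entire \textbf{cds} reachability structure, and with it the set of reachable fixed points, bijectively; by Theorem~\ref{thm:fixedpt} the size of the strategic pile equals the number of reachable nontrivial fixed points, so $\abs{SP}$ is unchanged and maximality is preserved. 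Equivalently, one can track the strategic-pile cycle, which admits the commutator form $C_\sigma=\bar\sigma\,\rho^{-1}\bar\sigma^{-1}\rho$ with $\rho$ the successor cycle $X$ and $\bar\sigma$ the extension fixing $0$, and check that the generators conjugate its cyclic refinement, keeping it one full cycle — the characterization of maximal pile that follows from the definition of $SP$ together with Proposition~\ref{prop:maxpileprops}.

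The main obstacle is the bookkeeping forced by the wrap-around pointer $(2n-1,1)$ together with the distinguished roles of $0$ and $2n$ in the strategic-pile cycle. Values are taken modulo $2n-1$, whereas $C_\pi$ lives on $\{0,1,\dots,2n\}$ with $0$ and $2n$ kept distinct; consequently a naive conjugation of $C_\pi$ by the value-shift does \emph{not} preserve the cycle, and indeed for the un-contracted odd-length permutation it can collapse a maximal pile to an empty one. The real content is therefore to carry the equivariance out in the cyclic setting of the contraction, i.e.\ to check that inserting $2n$ immediately before the value $1$ commutes with each generator. I expect this compatibility to be most delicate for $C_a$, which moves the position of the value $1$ and hence relocates the anchor about which the contraction is performed; reconciling that motion with the fixed wrap-around pointer is the step that will need the most care.
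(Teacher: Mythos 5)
Your first half — preservation of the number of valid pointer contexts — is correct and is in substance the same argument the paper makes: the paper checks, generator by generator, that the interlacement pattern $p\ldots q\ldots p\ldots q$ survives a one-step cyclic shift and a translation, which is exactly your observation that valid contexts are chord crossings in the (complete, thanks to the wrap pointer) double occurrence word, invariant under relabeling of the alphabet and rotation of the word. The gap is in the strategic-pile half. Your central claim there — that applying $C_a$ and then $\mathbf{cds}$ at the rotated context \emph{yields the same permutation} as applying $\mathbf{cds}_{p,q}$ and then $C_a$ — is false. Take the paper's own contraction from Example \ref{ex:reduct}, $\pi=\lbrack 2\;4\;1\;3\;5\rbrack\in M_{6,10}$, and the valid context $p^*=(3,4)$, $q^*=(4,5)$. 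The two blocks in $\pi$ are $\lbrack 4\rbrack$ and the empty block, so $\mathbf{cds}_{p^*,q^*}(\pi)=\lbrack 2\;1\;3\;4\;5\rbrack$ and hence $C_1(\mathbf{cds}_{p^*,q^*}(\pi))=\lbrack 5\;2\;1\;3\;4\rbrack$; but $C_1(\pi)=\lbrack 5\;2\;4\;1\;3\rbrack$, where the same (still valid) context now bounds the blocks $\lbrack 5\;2\rbrack$ and $\lbrack 1\;3\rbrack$, so $\mathbf{cds}_{p^*,q^*}(C_1(\pi))=\lbrack 1\;3\;4\;5\;2\rbrack\neq\lbrack 5\;2\;1\;3\;4\rbrack$. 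Because the two swapped blocks generally have different lengths, $\mathbf{cds}$ commutes with a cyclic shift only up to a further, move-dependent cyclic shift; the two outcomes agree only as cyclic words.

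That weaker, true statement cannot finish your argument. All $\mathbf{cds}$ fixed points $\lbrack k{+}1\;\ldots\;n\;1\;\ldots\;k\rbrack$ are cyclic rotations of one another (and of the identity), so ``equality up to rotation'' erases precisely the datum you need: which fixed point a given play reaches, equivalently which elements lie in the strategic pile. Transporting play sequences through the up-to-rotation correspondence shows that game durations are preserved, but not that the pile stays maximal — it cannot even rule out that the shifted permutation becomes sortable, since a rotation of a nontrivial fixed point can be the identity. Consequently the step you defer at the end (reconciling the shift with the insertion of $2n$ before the value $1$) is not residual bookkeeping; it is the entire content of the second half of the theorem, and the tool you propose to attack it with is the part that breaks. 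What the paper does instead is work directly with the cycle $C_{\pi'}=Y_{\pi'}\circ X_{2n}$: using $C_{\pi'}(0)=2n$ (item (1) of Proposition \ref{prop:maxpileprops}) it argues that the strategic pile of the uncontraction of $C_1(\pi)$, respectively $T_1(\pi)$, is a cyclic shift, respectively a translation, of $\SP(\pi')$, hence of the same, maximal, cardinality. Some version of that computation on $C_{\pi'}$ — not an equivariance of $\mathbf{cds}$ reachability — is what your proof still needs.
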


\begin{proof}
Let $\pi \in M_{2n,k}$. It suffices to show that 
\[\phi(1,0,\pi) \in M_{2n,k}\]
\[\phi(0,1,\pi) \in M_{2n,k}\]
Let $\pi' \in S_{2n}$ be the permutation for which $\pi$ is a reduction.

First $\phi(1,0,\pi)$ is a reduction of a permutation with the same number of moves as $\pi'$. This is seen by noting if $\sigma$ is the permutation for which $\phi(1,0,\pi)$ is a reduction, then the order of the pointers is the same in $\sigma$ as it is in $\pi'$, except the second to last and last pointers in $\pi'$ appear first and second respectively in $\sigma$. Suppose $(p,q)$ are a valid pointer context in $\pi'$. Then they appear in the order $\ldots p \ldots q \ldots p \ldots q$ in $\pi'$. Suppose then $p$ nor $q$ was one of the last two pointers in $\pi'$, in which case the pointers appear in the order $\ldots p \ldots q \ldots p \ldots q$ in $\sigma$, and thus form a valid pointer context. Otherwise, if $q$ was one of the last two pointers, then the list of pointers is of the form $\ldots q \ldots p \ldots q \ldots p$ in $\sigma$ and thus $(p,q)$ is a valid pointer context. Once again if $p$ and $q$ were the last two pointers in $\pi'$ then they appear as $pq\ldots p\ldots q$ in $\sigma$ and once again form a valid pointer context.

The same is true for $\phi(0,1,\pi)$ since if $...p...q...p...q...$ is a valid pointer context in $\pi'$, then the pointer word after translation has the form $...(p + 1)...(q + 1)...(p+1)...(q+1)...$, containing a valid pointer context.

For permutations with even length and maximal strategic pile we have that $C_{\pi'}(0) = 2n$ and $C_{\pi}(2n) = \pi'(2n)$, since $2n$ appears directly before $1$ in such a permutation. We have that the strategic pile of the permutation that reduces to $\phi(1,0,\pi)$ is a cyclic shift by one of the original permutation, and likewise the strategic pile of the permutation that reduces to $\phi(0,1,\pi)$ is a translation by $1$ of the original permutation. This implies the theorem.
\end{proof}

From now on, let $\phi'$ denote the restriction of $\phi$ to $\mathbb Z_{2n - 1} \times \mathbb Z_{2n - 1} \times M_{2n,k}$.

\begin{corollary}$\phi'$ is a $\mathbb Z_{2n - 1} \times \mathbb Z_{2n - 1} - $action on $M_{2n,k}$.
\end{corollary}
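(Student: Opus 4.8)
The plan is to verify directly that $\phi'$ satisfies the two defining axioms of a group action, treating $G = \mathbb{Z}_{2n-1} \times \mathbb{Z}_{2n-1}$ as a group under componentwise addition modulo $2n-1$ and $M_{2n,k} \subseteq S_{2n-1}$ as the set on which it acts. The most substantive requirement---that $\phi'$ actually lands in $M_{2n,k}$, so that it is a well-defined map $\mathbb{Z}_{2n-1} \times \mathbb{Z}_{2n-1} \times M_{2n,k} \to M_{2n,k}$---is precisely the content of the preceding Theorem, which I would cite to settle closure. What remains is to check the identity and compatibility axioms, and both reduce to short computations from the defining formula $\phi(a,b,\pi)(x) = \pi(x-a) + b$.

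For the identity axiom, the identity element of $G$ is $(0,0)$, and I would compute $\phi'(0,0,\pi)(x) = \pi(x-0) + 0 = \pi(x)$ for every $x \in \mathbb{Z}_{2n-1}$, so that $\phi'(0,0,\pi) = \pi$.

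For the compatibility axiom, given $(a_1,b_1), (a_2,b_2) \in G$ and $\pi \in M_{2n,k}$, I would set $\sigma = \phi'(a_2,b_2,\pi)$ and unwind the two applications:
\[
\phi'(a_1,b_1,\sigma)(x) = \sigma(x - a_1) + b_1 = \pi\bigl((x - a_1) - a_2\bigr) + b_2 + b_1 = \pi\bigl(x - (a_1+a_2)\bigr) + (b_1 + b_2),
\]
which is exactly $\phi'(a_1+a_2,\, b_1+b_2,\, \pi)(x)$. Since the shift indices and the translation indices each add modulo $2n-1$, this matches the group law on $G$, establishing $\phi'(a_1,b_1,\phi'(a_2,b_2,\pi)) = \phi'\bigl((a_1,b_1)+(a_2,b_2),\pi\bigr)$. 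Equivalently, the identity follows formally from the relations $C_a \circ C_b = C_{a+b}$ and $T_a \circ T_b = T_{a+b}$ together with the commutation $\phi(a,b,\pi) = C_a(T_b(\pi)) = T_b(C_a(\pi))$ recorded before the Theorem.

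I do not expect a genuine obstacle: the essential group-theoretic content---that shifts and translations modulo $2n-1$ compose additively and preserve membership in $M_{2n,k}$---has already been established by the Theorem and the preparatory remarks, so the corollary is in effect a bookkeeping verification of the action axioms. The one point meriting care is to confirm that the arithmetic in the argument $x-a$ and in the value $\pi(\cdot)+b$ is interpreted consistently modulo $2n-1$, since elements of $M_{2n,k}$ live in $S_{2n-1}$; this guarantees each $\phi'(a,b,\pi)$ is again a bijection of $\mathbb{Z}_{2n-1}$.
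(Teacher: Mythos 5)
Your proposal is correct and matches the paper's own proof: both verify the identity axiom via $\phi(0,0,\pi)(x)=\pi(x)$ and the compatibility axiom via the same direct computation $\pi(x-(a+c))+b+d = \phi(c,d,\phi(a,b,\pi))(x)$, with closure supplied by the preceding theorem. Your explicit remark that the modular arithmetic keeps everything in $S_{2n-1}$ is a minor, harmless addition.
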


\begin{proof}
Firstly, the identity $(0,0) \in \mathbb Z_{2n - 1} \times \mathbb Z_{2n - 1}$ preserves any element in $M_{2n,k}$. Let $\pi \in M_{2n,k}$.
\[\phi(0,0,\pi)(x) = \pi(x - 0) + 0 = \pi(x)\]
Thus, $\phi(0,0,\pi) = \pi$. Secondly, if $(a,b),(c,d) \in \mathbb Z_{2n - 1} \times \mathbb Z_{2n - 1}$
\[\phi(a + c, b + d,\pi)(x) = \pi(x - (a + c)) + b + d\]
\[= [\pi((x - c) - a) + b] + d = \phi(c,d,\phi(a,b,\pi))(x)\]
which proves the statement.
\end{proof}

Notice that 
\[\pi(x + 1) - \pi(x) = T_b(\pi)(x + 1) - T_b(\pi)(x)\]
and so, intuitively, two permutations are translations of one another if the differences between their elements are the same. This motivates the following definition.

\definition{
Let permutation $\pi$ be an element of $S_n$. The \emph{difference sequence} of $\pi$ is the $n-$tuple $D_\pi \in \mathbb Z_n^n := \prod_{i = 1}^n \mathbb Z_n$ where for $1 \leq k \leq n - 1$, the $k$th component of $D_\pi$ is defined as follows:
\begin{equation}
  D_\pi(k) = \left\{
                           \begin{array}{ll}
                                   \pi(k + 1) - \pi(k) & \mbox{ if } 1\leq k<n\\
                                   \pi(1) - \pi(n)       & \mbox{ otherwise}
                           \end{array}
                  \right.
\end{equation}
In the case that there exists a $0 < p < n$ such that
\begin{equation}
D_\pi(k + p) = D_\pi(k)
\end{equation}
$\pi$ is said to have a \emph{periodic difference sequence with period} $p$ if $p$ is the smallest integer such that $D_\pi$ ($3.2$) is satisfied. If $D_\pi$ is periodic with period $1$, then $\pi$ is said to have \emph{constant difference sequence}.
}

\begin{example}\label{ex:difference}
Consider $[\pi = [2\;4\;3\;8\;1\;9\;5\;7\;6]$ in $\textsf{S}_9$. Then $D_\pi = (2,8,5,2,8,5,2,8,5)$ which is a periodic difference sequence with period 3.
\end{example}

Given the difference sequence of a permutation $\pi \in M_{2n,k}$, its stabilizer under $\phi'$, denote $\text{Stab}(\pi)$, can be computed directly.

\begin{remark}
If $b \in \mathbb Z_n$ and $\pi \in S_n$, then $D_{T_b(\pi)} = D_\pi$. This is seen by computing for each $x \in \mathbb Z_n$ that 
$D_{T_b(\pi)}(x) = (\pi(x+1) + b) - (\pi(x) + b) = \pi(x + 1) - \pi(x) = D_\pi(x)$,
as desired.
\end{remark}

\begin{lemma}
Let $\pi \in S_n$ have periodic difference sequence with period $p$, and $a,b \in \mathbb Z_n$, $a$ and $b$ both not the identity, such that $C_a \circ T_b \circ \pi = \pi$. Then $A$ is an integer multiple of $p$.
\end{lemma}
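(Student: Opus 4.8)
The plan is to convert the stabilizer equation into a periodicity statement about the difference sequence $D_\pi$, and then to use that the periods of a cyclic sequence form a subgroup of $\mathbb{Z}$. (I read the conclusion as asserting that $a$, rather than the undefined $A$, is an integer multiple of $p$.) First I would unwind the hypothesis: since $\phi(a,b,\pi)(x) = \pi(x-a) + b$, the equation $C_a \circ T_b \circ \pi = \pi$ is exactly
\[
\pi(x) - \pi(x - a) = b \qquad \text{for all } x \in \mathbb{Z}_n,
\]
so that every ``$a$-step'' increment of $\pi$ equals the constant $b$.

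Next I would differentiate this identity. Writing it at $x+1$ and subtracting the instance at $x$ gives
\[
\bigl[\pi(x+1) - \pi(x)\bigr] - \bigl[\pi(x+1-a) - \pi(x-a)\bigr] = 0,
\]
which is precisely $D_\pi(x) = D_\pi(x-a)$ for all $x \in \mathbb{Z}_n$. Hence $a$ is a period of $D_\pi$; since $a \neq 0$ its representative lies strictly between $0$ and $n$, so this is a genuine period in the sense of the definition, and in particular $\pi$ does have a periodic difference sequence.

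Finally I would invoke the subgroup structure of the periods. The set $H = \{\, d \in \mathbb{Z} : D_\pi(x+d) = D_\pi(x) \text{ for all } x \,\}$ is a subgroup of $\mathbb{Z}$ containing $n\mathbb{Z}$, hence $H = p\mathbb{Z}$ where $p$, its least positive element, is exactly the period of $\pi$ (and $p \mid n$). Since $a \in H$, we get $p \mid a$, as claimed. If one prefers to avoid naming the subgroup, the same conclusion follows by noting that $\gcd(a,p)$ is again a period (by B\'ezout, writing $\gcd(a,p) = ua + vp$ and shifting by $a$ and $p$), so minimality of $p$ forces $\gcd(a,p) = p$, i.e. $p \mid a$.

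I expect the only delicate point to be this last step: one must keep all indices in $\mathbb{Z}_n$ throughout, so that ``period'' is the cyclic notion and the set of periods genuinely closes up under addition and negation. The hypothesis $b \neq 0$ is not needed for the conclusion---indeed, since $\pi$ is a bijection, $b = 0$ would force $a = 0$---while $a \neq 0$ is used only to ensure that the period we produce is nontrivial, so that $p$ exists with $0 < p < n$.
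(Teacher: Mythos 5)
Your proof is correct and takes essentially the same route as the paper's: both arguments show that the stabilizer equation forces $D_\pi$ to be invariant under the cyclic shift by $a$ (you by differencing $\pi(x)-\pi(x-a)=b$ directly, the paper by noting that $T_b$ preserves and $C_a$ shifts difference sequences), and then conclude $p \mid a$ from minimality of $p$; you also correctly read the typo ``$A$'' as $a$. The only cosmetic difference is the final divisibility step, which the paper handles by division with remainder ($a = kp+r$ with $0<r<p$ would yield a smaller period) and you handle by observing that the periods of $D_\pi$ form the subgroup $p\mathbb{Z}$ of $\mathbb{Z}$ --- the same minimality argument in different packaging.
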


\begin{proof}
Since translations preserve the difference sequence of a permutation, $D_{C_a(\pi)} = D_\pi$. A cyclic shift by $a$ to a permutation $\sigma \in S_n$ applies a cyclic shift by $a$ to $D_\sigma$. This is seen as follows:
\[D_\sigma(k) = \sigma(k + 1) - \sigma(k)\]
and so
\[D_{C_a(\sigma)}(k) = C_a(\sigma)(k + 1) - C_a(\sigma)(k)\]
\[= \sigma(k + 1 - a) - \sigma(k - a) = D_{\sigma}(k - a)\]
If $p$ does not divide $a$, then $a = kp + r$ for some nonnegative integers $k,r$ with $0 < r < p$. By the periodicity of the difference sequence the cyclic shift applied to the difference sequence by $a$ is equivalent to a cyclic shift by $r$, which contradicts the fact that $p$ is the smallest positive integer such that ($3.1$) holds for $D_\pi$.
\end{proof}

\begin{theorem}If $\pi \in M_{2n,k}$ then $\text{Stab}(\pi)$ under the action $\phi'$ is cyclic. Moreover, if $D_\pi$ is periodic with period $p$, then $\text{Stab}(\pi)$ is generated by $(p,\pi(p) - \pi(0))$.
\end{theorem}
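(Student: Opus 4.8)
The plan is to prove the ``moreover'' clause directly, since exhibiting an explicit generator simultaneously establishes that $\text{Stab}(\pi)$ is cyclic. Write $N = 2n-1$ for the length of the contracted permutation, so $\pi \in S_N$ and the acting group is $\mathbb{Z}_N \times \mathbb{Z}_N$ under componentwise addition (this is the group law read off from the Corollary, where $\phi(c,d,\phi(a,b,\pi)) = \phi(a+c,b+d,\pi)$). Set $c = \pi(p) - \pi(0)$, recalling $\pi(0) = \pi(N)$. The first observation I would record is that $(a,b) \in \text{Stab}(\pi)$ means $\pi(x-a) + b = \pi(x)$ for all $x$, equivalently that $b = \pi(x) - \pi(x-a)$ is a constant independent of $x$; in particular, for each fixed $a$ there is \emph{at most one} $b$ with $(a,b) \in \text{Stab}(\pi)$, since two such values of $b$ would have to agree.

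First I would show $(p,c) \in \text{Stab}(\pi)$. Consider $f(x) = \pi(x) - \pi(x-p)$. A one-line telescoping computation gives $f(x+1) - f(x) = D_\pi(x) - D_\pi(x-p)$, which vanishes for every $x$ because $D_\pi$ is periodic with period $p$. Hence $f$ is constant, and evaluating at $x = p$ shows $f \equiv \pi(p) - \pi(0) = c$. Therefore $\pi(x-p) + c = \pi(x)$ for all $x$, i.e. $\phi(p,c,\pi) = \pi$. Since stabilizers are subgroups and the operation is additive, it follows that $m(p,c) = (mp,mc) \in \text{Stab}(\pi)$ for every integer $m$, giving the inclusion $\langle (p,c)\rangle \subseteq \text{Stab}(\pi)$.

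For the reverse inclusion I would take an arbitrary $(a,b) \in \text{Stab}(\pi)$ and argue that its first coordinate is a multiple of $p$. If $a = 0$, then $b = \pi(x) - \pi(x) = 0$ and $(a,b)$ is the identity. If $a \ne 0$ but $b = 0$, then $\pi(x-a) = \pi(x)$ for all $x$ contradicts injectivity of the permutation $\pi$, so this case cannot occur. In the remaining case $a$ and $b$ are both nonzero, so the preceding Lemma applies and yields $a = mp$ for some $m$. By the uniqueness of the second coordinate noted above, together with the fact that $(mp,mc) \in \text{Stab}(\pi)$, we conclude $b = mc$, so $(a,b) = m(p,c)$. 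This proves $\text{Stab}(\pi) \subseteq \langle (p,c)\rangle$, hence equality, and in particular that $\text{Stab}(\pi)$ is cyclic.

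The main obstacle I anticipate is the bookkeeping around the edge cases when invoking the Lemma, which is stated only for $a$ and $b$ both nontrivial: I must separately dispose of the pairs $(a,0)$ and $(0,b)$ before concluding $p \mid a$, and it is precisely the bijectivity of $\pi$ that eliminates the $(a,0)$ case. The telescoping identity establishing $(p,c) \in \text{Stab}(\pi)$ is the one genuinely computational step, but it reduces cleanly to the definition of periodicity. Finally, I would remark that when $D_\pi$ admits no proper period the same analysis forces $\text{Stab}(\pi)$ to be trivial, hence still cyclic, which covers the first assertion of the theorem in full generality.
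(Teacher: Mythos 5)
Your proposal is correct and takes essentially the same approach as the paper: both proofs verify via periodicity that $(p,\pi(p)-\pi(0))$ lies in $\text{Stab}(\pi)$, dispose of the degenerate cases $a=0$ or $b=0$ using that a nontrivial translation or cyclic shift cannot fix a permutation, and then invoke Lemma 3.7 to conclude that the first coordinate of any stabilizer element is a multiple of $p$. Your uniqueness-of-the-second-coordinate observation is just a cleaner packaging of the paper's cancellation step, in which it composes with a power of $(p,q)$ to force $kp+a=0$ and hence $kq+b=0$.
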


\begin{proof}
First we show that $(p,\pi(p) - \pi(0)) \in \text{Stab}(\pi)$. If $x \in \mathbb Z_{2n - 1}$ we have
\[\phi(p,\pi(p) - \pi(0),\pi)(x) = \pi(x - p) + \pi(p) - \pi(0)\]
Since $D_\pi$ is periodic with period $p$, $\pi(x) - \pi(x - p) = \pi(p) - \pi(0)$ and thus
\[= \pi(x)\]
as desired.

Let $a,b \in \mathbb Z_{2n - 1}$ such that $C_a \circ T_b \circ \pi = \pi$. We claim that either $a = b = 0$ or $a \neq 0$ and $b \neq 0$. First suppose that $a = 0$ and $b \neq 0$. Then $T_b(\pi) = \pi$, which is a contradiction since $b \neq 0$. Now if $a \neq 0$ and $b = 0$, we have that $C_a(\pi) = \pi$, a contradiction since $a \neq 0$.

Now suppose that $a,b \in \mathbb Z_{2n - 1}$ are such that $a \neq 0$, $b \neq 0$ and $C_a \circ T_b \circ \pi = \pi$. There exists $q \in \mathbb Z_{2n - 1}$ such that $C_p \circ T_q \circ \pi = \pi$. Then we have that, for any $k$ a positive integer.
\[C_{kp} \circ T_{kq} \circ C_a \circ T_b \circ \pi = C_{kp} \circ C_a \circ T_{kq} \circ T_b \circ \pi\]
\[= C_{kp + a} \circ T_{kq + b} \circ \pi = \pi\]
Then by Lemma $3.7$, we could have taken $k$ such that $kp + a = 0$, and thus $kq + b = 0$, proving the theorem.
\end{proof}

\begin{corollary}$\text{Stab}(\pi)$ is trivial if and only if $D_\pi$ is not periodic.
\end{corollary}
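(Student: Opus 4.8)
The plan is to prove the two implications separately, drawing entirely on the preceding theorem, lemma, and remark, since this corollary is essentially a repackaging of facts already in hand. For the forward direction I would argue the contrapositive: if $D_\pi$ is periodic then $\text{Stab}(\pi)$ is nontrivial. By the preceding theorem, if $D_\pi$ has period $p$ then $\text{Stab}(\pi)$ is generated by $(p,\pi(p)-\pi(0))$. Since periodicity in the sense of the definition requires $0<p<2n-1$, the first coordinate $p$ is a nonzero element of $\mathbb Z_{2n-1}$, so this generator differs from the identity $(0,0)$ and $\text{Stab}(\pi)$ is nontrivial. This yields ``$\text{Stab}(\pi)$ trivial $\Rightarrow D_\pi$ not periodic''.

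For the reverse direction I would assume $D_\pi$ is not periodic and show that every element of $\text{Stab}(\pi)$ is the identity. Take $(a,b)\in\text{Stab}(\pi)$, so $C_a\circ T_b\circ\pi=\pi$. First I would invoke the dichotomy established inside the proof of the preceding theorem: either $a=b=0$, or both $a\ne 0$ and $b\ne 0$. The mixed cases are ruled out because $T_b$ with $b\ne 0$ moves every entry, and because a nonzero cyclic shift of a permutation cannot fix it (by injectivity, $\pi(x-a)=\pi(x)$ for all $x$ forces $a=0$). Hence it suffices to derive a contradiction from the single remaining possibility $a\ne 0$.

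The key step is to translate the fixed-point equation into a statement about the difference sequence. Since translation preserves the difference sequence (by the earlier remark) and a cyclic shift by $a$ shifts the difference sequence by $a$ (exactly the computation carried out in the proof of the preceding lemma), applying $D$ to $C_a\circ T_b\circ\pi=\pi$ forces $D_\pi(k-a)=D_\pi(k)$ for all $k\in\mathbb Z_{2n-1}$. Choosing the representative of $a$ in $\{1,\dots,2n-2\}$ then exhibits a period with $0<a<2n-1$, so $D_\pi$ is periodic, contradicting the hypothesis. Therefore $(a,b)=(0,0)$ and $\text{Stab}(\pi)$ is trivial.

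I do not expect a genuine obstacle here, as both directions reduce to prior results; the only points requiring care are bookkeeping ones. I must keep ``periodic'' in the definitional sense of admitting a period strictly between $0$ and $2n-1$, so that the generator $(p,\pi(p)-\pi(0))$ really is nontrivial in the forward direction and so that a nonzero stabilizer shift really does produce an \emph{admissible} period in the reverse direction. I must also apply the dichotomy on $(a,b)$ before extracting the period, so that $a\ne 0$ is the only case left to contradict; otherwise the argument that $a$ is a period would be vacuous.
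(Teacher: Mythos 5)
Your proof is correct and is essentially the paper's argument: the paper's own proof is the one-line instruction ``Apply the proof of Lemma 3.7,'' and your reverse direction carries out exactly that computation (translations preserve $D_\pi$, a cyclic shift by $a$ shifts $D_\pi$ by $a$, so a nontrivial stabilizer element with $a\neq 0$ forces periodicity). Your forward direction, citing the generator $(p,\pi(p)-\pi(0))$ from the preceding theorem, is the natural complement the paper leaves implicit, so there is no substantive difference in approach.
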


\begin{proof}
Apply the proof of Lemma $3.7$
\end{proof}

\begin{corollary}The orbit of $\pi$ under $\phi'$, denoted $\mathcal{O}_\pi$, has order $(2n - 1)p$ if $D_\pi$ is periodic with period $p$. $\mathcal{O}_\pi$ is of order $(2n - 1)^2$ otherwise.
\end{corollary}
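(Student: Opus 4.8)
The plan is to reduce everything to the orbit--stabilizer theorem and then read off $\abs{\text{Stab}(\pi)}$ from the structural results just proved. Since $\phi'$ is a $\mathbb Z_{2n-1}\times\mathbb Z_{2n-1}$-action on $M_{2n,k}$, the group is $G=\mathbb Z_{2n-1}\times\mathbb Z_{2n-1}$ with $\abs{G}=(2n-1)^2$, so orbit--stabilizer gives
\[
\abs{\mathcal{O}_\pi}=\frac{\abs{G}}{\abs{\text{Stab}(\pi)}}=\frac{(2n-1)^2}{\abs{\text{Stab}(\pi)}}.
\]
Thus the whole statement is equivalent to computing $\abs{\text{Stab}(\pi)}$ in the two cases, and the non-periodic case is immediate: by the corollary that $\text{Stab}(\pi)$ is trivial precisely when $D_\pi$ is not periodic, we get $\abs{\text{Stab}(\pi)}=1$ and hence $\abs{\mathcal{O}_\pi}=(2n-1)^2$.

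For the periodic case I would use the preceding theorem, which says $\text{Stab}(\pi)$ is cyclic, generated by $g=(p,\pi(p)-\pi(0))$; so it remains to show $\mathrm{ord}(g)=(2n-1)/p$. The first thing to establish is that $p\mid(2n-1)$, so that this number is even an integer. Here I would treat $D_\pi$ as a genuinely cyclic word of length $2n-1$: the set of its periods is closed under addition modulo $2n-1$ and hence forms a subgroup of $\mathbb Z_{2n-1}$, so the minimal period $p$ divides the length $2n-1$. Writing $c=\pi(p)-\pi(0)$, the order of $g$ in $\mathbb Z_{2n-1}\times\mathbb Z_{2n-1}$ is the least $m>0$ with $mp\equiv0$ and $mc\equiv0\pmod{2n-1}$; since $p\mid(2n-1)$ the first congruence first holds at $m=(2n-1)/p$, so $\mathrm{ord}(g)$ is a multiple of $(2n-1)/p$ and in particular $\mathrm{ord}(g)\ge(2n-1)/p$.

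It then remains to verify that $m=(2n-1)/p$ also annihilates the second coordinate. For this I would reuse the identity $\pi(x)-\pi(x-p)=c$ valid for every $x$, which follows from $p$-periodicity of $D_\pi$ exactly as in the proof of the cyclic-stabilizer theorem (summing $D_\pi$ over any window of $p$ consecutive indices). Telescoping this around the full cycle, with $mp=(2n-1)\equiv0$ so that $\pi(x-mp)=\pi(x)$, gives
\[
0=\pi(x)-\pi(x-mp)=\sum_{i=1}^{m}\bigl(\pi(x-(i-1)p)-\pi(x-ip)\bigr)=mc,
\]
so $mc\equiv0\pmod{2n-1}$. Hence $m=(2n-1)/p$ satisfies both congruences, forcing $\mathrm{ord}(g)=(2n-1)/p$; substituting $\abs{\text{Stab}(\pi)}=(2n-1)/p$ into orbit--stabilizer yields $\abs{\mathcal{O}_\pi}=(2n-1)^2\big/\bigl((2n-1)/p\bigr)=(2n-1)p$.

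The only genuinely delicate points are the divisibility $p\mid(2n-1)$ and the telescoping identity that forces the order of the second coordinate of $g$ to divide that of the first. Both rely on the cyclic (wrap-around) structure of $D_\pi$ and of $\pi$ as functions on $\mathbb Z_{2n-1}$, rather than viewing them as linear arrays; I expect the cleanest exposition to isolate the fact that a cyclic word's minimal period divides its length as a short preliminary observation, after which the order computation is routine.
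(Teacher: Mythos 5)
Your proof is correct and takes essentially the same route as the paper's: orbit--stabilizer applied to the stabilizer generated by $(p,\pi(p)-\pi(0))$, the same telescoping sum showing $\frac{2n-1}{p}(\pi(p)-\pi(0))=0$ in $\mathbb Z_{2n-1}$, and triviality of the stabilizer in the non-periodic case. The only difference is that you explicitly justify $p \mid (2n-1)$ via the subgroup-of-periods argument, a point the paper uses implicitly when asserting that $p$ has order $\frac{2n-1}{p}$ in $\mathbb Z_{2n-1}$.
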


\begin{proof}
This is a consequence of the orbit-stabilizer theorem. First let $D_\pi$ be periodic with period $p$. The order of $(p,\pi(p) - \pi(0)) \in \mathbb Z_{2n - 1} \times \mathbb Z_{2n - 1}$ is $\frac{2n - 1}{p}$ since $p$ has order $\frac{2n - 1}{p}$ in $\mathbb Z_{2n - 1}$, and
\[\frac{2n - 1}{p}(\pi(p) - \pi(0)) = \sum_{i = 1}^{(2n-1)/p}[\pi(ip) - \pi((i - 1)p)] = \pi\left(\frac{2n - 1}{p}p\right) - \pi(0) = 0\]
Thus $\pi(p) - \pi(0)$ has order dividing $\frac{2n - 1}{p}$ in $\mathbb Z_{2n - 1}$. Hence $|\text{Stab}(\pi)| = \frac{2n - 1}{p}$. Therefore by the orbit-stabilizer theorem
\[|\mathcal{O}_\pi| = \frac{|\mathbb Z_{2n - 1} \times \mathbb Z_{2n - 1}|}{|\text{Stab}(\pi)|}\]
\[= \frac{(2n - 1)^2}{\frac{2n - 1}{p}} = (2n - 1)p\]
as desired. For the case when $D_\pi$ is not periodic, $|\text{Stab}(\pi)| = 1$ and thus
\[|\mathcal{O}_\pi| = \frac{(2n - 1)^2}{1}\]
as desired.
\end{proof}

\subsection*{Periodic Difference Sequence Characterization}
We return to difference sequences to characterize permutations permutations with nontrivial stabilizer. In particular we will characterize permutations with periodic difference sequences and further characterize permutations with both periodic difference sequence and maximal strategic pile. These results aid in counting the number of permutations with maximal strategic pile and periodic difference sequence with specified period. 

\begin{lemma}\label{elem_diff}
For a permutation $\pi \in S_n$ with difference sequence $D_{\pi}$, for all $i,
j \in [n]$
\[\pi(i+j) - \pi(i) = \sum_{k=0}^{j-1} D_{\pi}(i+k)\]
\end{lemma}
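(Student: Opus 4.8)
The plan is to recognize that the right-hand side is a telescoping sum, after first putting the definition of the difference sequence into a uniform form. I would begin by observing that the piecewise definition of $D_\pi$ can be written simply as $D_\pi(k) = \pi(k+1) - \pi(k)$ for every $k \in \mathbb Z_n$, where all indices and values are read modulo $n$. This is immediate for $1 \le k < n$; for the wrap-around term $k = n$ the definition gives $\pi(1) - \pi(n)$, which coincides with $\pi(n+1) - \pi(n)$ since $\pi$ is a function on $\mathbb Z_n$ and $n+1 \equiv 1$. Securing this uniform formula is the key preliminary reduction, since it lets me treat the wrap-around entry on exactly the same footing as every ordinary term.

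With this in hand, I would proceed by induction on $j$. The base case $j = 1$ is precisely the uniform definition, $\pi(i+1) - \pi(i) = D_\pi(i)$. For the inductive step, assuming the identity for $j$, I would split off the final increment:
\[
\pi(i+j+1) - \pi(i) = \bigl(\pi(i+j+1) - \pi(i+j)\bigr) + \bigl(\pi(i+j) - \pi(i)\bigr),
\]
where the first summand equals $D_\pi(i+j)$ by the uniform formula and the second equals $\sum_{k=0}^{j-1} D_\pi(i+k)$ by the inductive hypothesis, together yielding $\sum_{k=0}^{j} D_\pi(i+k)$. Equivalently, and perhaps more transparently, I could bypass induction and expand the sum directly as
\[
\sum_{k=0}^{j-1} D_\pi(i+k) = \sum_{k=0}^{j-1} \bigl(\pi(i+k+1) - \pi(i+k)\bigr),
\]
which telescopes, leaving only $\pi(i+j) - \pi(i)$.

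The only genuine subtlety — and hence the step I would treat most carefully — is the cyclic indexing, since $i,j \in [n]$ allow the index $i+k$ to pass through $n$ and wrap around. The uniform reformulation from the first step dissolves this worry: because all arithmetic takes place in $\mathbb Z_n$ and every term of the sum is genuinely of the form $\pi(m+1) - \pi(m)$, consecutive terms cancel regardless of whether an index wraps. As a sanity check on the conventions, taking $j = n$ makes the sum collect all entries of $D_\pi$, which should vanish, consistent with $\pi(i+n) - \pi(i) = 0$. I expect no difficulty beyond this bookkeeping.
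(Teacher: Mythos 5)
Your proposal is correct and follows essentially the same route as the paper: the paper's proof is exactly the telescoping computation you give (adding and subtracting the intermediate values $\pi(i+k)$ and recognizing each consecutive difference as $D_\pi(i+k)$). Your explicit handling of the wrap-around term via the uniform formula $D_\pi(k)=\pi(k+1)-\pi(k)$ in $\mathbb{Z}_n$ is a point the paper leaves implicit, so it is a welcome bit of extra care rather than a divergence.
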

\begin{proof}
\begin{align*}
\pi(i+j) - \pi(i) &= \pi(i + j) - \pi(i) + \sum_{k=1}^{j - 1} \pi(i + k) - \sum_{k=1}^{j - 1} \pi(i + k)\\
&= \sum_{k=1}^{j} \pi(i + k) - \sum_{k=0}^{j - 1} \pi(i + k)\\
&= \sum_{k=0}^{j-1}\pi(i+k+1) - \pi(i+k)\\
&= \sum_{k=0}^{j-1}D_{\pi}(i+k)
\end{align*}
\end{proof}

\begin{lemma}\label{diff_criteria}
An $n-$tuple $E \in \mathbb Z_{n}^n$ is the difference sequence of a permutation in $S_{n}$ if and only if $\sum_{i=1}^{n} E(i) = 0$
and there do not exist $i,j \in [n], j < n$ such that
$\sum_{k=0}^{j-1} E(i+k) = 0$
\end{lemma}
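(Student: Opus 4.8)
The plan is to treat both stated conditions as direct consequences of Lemma \ref{elem_diff}, which expresses every partial sum of $E$ as a difference of values of the candidate permutation. For the forward direction, suppose $E = D_\pi$ for some $\pi \in S_n$. Taking $j = n$ in Lemma \ref{elem_diff} and using that indices are read modulo $n$ (so that $\pi(i + n) = \pi(i)$), the full sum $\sum_{k=0}^{n-1} E(i+k)$ equals $\pi(i+n) - \pi(i) = 0$; since this sum ranges over all of $\mathbb{Z}_n$ it is exactly $\sum_{i=1}^n E(i)$, giving the first condition. For the second condition, I argue by contradiction: if some partial sum $\sum_{k=0}^{j-1} E(i+k)$ with $0 < j < n$ vanished, then Lemma \ref{elem_diff} would force $\pi(i+j) = \pi(i)$, and since $\pi$ is a bijection this gives $i + j \equiv i \pmod{n}$, i.e. $j \equiv 0 \pmod{n}$, contradicting $0 < j < n$.

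For the converse, given a tuple $E$ satisfying both conditions, I construct a candidate $\pi$ by fixing a base value $\pi(1)$ and defining $\pi(1+j) = \pi(1) + \sum_{k=0}^{j-1} E(1+k)$ for $0 \le j < n$, with indices read modulo $n$. The first point to verify is that this prescription is well-defined as a function $\mathbb{Z}_n \to \mathbb{Z}_n$: going once around the cycle must return the base value, which is precisely the requirement that the full sum $\sum_{i=1}^n E(i)$ vanish, supplied by the first condition. Next I check that $\pi$ is a bijection. Because $\pi$ is a self-map of the finite set $\mathbb{Z}_n$, it suffices to prove injectivity; if $\pi(i) = \pi(i')$ for $i \neq i'$, then writing $j = i' - i$ with $0 < j < n$ and applying Lemma \ref{elem_diff} yields $\sum_{k=0}^{j-1} E(i+k) = \pi(i') - \pi(i) = 0$, contradicting the second condition. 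Finally, $D_\pi = E$ holds by construction, since consecutive differences of the partial sums recover the entries of $E$, and the first condition guarantees that the wraparound component $D_\pi(n) = \pi(1) - \pi(n)$ also matches $E(n)$.

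The argument is essentially bookkeeping layered on top of Lemma \ref{elem_diff}; the only genuinely delicate point is the well-definedness of the constructed $\pi$ under the cyclic indexing, where I must confirm that the partial-sum formula does not depend on how many times one wraps around $\mathbb{Z}_n$, and this is exactly where the hypothesis $\sum_{i=1}^n E(i) = 0$ is used. Care is also needed to keep the modular index conventions of the paper consistent throughout (with $n$ representing $0$), so that the exceptional final component $D_\pi(n) = \pi(1) - \pi(n)$ is absorbed uniformly into the cyclic formula rather than treated as a separate case.
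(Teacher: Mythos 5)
Your proof is correct and takes essentially the same route as the paper's: the forward direction applies Lemma \ref{elem_diff} with $j=n$ and uses injectivity of $\pi$ to rule out vanishing partial sums, and the converse constructs the candidate permutation from partial sums of $E$ (the closed form of the paper's recursive definition of $\alpha$) and derives injectivity by contradiction from the second condition. If anything, you are slightly more explicit than the paper about the wraparound issues --- well-definedness of the construction and the check that $D_\pi(n) = \pi(1) - \pi(n) = E(n)$ --- both of which the paper leaves implicit.
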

\begin{proof}
Suppose $E$ is the difference sequence of some permutation $\pi \in S_n$. Fix arbitrary $i, j \in [n]$. By Lemma \ref{elem_diff},
\[\sum_{k=1}^{n} E(k) = \sum_{k=0}^{n-1} E(1+k) = \pi(1) - \pi(1+n) =  0 \]
Suppose towards a contradiction that there are $i,j \in [n], j < n$ such that
\[\sum_{k=0}^{j-1} E(i+k) = 0 \]
Then by Lemma \ref{elem_diff}, $\pi(i+j) - \pi(i) = 0$, so $\pi(i) = \pi(i+j)$ but because $j \neq n, i \neq i+j$, contradicting the injectivity of $\pi$.

Now let $E \in \mathbb Z_{n}^n$ such that
$\sum_{i=1}^{n} E(i) = 0$
and assume that there are no $i,j \in [n], j < n$ such that
$\sum_{k=0}^{j-1} E(i+k) = 0$.
Fix arbitrary $a_1 \in \mathbb Z_n$ and define $\alpha: \mathbb Z_n \rightarrow \mathbb Z_n$ by:
\begin{align*}
\alpha(1) &\mapsto a_1\\
\alpha(i) &\mapsto \alpha(i-1) + E(i-1) & \text{for } i \in [n], i > 1
\end{align*}
To show that $\alpha$ is a permutation, it is enough to show that $\alpha$ is injective. 
By construction, we have that for all $i, j \in [n]$,
\[\alpha(i) + \sum_{k=0}^{j-1} E(i+k) = \alpha(i+j) \]
Assume for a contradiction that there exist $i, j \in \mathbb Z_n, i \neq j$ such that $\alpha(i) = \alpha(j)$. Then
\[ \alpha(i) - \alpha(j) = \sum_{k=i}^{j-1}E(k) = 0\]
contradicting the choice of $E$.
\end{proof}

\begin{remark}
The permutation $\alpha$ constructed above is not unique, as any element of $\mathbb Z_n$ can be chosen to be $a_1$. In particular, $\alpha$ constructed by a given choice of $a_1$ is some translation away from every other permutation with the same difference sequence.
\end{remark}

For a permutation $\pi \in S_n$, let $\pi_k$ denote the function
$\pi_k: \mathbb Z_k \rightarrow \mathbb Z_k$ satisfying
$\pi_k(i) \equiv \pi(i) \bmod k$ for all $i \in \mathbb{Z}_k$.

\begin{lemma}
\label{pib}
Let $\pi \in S_{n}$ and $p \mid n$, and suppose the difference sequence of $\pi$
is periodic with a period dividing $p$. Then for all $1 \le i \le n,
\pi(i) \equiv \pi(i+p) \bmod p$.
\end{lemma}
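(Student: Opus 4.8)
The plan is to reduce the target congruence $\pi(i) \equiv \pi(i+p) \pmod p$ to a single divisibility statement about a ``window sum'' of the difference sequence, and then extract that divisibility from the global constraint that the difference sequence sums to zero. Concretely, applying Lemma \ref{elem_diff} with $j = p$ gives
\[
\pi(i+p) - \pi(i) = \sum_{k=0}^{p-1} D_\pi(i+k) =: T_i,
\]
so the entire lemma amounts to showing $T_i \equiv 0 \pmod p$ for each $i$. Throughout one must remember that every equality here lives in $\mathbb Z_n$; since $p \mid n$ there is a well-defined reduction $\mathbb Z_n \to \mathbb Z_p$, and it is through this map that the final congruence is to be read.

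The key step will use the hypothesis that $D_\pi$ has period dividing $p$, so that $D_\pi(j+p) = D_\pi(j)$ for all $j$, together with Lemma \ref{diff_criteria}, which gives $\sum_{k=1}^n D_\pi(k) = 0$ in $\mathbb Z_n$. Fixing $i$ and writing $\ell = n/p$ (an integer because $p \mid n$), I would observe that summing $D_\pi$ over the $n$ consecutive indices $i, i+1, \dots, i+n-1$ is the same as summing over all of $\mathbb Z_n$, hence equals $0$. Decomposing this cyclic sum into $\ell$ consecutive windows of length $p$ and using $p$-periodicity to identify each window sum with $T_i$, I obtain
\[
0 = \sum_{k=0}^{n-1} D_\pi(i+k) = \sum_{m=0}^{\ell - 1}\sum_{k=0}^{p-1} D_\pi(i + mp + k) = \ell\, T_i = \frac{n}{p}\, T_i \quad \text{in } \mathbb Z_n .
\]
Reading $\frac{n}{p} T_i \equiv 0 \pmod n$ as the integer divisibility $n \mid \frac{n}{p}\,\widetilde{T_i}$ for an integer representative $\widetilde{T_i}$, and writing $n = \frac{n}{p}\cdot p$, one cancels the common factor $n/p$ to conclude $p \mid \widetilde{T_i}$, i.e. $T_i \equiv 0 \pmod p$. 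Combined with the first display this yields $\pi(i+p) \equiv \pi(i) \pmod p$, and since $i$ is arbitrary the lemma follows.

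The only genuinely delicate point, and the step I would take care to write out explicitly, is the modulus bookkeeping in this last deduction: all of the algebra naturally produces identities in $\mathbb Z_n$, whereas the conclusion is a congruence mod $p$, so the passage from $\frac{n}{p}T_i \equiv 0 \pmod n$ to $p \mid T_i$ must be justified by lifting to an integer representative and cancelling $n/p$, and one should note that ``$\equiv 0 \pmod p$'' is well-defined on $\mathbb Z_n$ precisely because $p \mid n$. A secondary routine point to verify is that $p$-periodicity $D_\pi(j+p) = D_\pi(j)$ really does follow from the hypothesis ``period dividing $p$'', since this is exactly what legitimizes identifying each of the $\ell$ windows with the same sum $T_i$ in the decomposition above.
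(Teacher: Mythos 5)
Your proposal is correct and follows essentially the same route as the paper: both express $\pi(i+p)-\pi(i)$ as a length-$p$ window sum of $D_\pi$ (via Lemma \ref{elem_diff}), use periodicity to equate all such windows so the total sum $0$ equals $\frac{n}{p}$ times that window sum in $\mathbb Z_n$, and then cancel the factor $\frac{n}{p}$ to conclude $p$ divides the window sum. The only difference is presentational: you are more explicit about lifting to integer representatives and about why the mod-$p$ statement is well-defined on $\mathbb Z_n$, points the paper leaves implicit.
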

\begin{proof}
Recall that $\sum_{i=1}^{n} D_{\pi}(i) = 0$. By periodicity of the $D_{\pi}$,
for all $i, j \in \mathbb [n]$, we have
\[\sum_{k=i}^{i+p-1}D_{\pi}(k) = \sum_{k=j}^{j+p-1}D_{\pi}(k)\]
Let $s$ denote this sum of any $p$ consecutive elements of $D_{\pi}$. Necessarily,
\[\sum_{k=1}^{n}D_{\pi}(k) = \frac{n}{p} \cdot s \equiv 0 \bmod n\]
Therefore, $p \mid s$, so for all $i \in \mathbb Z_n, \pi(i) \equiv
\pi(i+p) \bmod p$.
\end{proof}

\begin{lemma}
\label{pib_diff}
Let $\pi, p$ be defined as in Lemma \ref{pib}. Then $\pi_b$ is a permutation in $S_p$
\end{lemma}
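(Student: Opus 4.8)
The plan is to prove the lemma in two stages: first establish that $\pi_p$ is genuinely a well-defined function $\mathbb{Z}_p \to \mathbb{Z}_p$, and then show it is a bijection by a counting argument driven by the divisibility $p \mid n$. (Here $\pi_b$ in the statement is the map $\pi_p$ from the definition preceding Lemma \ref{pib}, since the hypotheses fix the modulus $p$.)

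For well-definedness I would start from the conclusion of Lemma \ref{pib}, namely $\pi(i) \equiv \pi(i+p) \pmod p$ for all $i$. Iterating this relation yields $\pi(i) \equiv \pi(i + mp) \pmod p$ for every integer $m$, so that the residue $\pi(i) \bmod p$ depends only on $i \bmod p$. Since $p \mid n$, the residue class of an index modulo $p$ is well-defined inside $\mathbb{Z}_n$, and hence the assignment $\pi_p(r) := \pi(i) \bmod p$ (for any $i \equiv r \pmod p$) is unambiguous; this is exactly the map $\pi_p$ as defined.

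For bijectivity, partition $\mathbb{Z}_n$ into the $p$ residue classes $C_r = \{\, i \in \mathbb{Z}_n : i \equiv r \pmod p \,\}$, each of cardinality $n/p$. The key observation is that because $\pi$ is a bijection of $\{1, \ldots, n\}$ and $p \mid n$, reducing the list $\pi(1), \ldots, \pi(n)$ modulo $p$ produces each residue in $\mathbb{Z}_p$ exactly $n/p$ times, namely the same multiset obtained by reducing $1, \ldots, n$ modulo $p$. On the other hand, by well-definedness every index in $C_r$ contributes the single residue $\pi_p(r)$, so $C_r$ contributes exactly $n/p$ copies of $\pi_p(r)$ to this multiset. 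If two distinct classes $C_r, C_{r'}$ satisfied $\pi_p(r) = \pi_p(r')$, that common residue would occur at least $2n/p$ times, contradicting that each residue occurs exactly $n/p$ times. Hence $\pi_p$ is injective, and since it is a self-map of the finite set $\mathbb{Z}_p$, it is a bijection; that is, $\pi_p \in S_p$.

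I expect the only real subtlety to be bookkeeping rather than depth: one must be careful that well-definedness uses the full iterated form of Lemma \ref{pib} (not merely a single shift by $p$), and that the counting step genuinely relies on $p \mid n$, since it is this divisibility that forces each residue class of indices to have the uniform size $n/p$ and makes the two multisets (indices modulo $p$ and values modulo $p$) coincide. No heavier machinery seems necessary; in particular one can avoid invoking Lemma \ref{diff_criteria}, whose second (no proper vanishing partial sum) condition would here amount to the very injectivity being proved and would therefore be circular.
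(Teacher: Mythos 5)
Your proof is correct, and you were right to read the statement's $\pi_b$ as $\pi_p$. The combinatorial engine is the same one the paper uses --- a pigeonhole count resting on $p \mid n$, the bijectivity of $\pi$, and Lemma \ref{pib} --- but the packaging is genuinely different. The paper routes the argument through Lemma \ref{diff_criteria}: it checks that $D_{\pi_p}$ sums to $0 \bmod p$ (using $p \mid s$ from Lemma \ref{pib}) and then rules out vanishing proper partial sums by observing that the $n/p$ values $\pi(i+fp)$, $f \in \mathbb{Z}_{n/p}$, are distinct and all congruent to $\pi(i) \bmod p$, hence exhaust that residue class, so any $j$ with $\pi(j) \equiv \pi(i) \bmod p$ forces $j \equiv i \bmod p$. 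You instead prove injectivity of $\pi_p$ directly by comparing two multisets: reducing the values $\pi(1), \ldots, \pi(n)$ modulo $p$ yields each residue exactly $n/p$ times, while each index class $C_r$ contributes $n/p$ copies of $\pi_p(r)$, so a collision $\pi_p(r) = \pi_p(r')$ would overload one residue. These are equivalent pigeonhole arguments, but yours buys a self-contained proof: it bypasses Lemma \ref{diff_criteria} entirely, and in particular avoids the step (left implicit in the paper) of passing from ``$D_{\pi_p}$ is realizable as the difference sequence of some permutation'' back to ``$\pi_p$ itself is a permutation,'' which requires the translation remark following Lemma \ref{diff_criteria}. One caveat: your closing claim that invoking Lemma \ref{diff_criteria} ``would be circular'' overstates things. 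The paper's route is not a logical circle --- the no-vanishing-partial-sum condition is equivalent to injectivity of $\pi_p$, and the paper does the honest counting work exactly there; it is a redundant detour, not circular reasoning.
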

\begin{proof}
We show $D_{\pi_p}$ satisfies the criteria of Lemma \ref{diff_criteria}.
Let $s$ be defined as in Lemma \ref{pib}.
$\sum D_{\pi_p} \equiv s \bmod p$, and as seen in Lemma \ref{pib}, $p \mid s$,
so $\sum D_{\pi_p} \equiv 0 \bmod{p}$.

Assume that on the contrary there exist  $i, t \in [p], t < p$
such that $\sum_{k=i}^{i+t-1} D_{\pi_p}(k) \equiv 0 \bmod p$. Let $j = i + t$.
Note that because $t \neq p, i \not\equiv j$. The assumption is
equivalently, $\pi_p(i) \equiv \pi_p(j) \bmod p$.
For all $i \in \mathbb Z_n$, there are $\frac{n}{p}$ elements of $\mathbb
Z_n$ (including $i$ itself) equivalent $\phantom{}\bmod p$ to $i$.
By Lemma \ref{pib}, for each $f \in \mathbb Z_{\frac{n}{p}}$,
$\pi(i) \equiv \pi(i+fp)
\bmod p$, so because $\pi$ is a bijection, there must exist
$f \in \mathbb Z_{\frac{n}{p}}$ such that $j = i + fp$. But then
$j \equiv i + fp$, a contradiction.
\end{proof}

\begin{repexample}{ex:difference}
Consider the permutation $\pi\in\textsf{S}_9$ with periodic difference sequence:
\begin{align*}
\pi &= [2\;4\;3\;8\;1\;9\;5\;7\;6]\\
D_\pi &= (2,8,5,2,8,5,2,8,5)
\end{align*}
$D_\pi$ has period 3. $\pi_3 = [2\;1\;3\;2\;1\;3\;2\;1\;3]$ which is $\frac{9}{3} = 3$ copies of $[2\;1\;3]$.
\end{repexample}

We now describe what information is needed to construct a permutation with periodic difference sequence.

\begin{theorem}
    \label{timewheel}
    Let $n \in \mathbb N, \ p \mid n$. A triple
    \[
    (\varphi,
    \; \mathcal R,
    \; k
    )
    \in
    S_p \times
    \left(
    \prod_{i=1}^{p} \mathbb \{0\} \cup \left[\frac{n}{p} - 1\right]\right) \times
    \mathbb Z_{\frac{n}{p}}
    \]
    defines a permutation $\pi \in S_n$ with periodic difference sequence (having period dividing $p$) by
    \[\pi(i) = \mathcal R_{i \bmod p} \cdot p + \varphi(i \bmod p)) + k p \left \lfloor{\frac{i-1}{p}} \right \rfloor\]
    if $\text{gcd}(k,\frac{n}{p}) = 1$.
\end{theorem}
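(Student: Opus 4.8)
The plan is to prove two things separately: that the formula defines a bijection $\pi : \mathbb Z_n \to \mathbb Z_n$, and that the resulting $\pi$ has difference sequence periodic with period dividing $p$. Throughout I write $m = n/p$, an integer since $p \mid n$, and I read $\mathcal R_{i \bmod p}$ and $\varphi(i \bmod p)$ under the convention that the residue $0$ is identified with $p$, so that all indices land in $\{1, \ldots, p\}$; pinning down this convention is the first thing to do, since the product indexing runs $i = 1, \ldots, p$ while $i \bmod p$ naively produces $\{0, \ldots, p-1\}$.

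Periodicity is the quick part, and it does not use the coprimality hypothesis at all. Since $(i+p) \bmod p = i \bmod p$ while $\lfloor \frac{i+p-1}{p}\rfloor = \lfloor \frac{i-1}{p}\rfloor + 1$, every summand of the formula except the last is unchanged when $i$ is replaced by $i+p$, and the last summand increases by exactly $kp$. Hence $\pi(i+p) = \pi(i) + kp$ for all $i$, so $D_\pi(i+p) = \pi(i+p+1) - \pi(i+p) = \pi(i+1) - \pi(i) = D_\pi(i)$. This shows $D_\pi$ is periodic with some period dividing $p$; it need not equal $p$, which is precisely why the statement claims only divisibility. The coprimality hypothesis $\gcd(k,m)=1$ will be needed only for bijectivity.

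For bijectivity, since $\mathbb Z_n$ is finite it suffices to prove injectivity, and the argument separates into two layers. First, reducing the formula modulo $p$ and using $p \mid n$ (so that reduction mod $n$ is compatible with reduction mod $p$), the terms $\mathcal R_{i\bmod p}\cdot p$ and $kp\lfloor\frac{i-1}{p}\rfloor$ vanish and one gets $\pi(i) \equiv \varphi(i \bmod p) \pmod p$. Because $\varphi \in S_p$ is a bijection of residues mod $p$, the value $\pi(i) \bmod p$ determines $i \bmod p$; thus $\pi$ carries the residue class of $r$ into the residue class of $\varphi(r)$, partitioning both domain and codomain into $p$ classes of size $m$. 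Second, I restrict to one class: if $i \equiv j \pmod p$ then the $\mathcal R$ and $\varphi$ contributions coincide, so $\pi(i) = \pi(j)$ forces $kp(\lfloor\frac{i-1}{p}\rfloor - \lfloor\frac{j-1}{p}\rfloor) \equiv 0 \pmod n$, that is $k(q_i - q_j) \equiv 0 \pmod m$ after dividing through by $p$, where $q_i, q_j \in \{0, \ldots, m-1\}$ are the block indices. Here $\gcd(k,m) = 1$ enters: $k$ is invertible modulo $m$, whence $q_i \equiv q_j \pmod m$, and since both lie in $\{0, \ldots, m-1\}$ we conclude $q_i = q_j$. Equal residue and equal block index determine $i$ uniquely, so $i = j$ and $\pi$ is injective, hence a permutation. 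Equivalently, within each residue class the map $q \mapsto \mathcal R_r + kq \bmod m$ is a bijection of $\mathbb Z_m$ exactly because $k$ is a unit, which is the structural reason the construction works.

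I expect the genuine content to be concentrated entirely in that coprimality step, so the main obstacle is bookkeeping rather than difficulty: keeping the index conventions for $\mathcal R_{i\bmod p}$ and $\varphi(i\bmod p)$ unambiguous, and verifying at each stage that the reductions modulo $p$ and modulo $n$ are applied consistently (this is where $p \mid n$ is used silently). An alternative route is to compute $D_\pi$ explicitly and invoke Lemma~\ref{diff_criteria}, but the difference at the block boundaries $i \equiv 0 \pmod p$ is awkward to record, so I would prefer the direct injectivity argument above.
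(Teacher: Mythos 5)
Your proof is correct and follows essentially the same route as the paper: injectivity is established by the same two-case split (if $i \not\equiv j \bmod p$, the bijectivity of $\varphi$ forces $\pi(i) \not\equiv \pi(j) \bmod p$; if $i \equiv j \bmod p$, the hypothesis $\gcd(k, n/p)=1$ forces equal block indices), and your periodicity argument via $\pi(i+p) = \pi(i) + kp$ is just a streamlined form of the paper's computation that $[\pi(i+1)-\pi(i)] - [\pi(i+p+1)-\pi(i+p)] = 0$. The only differences are organizational — you argue injectivity contrapositively and state the shift relation explicitly — not substantive.
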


\begin{proof}
    Let $i,j \in \mathbb{Z}_n, \, i \not\equiv j \bmod n$. Without loss
    of generality, let $i > j$. We have
    \begin{align*}
        \pi(i) - \pi(j)
        &= p \cdot \left(\mathcal R_{i \bmod p} - \mathcal R_{j \bmod p}
        + k \cdot \left(\left \lfloor{\frac{i-1}{p}} \right \rfloor
        - \left \lfloor{\frac{j-1}{p}} \right \rfloor\right)\right)
        + \varphi(i \bmod p) - \varphi(j \bmod p)
    \end{align*}
    In the case that $i \not\equiv j \bmod p$,  $0 < \varphi(i \bmod p) - \varphi(j \bmod p) < p$ and so $\pi(i) - \pi(j) \not\equiv 0 \bmod p$. Since $p \mid n$, $\pi(i) - \pi(j) \not\equiv 0 \bmod n$. Thus $\pi$ is a permutation.
    
    In the case that $i \equiv j \bmod p$, we have
    \begin{align*}
        \pi(i) - \pi(j)
        &= k p \cdot \left(\left \lfloor{\frac{i-1}{p}} \right \rfloor
        - \left \lfloor{\frac{j-1}{p}} \right \rfloor\right)\\
    \end{align*}   
    Since $i \neq j$ and $i \equiv j \bmod p$, $\left \lfloor{\frac{i-1}{p}} \right \rfloor
    \neq \left \lfloor{\frac{j-1}{p}} \right \rfloor$. Therefore,
    $0 < \left \lfloor{\frac{i-1}{p}} \right \rfloor
    - \left \lfloor{\frac{j-1}{p}} \right \rfloor < \frac{n}{p} < n$ and since
    gcd$(k, \frac{n}{p}) = 1$ as well, $\pi(i) - \pi(j) \not\equiv 0 \bmod n$.
    
    Now we show $\pi$ has periodic difference sequence. Let $i \in \mathbb{Z}_n$.
    \begin{align*}
        &[\pi(i+1) - \pi(i)] - [\pi(i + 1 + p) - \pi(i + p)]\\
        &=
        [\pi(i + p) - \pi(i)] - [\pi(i+1) - \pi(i+p+1)]\\
        &=
        (\mathcal{R}_{(i + p)\bmod p} - \mathcal{R}_{i \bmod p}) \cdot p + \varphi((i + p)\bmod p) - \varphi(i\bmod p) + kp\left(\left\lfloor\frac{i + p - 1}{p}\right\rfloor - \left\lfloor \frac{i - 1}{p} \right\rfloor \right)\\ &
        - \left((\mathcal{R}_{(i + 1)\bmod p} - \mathcal{R}_{(i + p + 1) \bmod p}) \cdot p + \varphi((i + 1)\bmod p) - \varphi((i + p + 1)\bmod p) + kp\left(\left\lfloor\frac{i}{p}\right\rfloor - \left\lfloor \frac{i + p}{p} \right\rfloor \right)\right)\\
        &=
        kp\left(
        \left \lfloor{\frac{i+ p -1}{p}} \right \rfloor
        -
        \left \lfloor \frac{i - 1}{p} \right \rfloor
        +
        \left \lfloor \frac{i}{p} \right \rfloor
        -
        \left \lfloor \frac{i + p}{p} \right \rfloor\right)\\
        &=
        kp\left(
        \left \lfloor{\frac{i - 1}{p}} \right \rfloor
        -
        \left \lfloor \frac{i - 1}{p} \right \rfloor + 1
        +
        \left \lfloor \frac{i}{p} \right \rfloor
        -
        \left \lfloor \frac{i}{p} \right \rfloor - 1\right)\\
        &=
        0
    \end{align*}
\end{proof}

The converse is actually true. That is, the information in \ref{timewheel} is precisely the amount of information needed to construct a permutation with periodic difference sequence. 

\begin{theorem}
    $\pi \in S_n$ has a periodic difference sequence with period dividing $p$ (for $p \mid n$) if and only if there exists a triple:
    $$
    (\varphi,
    \; \mathcal R,
    \; k
    )
    \in
    S_p \times
    \left(\prod_{i=1}^{p} \mathbb \{0\} \cup \left[\frac{n}{p} - 1\right]\right) \times
    \mathbb Z_{\frac{n}{p}}
    $$
    such that for all $i \in \mathbb Z_n, \; \pi(i) = \mathcal R_{i \bmod p} \cdot p + \varphi(i \bmod p)) + k p \left \lfloor{\frac{i-1}{p}} \right \rfloor$
    and $gcd(k, \frac{n}{p}) = 1$. In fact $\pi$ is uniquely defined by such a triple.
\end{theorem}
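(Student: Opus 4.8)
The plan is to observe that the ``if'' direction is exactly the content of Theorem~\ref{timewheel}, so the real work lies in the ``only if'' direction together with uniqueness; I would prove both simultaneously by showing that each of the three components of the triple is \emph{forced} by $\pi$, and that the resulting triple does reproduce $\pi$. Throughout I take $i_r \in \{1,\dots,p\}$ to be the representative of a residue class $r$ modulo $p$, so that $\lfloor (i_r-1)/p\rfloor = 0$.

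First I would reconstruct $\varphi$. Reducing the defining formula modulo $p$ gives $\pi(i) \equiv \varphi(i \bmod p) \bmod p$, so $\varphi$ is forced to equal $\pi_p$, the reduction of $\pi$ modulo $p$. By Lemma~\ref{pib} this reduction depends only on $i \bmod p$, and by Lemma~\ref{pib_diff} it is a genuine element of $S_p$; hence setting $\varphi := \pi_p$ is both the only possible choice and a legitimate one. Next I would reconstruct $\mathcal R$ and $k$. Evaluating the formula at $i = i_r$, where the floor term vanishes, forces $\mathcal R_r = (\pi(i_r) - \varphi(r))/p$; since $\varphi(r) \equiv \pi(i_r) \bmod p$ with $1 \le \varphi(r) \le p$, this quotient is an integer lying in $\{0,\dots,\tfrac{n}{p}-1\}$, so $\mathcal R$ is a well-defined and uniquely determined element of the product set.

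For $k$, Lemma~\ref{elem_diff} gives $\pi(i+p)-\pi(i) = \sum_{t=0}^{p-1} D_\pi(i+t)$, and by the periodicity argument in the proof of Lemma~\ref{pib} this sum $s$ is independent of $i$ and divisible by $p$; I would then set $k := (s/p) \bmod \tfrac{n}{p}$, which the difference form of the formula forces. The condition $\gcd(k,\tfrac{n}{p})=1$ I would extract from injectivity of $\pi$: since consecutive members of a fixed residue class differ by $s = kp$, the values of $\pi$ on that class are $\pi(i_r) + kpt$ for $t = 0,\dots,\tfrac{n}{p}-1$, and these are distinct modulo $n$ precisely when $k$ is invertible modulo $\tfrac{n}{p}$.

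Finally I would verify that this triple regenerates $\pi$. Writing an arbitrary $i$ in its class as $i = i_r + mp$ with $m = \lfloor (i-1)/p\rfloor$, telescoping the identity $\pi(i_r + (j+1)p) - \pi(i_r + jp) = s = kp$ over $j = 0,\dots,m-1$ yields $\pi(i) = \pi(i_r) + kpm = \mathcal R_r p + \varphi(r) + kp\lfloor (i-1)/p\rfloor$, which is the claimed formula; uniqueness then follows automatically, since each of $\varphi$, $\mathcal R$, and $k$ was read off from $\pi$ with no freedom. I expect the main obstacle to be the modular bookkeeping rather than any conceptual difficulty: specifically, confirming that $s$ is genuinely constant and divisible by $p$, and that bijectivity of $\pi$ is equivalent to $\gcd(k,\tfrac{n}{p})=1$. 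The conceptual content is comparatively mild because the formula is already known, via Theorem~\ref{timewheel}, to define a valid permutation with the desired periodicity.
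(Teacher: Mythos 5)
Your proposal is correct and follows essentially the same route as the paper: recover $\varphi$ as the mod-$p$ reduction (justified by Lemmas \ref{pib} and \ref{pib_diff}), recover $\mathcal R$ as the integer quotient, recover $k$ from the constant sum of $p$ consecutive differences, derive $\gcd(k,\tfrac{n}{p})=1$ from injectivity of $\pi$, and verify the formula by telescoping over residue classes. The only cosmetic difference is that you phrase $k$ via the constant $s$ from Lemma \ref{elem_diff} while the paper sets $k=(\pi(p+1)-\pi(1))/p$ directly, and you make the forced-ness (uniqueness of the triple) explicit where the paper leaves it implicit.
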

\begin{proof}
    The `if' portion of this statement was shown in Theorem \ref{timewheel}.
    We now prove the `only if' portion. Let $\pi \in S_n$ have periodic difference sequence with period dividing $p$.
    We recover $\varphi \in S_p$ from the first $p$ elements of $\pi$ taken
    mod $p$: $\varphi(i) := \pi(i) \bmod p$. By Lemma \ref{pib_diff}, $\varphi$ defined
    in this way is indeed a permutation in $S_p$.
    
    Define $\mathcal{R}$ for $1 \leq i \leq p$ by
    \begin{align*}
    \mathcal R_{i} &:= \frac{\pi(i) - (\pi(i) \bmod p)}{p}
    \end{align*}
    Let $k \in \mathbb Z_{\frac{n}{p}}$ be defined by
    \begin{align*}
    k := \frac{\pi(p + 1) - \pi(1)}{p}
    \end{align*}
    By Lemma \ref{pib}, $k$ defined in this manner is indeed an integer. Also we have that
    \[\pi(1 + mp) - \pi(1) = kpm\]
    for any integer $0 \leq m < \frac{n}{p}$. If $k$ and $\frac{n}{p}$ were not coprime, then we'd have that $kp$ and $n$ are not coprime, in which case there is an $m$ in the specified range so that
    \[= 0 \bmod n\]
    which contradicts the fact that $\pi$ is a permutation. We now show that we can recover $\pi$ with the formula in the statement of the theorem.
    \begin{align*}
    & \mathcal R_{i \bmod p} \cdot p + \varphi(i \bmod p) +
    k p \left \lfloor{\frac{i-1}{p}} \right \rfloor\\
    &= \pi(i \bmod p) + (\pi(p + 1) - \pi(1)) \left \lfloor{\frac{i-1}{p}}
    \right \rfloor\\
    &= \pi(i \bmod p) + (\pi(i) - \pi(i-p)) \left \lfloor{\frac{i-1}{p}} \right
    \rfloor\\
    &= \pi(i \bmod p) + \left( \sum_{\ell=i-p}^{i-1} \pi(\ell + 1) - \pi( \ell )
    \right) \left \lfloor{\frac{i-1}{p}} \right \rfloor\\
    &= \pi(i \bmod p) + \sum_{j = 1}^{\left \lfloor{\frac{i - 1}{p}}\right \rfloor} \left(
    \sum_{\ell=i \bmod p + jp}^{i \bmod p + (j+1)p - 1} \pi(\ell + 1) - \pi( \ell )
    \right)
    & \text{by Lemma \ref{pib}}\\
    &= \pi(i \bmod p) + \sum_{\ell=i \bmod p}^{i-1} \pi(\ell + 1) - \pi( \ell )\\
    &= \pi(i \bmod p) + \pi(i) - \pi(i \bmod p) \\
    &= \pi(i)
    \end{align*}
\end{proof}

\subsection*{The Strategic Pile of Permutations with Periodic Difference Sequences}
\begin{theorem} A permutation $\pi \in S_{2n-1}$ with difference sequence having period $p$ has maximal strategic pile if and only if the following conditions hold:
\begin{enumerate}
    \item Let $\varphi$ be the permutation in $S_p$ gotten from reducing the first $p$ elements of $\pi$ $\bmod p$. The unreduced counterpart of $\varphi$ in $S_{p+1}$ (relinquishing the identification of $1$ and $p + 1$) has maximal strategic pile.
    \item If $K=\frac{1}{p}(\pi(p+1)-\pi(1))$, then the order of $K-1\bmod \frac{2n-1}{p}$ is $\frac{2n-1}{p}$
\end{enumerate}
\end{theorem}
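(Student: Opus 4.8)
The plan is to recast maximal strategic pile as a single statement about the cycle structure of $C_{\hat\pi}$, where $\hat\pi\in S_{2n}$ is the even-length unreduction of $\pi$ (reinserting the top symbol before $1$); since a reduction and its unreduction carry the same pile data, ``$\pi$ has maximal strategic pile'' is the assertion that $\hat\pi$ does. First I would record the elementary reformulation: for $\sigma\in S_{2n}$, Theorem \ref{thm:strpilebounds} together with the parity of $C_\sigma=Y_\sigma\circ X_{2n}$ (a product of two $(2n+1)$-cycles, hence even, hence with an odd number of cycles) forces $|\text{SP}(\sigma)|=2n-1$ to be equivalent to $C_\sigma$ being a single $(2n+1)$-cycle through all of $\{0,1,\dots,2n\}$ with $C_\sigma(0)=2n$ (so that $0$ and $2n$ lie together and the pile exhausts the remaining symbols). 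The theorem thus becomes: $C_{\hat\pi}$ is one such full cycle iff (1) and (2) hold.

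The engine is a symmetry of $C_{\hat\pi}$ supplied by the stabilizer computed earlier in this section, namely that $\text{Stab}(\pi)$ under $\phi'$ is cyclic of order $M:=\frac{2n-1}{p}$, generated by the period shift $s=(p,\ \pi(p)-\pi(0))=(p,Kp)$. Because $s$ fixes $\pi$, the simultaneous ``shift positions by $p$, shift values by $Kp$'' map is a symmetry of the successor data underlying $C_{\hat\pi}$: I would check directly from $C_{\hat\pi}(x)=Y_{\hat\pi}(x+1)$ and the relation $\pi(i+p)=\pi(i)+Kp$ that $C_{\hat\pi}$ intertwines the order-$M$ map $x\mapsto x+Kp$ on the $2n-1$ non-special symbols $\{1,\dots,2n-1\}$, the two special symbols being $0$ and $2n$. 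The orbits of this map are the residue classes modulo $p$, so that passing to the quotient identifies the induced permutation with the $C$-operator of the reduced pattern $\varphi\in S_p$, whose circular strategic pile is maximal exactly when its unreduction $\varphi'\in S_{p+1}$ has maximal strategic pile. This is condition (1), and by the first paragraph it is equivalent to the base being a single $(p+2)$-cycle through the two special classes.

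With the base cycle fixed, the passage to $C_{\hat\pi}$ is an $M$-fold cover ramified exactly at $0$ and $2n$ (the lifts of the two special base symbols), whose fibers are singletons, while the $p$ generic base symbols have fibers of size $M$; counting gives $pM+2=2n+1$, as required. A single base cycle lifts to a single cycle of $C_{\hat\pi}$ iff the holonomy $h\in\mathbb Z_M$ accumulated once around the base cycle is a unit. The core computation is to evaluate $h$: each generic step of $C_{\hat\pi}$ advances the fiber coordinate by the winding $K$, while the increment $X$ in $C=Y\circ X$ contributes an offsetting $-1$ per traversal, so that $h\equiv K-1\pmod M$. Connectivity of the cover is therefore equivalent to $\gcd(K-1,M)=1$, i.e.\ to $K-1$ having order $M$ in $\mathbb Z_M$, which is condition (2); combining the two equivalences yields the theorem.

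I expect the holonomy evaluation to be the main obstacle, and it is where I would spend the most care. The difficulty is that $s$ genuinely fails to commute with $C_{\hat\pi}$ precisely at the seam — the reinserted top symbol and the symbols $0,2n$ — so the cover is ramified there, and one must track how crossing the seam reconnects the $M$ sheets and confirm that the net fiber shift is $K-1$ rather than $K$. Rather than rely on informal covering-space language, I would pin this down by substituting the explicit formula $\pi(i)=\mathcal R_{i\bmod p}\,p+\varphi(i\bmod p)+Kp\lfloor (i-1)/p\rfloor$ of Theorem \ref{timewheel} into $C_{\hat\pi}$ and computing the first-return map to a single fiber outright. A secondary point to handle with care is the degenerate range of small $p$ (most notably $p=1$, where $\varphi$ is trivial and condition (1) must be read through $\varphi'\in S_{p+1}$); the $p=1$ case, where $\pi$ is an arithmetic-progression permutation and $C_{\hat\pi}$ reduces to translation by $-(K-1)$, serves as a clean sanity check that the final criterion is exactly $\gcd(K-1,M)=1$ (consistent with Proposition \ref{prop:maxpileprops}).
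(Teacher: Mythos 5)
Your proposal is correct and follows essentially the same route as the paper's own proof: the paper likewise obtains condition (1) by reducing the entries of the cycle of $C_\pi$ modulo $p$ (your quotient step), and obtains condition (2) by showing that $p$ consecutive applications of $C_\pi$ amount to a translation by $p(K-1)$, so the pile is maximal exactly when $K-1$ has order $\frac{2n-1}{p}$ (your holonomy computation, up to an irrelevant sign). The covering-space packaging and the explicit appeal to the stabilizer element $(p,Kp)$ are your own dressing, but the decomposition of the cycle structure and the key per-period computation are identical to the paper's.
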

\begin{proof}

Let $\rho$ be the permutation achieved by reducing the first $p$ elements of $\pi\bmod p$. We will first show that $\rho$ must have maximal strategic pile. Let $S$ be the size of the strategic pile of $\rho$. By the definition of $C_\pi$ and the periodicity of $\pi\bmod p$, if one reduces the numbers listed in $C_\pi\bmod p$, the resulting list of numbers after $2n$ and before $0$ will have period $S$. Thus, for this list of numbers to have size $2n-1$, we must have that $S=p$. Now, assume that $\rho$ has maximal strategic pile. Let $O$ denote the order of $p(K-1)\bmod 2n-1$.

We will show that the strategic pile of $\pi$ has size $p\cdot O$. Since $\rho$ has max strategic pile, the first $p$ elements in the orbit of $x$ in $C_{\pi}$ have one element of each coset $\mod p$; $C_\pi^k(x)=x\mod p$ iff $k=0\mod p$. Let $d_k=\frac{1}{K}(c_k-c_{k+1})$. There exists a permutation $\sigma\in S_p$ such that for each $k\in [p]$, $C_\pi^{k-1}(x)+1-d_{\sigma(k)}=C_\pi^{k}(x)$. Thus, $C_\pi^p(x)=\pi(x)+p-\sum_{k=1}^p d_{\sigma(k)}=\pi(x)+p(K-1)$. Thus, the smallest $kp$ such that $C_{\pi}^{kp}(\pi(n))=\pi(n)$ is the smallest $kp$ such that $kp(K-1)=0\mod n$; by the definition of the strategic pile, $kp$ is also the size of the strategic pile. But $k=O$, as desired.
\end{proof}

Since the case of odd length permutations with maximal strategic pile reduces to the even length case, we are specifically interested in permutation in $M_{2n}$. Using the above characterizations, we can count the members of $M_{2n}$ having periodic difference sequence with period dividing $p|2n - 1$. Since reduced permutations in $M_{2n}$ have length $2n-1$, $p$ must be odd. To count such permutations we must count $\varphi, \mathcal{R}, K$ subject to the conditions in Theorem 3.16 and Theorem 3.17. Counting $\varphi$ is counting permutations of (even) length $p+1$ having maximal strategic pile, which is $2\frac{p!}{p+1}$ by Theorem 3.3 in \cite{GAETZ}. There are $\left(\frac{2n - 1}{p}\right)^p$ choices for $\mathcal R$. Finally, to count the choices of $k$ that yield a permutation that has both periodic difference sequence and maximal strategic pile, we need to define a variant of Euler's totient function. Let 
\[\psi (m) := \{0 < c <= m: gcd(c, m) = gcd(c-1, m) = 1\}\]
It turns out that this function has closed form:
\[\psi(m) = m \prod_{\substack{q \mid m \\ q\text{ prime}}}\left(1 - \frac{2}{q}\right).\]
We have then shown the following:

\begin{corollary}
    Let $n$ be a positive integer and let $p$ be a divisor of $2n-1$. The number of permutations in $M_{2n}$ having periodic difference sequence with period dividing $p$ is
    \[
       2\frac{p!}{p+1} \cdot \left(\frac{2n-1}{p}\right)^p \cdot \psi(n)
    \]
\end{corollary}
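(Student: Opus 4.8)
The plan is to convert the count into a product of three independent enumerations via the parametrization of permutations with periodic difference sequence. First I would invoke the parametrization theorem (the converse of Theorem~\ref{timewheel}): a permutation $\pi\in S_{2n-1}$ whose difference sequence has period dividing $p$ is described by a unique triple $(\varphi,\mathcal R,k)$ with $\varphi\in S_p$, $\mathcal R\in\prod_{i=1}^{p}\bigl(\{0\}\cup[\tfrac{2n-1}{p}-1]\bigr)$, $k\in\mathbb Z_{(2n-1)/p}$ and $\gcd(k,\tfrac{2n-1}{p})=1$, via $\pi(i)=\mathcal R_{i\bmod p}\cdot p+\varphi(i\bmod p)+kp\lfloor\frac{i-1}{p}\rfloor$. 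Since $M_{2n}$ consists exactly of the reduced (length $2n-1$) permutations of maximal strategic pile, the corollary is the problem of counting those triples whose associated $\pi$ also has maximal strategic pile.

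Next I would feed these triples into the strategic pile characterization theorem, which asserts that $\pi$ has maximal strategic pile precisely when (1) the unreduced counterpart in $S_{p+1}$ of $\varphi$ has maximal strategic pile and (2) the order of $K-1$ in $\mathbb Z_{(2n-1)/p}$ equals $\tfrac{2n-1}{p}$, where $K=\tfrac1p(\pi(p+1)-\pi(1))$. The crucial simplification is that $K$ is literally the parameter $k$, and that conditions (1) and (2) involve disjoint coordinates of the triple while $\mathcal R$ is left totally free. Thus the count factors: $\mathcal R$ contributes $\bigl(\tfrac{2n-1}{p}\bigr)^{p}$; the admissible $\varphi$ are counted by the maximal-strategic-pile permutations of the even length $p+1$ (here $p$ is odd because $p\mid 2n-1$), of which there are $2\tfrac{p!}{p+1}$ by Theorem 3.3 in \cite{GAETZ}, using that contraction is a bijection between such $S_{p+1}$ permutations and their $S_p$ reductions; and condition (2) rewrites as $\gcd(k-1,\tfrac{2n-1}{p})=1$, which together with the parametrization's $\gcd(k,\tfrac{2n-1}{p})=1$ is exactly the defining condition for $k$ to be counted by $\psi\bigl(\tfrac{2n-1}{p}\bigr)$. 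Multiplying gives the product in the statement, the argument of $\psi$ being the block count $\tfrac{2n-1}{p}$.

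Finally, to justify the closed form of $\psi$ I would argue that $\gcd(c,m)=\gcd(c-1,m)=1$ is equivalent to $c\not\equiv 0,1\pmod q$ for every prime $q\mid m$; as $m=\tfrac{2n-1}{p}$ is odd these two forbidden residues are distinct, so the Chinese Remainder Theorem makes the count multiplicative and each prime power $q^{a}\,\|\,m$ contributes $q^{a}-2q^{a-1}=q^{a}(1-\tfrac2q)$ residues, whence $\psi(m)=m\prod_{q\mid m}(1-\tfrac2q)$. I expect the genuine obstacle to be the verification of the decoupling rather than any arithmetic: one must read off from the strategic pile characterization that the maximal-pile property is truly independent of $\mathcal R$ and that conditions (1) and (2) constrain $\varphi$ and $k$ separately, so that the admissible triples form a product set and the count is a clean product of the three factors.
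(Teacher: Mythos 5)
Your proposal is correct and takes essentially the same route as the paper: both use the triple parametrization $(\varphi,\mathcal R,k)$ from the two preceding theorems, count $\varphi$ via Theorem 3.3 of \cite{GAETZ} as $2\frac{p!}{p+1}$, leave $\mathcal R$ free with $\left(\frac{2n-1}{p}\right)^p$ choices, and observe that the parametrization's condition $\gcd\left(k,\frac{2n-1}{p}\right)=1$ together with the maximal-pile condition $\gcd\left(k-1,\frac{2n-1}{p}\right)=1$ makes $k$ counted by $\psi\left(\frac{2n-1}{p}\right)$. Your remark that the argument of $\psi$ is the block count $\frac{2n-1}{p}$ agrees with the paper's own proof (so the $\psi(n)$ appearing in the statement is evidently a typo), and your Chinese-Remainder verification of the closed form of $\psi$ supplies a detail the paper merely asserts.
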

\begin{proof}
By Theorem 3.16, $K$ must satisfy $\text{gcd}\left(K,\frac{2n-1}{p}\right) = 1$. By Theorem 3.17, in order for the resulting permutation to have maximal strategic pile, we must also have that $K - 1$ has order $\frac{2n-1}{p}$ in $\mathbb Z_{2n-1}$, which is equivalent to $\text{gcd}\left(K-1,\frac{2n-1}{p}\right) = 1$. Thus $\psi\left(\frac{2n-1}{p}\right)$ is the number of valid choices for $K$. The result then follows from the preceeding exposition. 
\end{proof}

\section{Taxonomy}

The techniques developed in the previous section will now be used to analyze $M_{2n,k}$ for certain values of $n$ and $k$. As an outline of the upcoming work, difference sequences will be used to compute some values $\vert M_{2n,k}\vert \mod (2n-1)^2$. Then, elements of $M_{2n,k}$ are characterized for the following specific values of $k$: $k=\binom{2n-1}{2}$, $k=\binom{2n-1}{2}-4$, and $k=2n-1$. 

\begin{definition}[Compatible Pointers] Let $\pi$ be a permutation, and let $p^*=(p,p+1)$, $q^*=(q,q+1)$ be a pair of pointers. Pointers $p^*$ and $q^*$ are \emph{compatible} if they constitute a valid pointer context in $\pi$.
\end{definition}

\begin{definition}[Universal Pointer] Let $\pi$ be a permutation. A pointer is $\pi$-\emph{universal} if it is compatible with each other pointer. 
\end{definition}

\subsection*{Counting Classes mod $(2n-1)^2$}\leavevmode

In the following theorem and proof, for $d\in\mathbb{Z}_{2n-1}^*$, let $d^{-1}$ denote the multiplicative inverse of $d$ in $\mathbb{Z}_{2n-1}$. Besides this, all arithmetic will be done in $\mathbb{Z}$.

\begin{theorem}Let $\pi \in S_{2n}$ have constant difference sequence with difference value $d$. If $\gcd(2n - 1,d) = 1$, then the number of valid pointer contexts for $\pi$ is $(2n - 1)\cdot \min\left(d^{-1} - 1,2n-1-d^{-1}\right)$.
\end{theorem}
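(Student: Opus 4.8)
The plan is to translate the count of valid pointer contexts into a count of crossings in a highly symmetric chord diagram. First I would reduce to a convenient representative: since the number of valid pointer contexts is an invariant of the contraction to $M_{2n,k}$ and is preserved by the cyclic-shift and translation maps $C_a, T_b$, I may replace $\pi$ by the length-$(2n-1)$ permutation $\rho$ with $\rho(k) = 1 + (k-1)d$, equipped with the wrap-around pointer $(2n-1,1)$. Writing $N = 2n-1$, the hypothesis $\gcd(N,d)=1$ is exactly what makes $\rho$ a permutation, and inverting the arithmetic progression gives $\rho^{-1}(v) = 1 + (v-1)d^{-1} \pmod N$, where $d^{-1}$ is the inverse of $d$ in $\mathbb Z_N$.

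Next I would record the positions of the two occurrences of each pointer in the (now cyclic) pointer word, which has $2N$ slots: slot $2k-1$ carries $L(\rho(k))$ and slot $2k$ carries $R(\rho(k))$. The pointer $v^{\ast}=(v,v+1)$ appears as the right pointer of $v$, in the even slot $2\rho^{-1}(v)$, and as the left pointer of $v+1$, in the odd slot $2\rho^{-1}(v+1)-1 = 2\rho^{-1}(v) + 2d^{-1}-1$. The decisive observation is that, because $\rho$ has constant difference, every pointer yields a chord of the same signed length $\ell = 2d^{-1}-1 \pmod{2N}$, the even endpoints exhaust all even slots, and rotation of the circle by $2d^{-1}$ carries the chord of $v^{\ast}$ to that of $(v+1)^{\ast}$. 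Thus the pointers form a perfect matching of $2N$ points by $N$ parallel equal-length chords, and a pair of pointers is a valid pointer context precisely when the corresponding chords cross (their four occurrences alternate around the circle).

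I would then exploit the rotational symmetry: every chord crosses the same number $c$ of the others, so the total number of valid pointer contexts is $Nc/2$. To compute $c$, fix one chord and count the chords that place exactly one endpoint in the open arc of length $\ell$ it cuts off. Parametrizing the other chord by the label difference $\delta \in \{1,\dots,N-1\}$, its first endpoint lies in that arc iff $\delta \in I_1 := \{1,\dots,d^{-1}-1\}$ and its second endpoint lies in that arc iff $\delta \in I_2 := \{N-d^{-1}+1,\dots,N-1\}$; a crossing occurs iff exactly one of these holds, i.e. $\delta \in I_1 \mathbin{\triangle} I_2$, so $c = |I_1 \mathbin{\triangle} I_2|$.

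The main obstacle — and the source of the minimum in the statement — is the case distinction hidden in $|I_1 \mathbin{\triangle} I_2|$. When $d^{-1} \le n$ the chord length satisfies $\ell \le N$, no chord can nest inside an arc of length $\ell$, the sets $I_1, I_2$ are disjoint, and $c = 2(d^{-1}-1)$. When $d^{-1} \ge n+1$ the chords are ``long'' ($\ell > N$): some chords lie entirely inside the arc, contributing both endpoints (hence no crossing), which is reflected by $I_1 \cap I_2 \neq \emptyset$ and gives $c = 2(d^{-1}-1) - 2(2d^{-1}-N-1) = 2(N-d^{-1})$. Both cases combine to $c = 2\min(d^{-1}-1,\, N-d^{-1})$, whence the number of valid pointer contexts is $\tfrac{N}{2}\cdot 2\min(d^{-1}-1,\,N-d^{-1}) = (2n-1)\min\!\big(d^{-1}-1,\,2n-1-d^{-1}\big)$. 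The delicate bookkeeping is entirely in the long-chord regime, where one must be careful that two endpoints of a single chord landing in the arc signals nesting rather than a crossing.
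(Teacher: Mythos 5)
Your proof is correct and is essentially the paper's argument in different clothing: both use the translation/cyclic symmetry of a constant-difference permutation to conclude that every pointer is compatible with the same number of others, compute that common degree by a local count at a single pointer, and finish with the handshake identity, total $=\tfrac{1}{2}(2n-1)\cdot(\text{common degree})$. Your chord-crossing computation of the degree via $\lvert I_1 \mathbin{\triangle} I_2\rvert$, split into the regimes $d^{-1}\le n$ and $d^{-1}\ge n+1$, is a more explicit rendering of the paper's terser claim that each pointer lying between the two occurrences of a fixed pointer appears there exactly once (the paper's two cases, governed by whether $p+1$ or $p-1$ occurs first, coincide with yours), so the nesting bookkeeping you highlight in the long-chord regime is precisely what the paper glosses over.
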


\begin{proof}
Let $p=\pi(1)$. We have $\pi^{-1}(p+1)-1=d^{-1}$ and $\pi^{-1}(p-1)=-d^{-1}. $

First consider the case where $\pi^{-1}(p+1)<\pi^{-1}(p-1)$. Each pointer appearing between $p$ and $p+1$ occurs exactly once, so $p^*=(p,p+1)$ is compatible with $2(d^{-1}-1)$ pointers. If $q\in\mathbb{Z}_{2n-1}$, then we can cyclically shift so that $\pi(1)=q$ without changing the difference sequence. Thus, $q^*=(q,q+1)$ is also compatible with $2(d^{-1}-1)$ pointers. Thus, the total number of available valid pointer contexts is\[\frac{1}{2}((2n-1)\cdot 2(d^{-1}-1)=(2n-1)(d^{-1}-1).\]

Now, consider the case where $\pi^{-1}(p-1)<\pi^{-1}(p+1)$. Each pointer appearing between $p$ and $p-1$ occurs exactly once, so $(p-1,p)$ is compatible with $2(2n-1-d^{-1})$ other pointers. Using an appropriate cyclic shift, we can show that for all $q\in\mathbb{Z}_{2n-1}$, $(q-1,q)$ is compatible with $2(2n-1-d^{-1})$ other pointers. Thus, the total number of available valid pointer contexts is \\ \[\frac{1}{2}(2n-1\cdot 2(2n-1-d^{-1}))=(2n-1)(2n-1-d^{-1}).\]

\end{proof}

\begin{corollary}
Let $\pi$ be a member of $S_{2n}$, where $p = 2n - 1$ is an odd prime number. 
\begin{enumerate}
\item{For $k \in [(2n - 4)/2]$ there are exactly $2p$ permutations with constant difference sequence, maximal strategic pile, and $kp$ valid pointer contexts.}
\item{There are exactly $p$ permutations with constant difference sequence, maximal strategic pile, and $\binom{p}{2}$ valid pointer contexts.}
\item{All other permutations in $S_{2n}$ with max strategic pile have orbit size $p^2$.}
\end{enumerate}
\end{corollary}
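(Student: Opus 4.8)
The plan is to exploit the primality of $p = 2n-1$ to collapse the possible periods of the (reduced) difference sequence, and then read the three items off the orbit-size computation together with the valid-pointer-context count of the preceding theorem. Recall that a permutation in $S_{2n}$ with maximal strategic pile contracts to a reduced permutation of length exactly $2n-1 = p$, and that this contraction is a bijection onto $M_{2n}$ preserving the number of valid pointer contexts; all difference-sequence notions below are applied to the reduced permutation. First I would observe that any nontrivial period of a length-$p$ difference sequence is a proper divisor of $p$, so by primality the only possibility is period $1$, i.e. a constant difference sequence. Hence every permutation with maximal strategic pile is either of constant difference sequence (period $1$) or has a difference sequence that is not periodic. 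By the orbit-size computation (orbit size $(2n-1)\cdot p'$ when the difference sequence has period $p'$, and $(2n-1)^2$ otherwise), the former class has orbits of size $(2n-1)\cdot 1 = p$ and the latter orbits of size $(2n-1)^2 = p^2$. This already gives item (3): every permutation with maximal strategic pile that is not of constant difference sequence is non-periodic, hence has orbit size $p^2$.

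For items (1) and (2) I would enumerate the constant-difference-sequence permutations with maximal strategic pile. Specializing the characterization of maximal strategic pile for periodic difference sequences to period $1$, a constant difference sequence with value $d$ (necessarily $\gcd(d,p)=1$ to define a permutation) has maximal strategic pile exactly when $d-1$ has order $p$ in $\mathbb{Z}_p$, i.e. when $\gcd(d-1,p)=1$; since $p$ is prime these are precisely the values $d\in\{2,3,\dots,p-1\}$, of which there are $\psi(p)=p-2$. For each such $d$ the permutations of constant difference $d$ form a single $\phi'$-orbit of size $p$ (there are exactly $p$ permutations with a given difference sequence and they differ by translations, which lie in one $\phi'$-orbit), and distinct $d$ give disjoint orbits. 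Because $\phi'$ preserves the number of valid pointer contexts, I can compute that number once per orbit from the preceding theorem: it equals $p\cdot\min\!\left(d^{-1}-1,\;p-d^{-1}\right)$, with $d^{-1}$ taken in $\{1,\dots,p-1\}$.

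The core of the argument is then an elementary counting lemma for the function $e\mapsto\min(e-1,\,p-e)$ as $e=d^{-1}$ ranges over $\{2,\dots,p-1\}$ (inversion being a bijection of $\mathbb{Z}_p^*$ fixing $1$). I would show its maximal value $(p-1)/2$ is attained only at $e=(p+1)/2$, producing $p\cdot(p-1)/2=\binom{p}{2}$ valid pointer contexts from a single orbit, hence exactly the $p$ permutations of item (2). For each $j\in\{1,\dots,(p-3)/2\}$ the equation $\min(e-1,p-e)=j$ has precisely the two solutions $e=j+1$ and $e=p-j$, both in $\{2,\dots,p-1\}$ and distinct, yielding two orbits and thus $2p$ permutations with $pj$ valid pointer contexts; since $(2n-4)/2=(p-3)/2$, putting $k=j$ gives item (1). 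A consistency check confirms the accounting: the $(p-3)/2$ values of (1) contribute $2p$ each and (2) contributes $p$, totalling $(p-2)p=\psi(p)\cdot p$, matching the count of constant-difference permutations with maximal strategic pile.

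I expect the main obstacle to be the bookkeeping in this counting lemma: verifying that the two preimages $e=j+1$ and $e=p-j$ are distinct and both land in the admissible range for every $j$ in the stated interval (so that no double-counting occurs), that the extremal value has a unique preimage, and that the index $k$ of the statement is correctly aligned with the quantity $kp$ of valid pointer contexts. These are parity and range arguments that are routine but must be carried out carefully; a secondary point demanding care is the use of the contraction bijection to transfer the orbit count of reduced permutations in $M_{2n}$ to the count of permutations in $S_{2n}$ with maximal strategic pile asserted in the statement.
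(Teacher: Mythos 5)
Your proposal is correct and follows essentially the same route as the paper's own (very terse) proof: primality of $p=2n-1$ forces the reduced difference sequence to be either constant or non-periodic, the orbit-stabilizer computation gives orbit sizes $p$ and $p^2$ respectively, and items (1)--(2) come from counting solutions of $\min\bigl(d^{-1}-1,\,p-d^{-1}\bigr)=k$, which has two solutions for $k\in[(p-3)/2]$ and a unique one at $k=(p-1)/2$. Your write-up actually supplies details the paper elides (the $\gcd(d,p)=\gcd(d-1,p)=1$ criterion, the one-orbit-per-$d$ argument, and the correct range $[(2n-4)/2]$ where the paper's proof misprints $[(2n-1)/4]$), so it is a sound, fleshed-out version of the same argument.
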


\begin{proof}
When $\pi \in S_{2n}$ has nonperiodic difference sequence, its orbit under $\phi'$ is of order $(2n - 1)^2$. The equation $k=\min\left(d^{-1}-1,2n-1-d^{-1}\right)$ has two solutions for $k\in [(2n-1)/4]$ and one solution for $k=\binom{2n-1}{2}$.
\end{proof}

\begin{theorem} Let $\pi\in M_{2n}$ have a difference sequence with period $p$. Then, the number of available valid pointer contexts is a multiple of $\frac{2n-1}{p}$.
\end{theorem}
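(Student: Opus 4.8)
The plan is to convert the nontrivial symmetry that $\pi$ enjoys because of its periodic difference sequence into a free group action on the set of valid pointer contexts, and then read off the divisibility from an orbit count. Recall from the results on $\operatorname{Stab}(\pi)$ proved above that, since $D_\pi$ has period $p$, the stabilizer of $\pi$ under $\phi'$ is cyclic of order $\frac{2n-1}{p}$, generated by $g=(p,\pi(p)-\pi(0))=C_p\circ T_b$ with $b=\pi(p)-\pi(0)$. The whole proof rests on interpreting this algebraic fact geometrically. Because $\pi$ is a reduced element of $M_{2n}$ of length $2n-1$, with the wrap-around pointer $(2n-1,1)$ closing up the word, the pointer word $W(\pi)$ is a genuinely cyclic double-occurrence word on $2(2n-1)$ slots (two per entry), in which each of the $2n-1$ pointers appears exactly twice as a chord. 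The key observation is that the identity $g(\pi)(x)=\pi(x-p)+b=\pi(x)$ says precisely that shifting entry-positions by $p$ and relabelling every pointer $k^\ast\mapsto(k+b)^\ast$ leaves this cyclic word invariant.

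First I would make this into an honest rotation $\rho$ of the chord-diagram circle: shifting entry-positions by $p$ means shifting pointer-slots by $2p$, and invariance of $W(\pi)$ under $g$ guarantees that $\rho$ carries the two occurrences of each pointer onto the two occurrences of another pointer, i.e. $\rho$ is a bijection of the circle mapping chords to chords. Its order is $\frac{2(2n-1)}{\gcd(2p,\,2(2n-1))}=\frac{2n-1}{p}$, using $p\mid 2n-1$. Next I would note that whether a pair of pointers forms a valid pointer context depends only on the cyclic interleaving of their four occurrences, and a rotation preserves cyclic order; hence $\rho$ preserves the crossing relation and therefore permutes the set $\mathcal V$ of valid pointer contexts of $\pi$. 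At this point $\langle\rho\rangle$ is a cyclic group of order $\frac{2n-1}{p}$ acting on $\mathcal V$, so it remains only to show the action is free.

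The crux is freeness, and it is exactly here that oddness enters: since $2n-1$ is odd and $p\mid 2n-1$, the order $N:=\frac{2n-1}{p}$ is odd. For $0<j<N$ the power $\rho^{j}$ is a nontrivial rotation whose order $N/\gcd(j,N)$ divides $N$ and is therefore odd, so all of its cycles on the $2(2n-1)$ slots have odd length; in particular $\rho^{j}$ fixes no slot and interchanges no two slots. Consequently $\rho^{j}$ can fix no chord (a chord could be preserved only by fixing both endpoints or swapping them, both impossible), so it moves every pointer. A crossing pair $\{c_1,c_2\}\in\mathcal V$ fixed by $\rho^{j}$ would then force $\rho^{j}c_1=c_2$ and $\rho^{j}c_2=c_1$, whence $\rho^{2j}c_1=c_1$; but $2j\not\equiv 0\pmod N$ for $0<j<N$ because $N$ is odd, so $\rho^{2j}$ is again a nontrivial rotation fixing a chord, a contradiction. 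Thus $\langle\rho\rangle$ acts freely on $\mathcal V$, every orbit has size $N=\frac{2n-1}{p}$, and therefore $\abs{\mathcal V}$ is a multiple of $\frac{2n-1}{p}$, as claimed.

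I expect the main obstacle to be the first, bookkeeping step: rigorously verifying that the simultaneous position-shift-by-$p$ and pointer-relabel-by-$b$ encoded in $g$ really does descend to a well-defined rotation of the cyclic pointer word (including the wrap-around pointer), and that its order is exactly $\frac{2n-1}{p}$ rather than the possibly smaller order of the label-translation $k^\ast\mapsto(k+b)^\ast$. Once the rotation and its order are pinned down, the invariance of crossings is immediate and the freeness argument is driven entirely by the parity fact that $\frac{2n-1}{p}$ is odd.
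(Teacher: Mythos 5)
Your proof is correct and takes essentially the same route as the paper's: the paper acts on valid pointer contexts by the label translation $k^*\mapsto (k+K)^*$ with $K=\pi(p+1)-\pi(1)$ of order $\frac{2n-1}{p}$ in $\mathbb{Z}_{2n-1}$, which is exactly the symmetry your rotation $\rho$ of the cyclic pointer word encodes (since $\pi(x+p)=\pi(x)+K$, position-shift by $p$ and label-translation by $K$ coincide). The only difference is completeness: the paper asserts compatibility-preservation and concludes with ``the result follows,'' leaving implicit the freeness/orbit-size argument that you make explicit via the oddness of $\frac{2n-1}{p}$.
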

\begin{proof} Let $K=\pi(p+1)-\pi(1)$. The pointers $(p, p+1)$ and $(q, q+1)$ are compatible if and only if $(p+K, p+K+1)$ is compatible with $(q+K, q+K+1)$. Since the order of $K\mod 2n-1$ is $\frac{2 n-1}{p}$, the result follows.
\end{proof}

\begin{corollary} If $k$ is relatively prime to $2n-1$, then $|M_{2n,k}|=0\bmod (2n-1)^2$.
\end{corollary}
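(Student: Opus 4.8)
The plan is to exploit the orbit decomposition of $M_{2n,k}$ under the action $\phi'$ together with the preceding theorem, which bounds the number of valid pointer contexts in terms of periodicity. Since $M_{2n,k}$ is the disjoint union of its $\phi'$-orbits, it suffices to show that whenever $\gcd(k,2n-1)=1$ every such orbit has size exactly $(2n-1)^2$; the total $|M_{2n,k}|$ is then a sum of terms each divisible by $(2n-1)^2$, which gives the congruence.

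First I would record that the minimal period of the difference sequence is constant on $\phi'$-orbits. By the remark that translations preserve the difference sequence, $T_b$ leaves $D_\pi$ unchanged, and by the cyclic-shift computation in the earlier stabilizer lemma, $C_a$ sends $D_\pi$ to $D_\pi(k-a)$. A cyclic shift of a length-$(2n-1)$ cyclic sequence has the same minimal period, so all elements of one orbit share a single period $p$. Moreover, since the set of $p$ with $D_\pi(k+p)=D_\pi(k)$ for all $k$ is a subgroup of $\mathbb{Z}_{2n-1}$, the minimal period $p$ divides $2n-1$. By the earlier orbit-size computation, the orbit of any $\pi$ with period $p$ has size $(2n-1)p$, where the nonperiodic case is exactly the full-period case $p=2n-1$; so it remains only to rule out proper periods $p<2n-1$.

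This is where the preceding theorem enters. Suppose, toward a contradiction, that some $\pi\in M_{2n,k}$ has difference-sequence period $p$ with $p\mid 2n-1$ and $p<2n-1$, so that $\tfrac{2n-1}{p}>1$. That theorem asserts the number of valid pointer contexts of $\pi$, which is exactly $k$, is a multiple of $\tfrac{2n-1}{p}$. Hence $\tfrac{2n-1}{p}$ divides both $k$ and $2n-1$, forcing $\gcd(k,2n-1)\ge \tfrac{2n-1}{p}>1$ and contradicting the hypothesis. Therefore every element of $M_{2n,k}$ has period $2n-1$, every $\phi'$-orbit has size $(2n-1)^2$, and summing over orbits yields $|M_{2n,k}|\equiv 0 \pmod{(2n-1)^2}$.

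The step warranting the most care, and the main obstacle, is the bookkeeping tying the three ingredients together: confirming that the period is genuinely an orbit invariant (so the size $(2n-1)p$ is well defined on an orbit), that the minimal period divides $2n-1$ so that $\tfrac{2n-1}{p}$ is an integer strictly greater than $1$ in the proper-period case, and that the "nonperiodic" branch of the orbit-size computation coincides with $p=2n-1$. Once these identifications are pinned down, the divisibility deduction is immediate.
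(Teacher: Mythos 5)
Your proposal is correct and takes essentially the same route as the paper's own (much terser) proof: the paper likewise invokes the preceding theorem to conclude that when $\gcd(k,2n-1)=1$ no element of $M_{2n,k}$ can have a periodic difference sequence, hence every $\phi'$-orbit in $M_{2n,k}$ has size $(2n-1)^2$ and the count is divisible by $(2n-1)^2$. The bookkeeping you flag (the minimal period dividing $2n-1$, invariance of the period along orbits, and identifying the nonperiodic case with full period) is sound detail that the paper leaves implicit.
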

\begin{proof} By the previous theorem, no element of $M_{2n,k}$ can have a periodic difference sequence, so every element of $M_{2n,k}$ has orbit size $(2n-1)^2$.
\end{proof}

\subsection*{Characterizing $M_{2n,k}$ for $k=\binom{2n-1}{2}$}

\begin{proposition} The permutation $[2\ 4\ \dots\ 2n-2\ 2n\  1\ 3\ \dots\ 2n-1]$ has maximal strategic pile and $\binom{2n-1}{2}$ valid pointer contexts. 
\end{proposition}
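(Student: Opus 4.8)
The statement bundles two independent claims about $\pi = [\,2\ 4\ \cdots\ 2n\ 1\ 3\ \cdots\ 2n-1\,]$: that $\abs{\text{SP}(\pi)} = 2n-1$ (maximality, since $2n$ is even), and that every one of the $\binom{2n-1}{2}$ pairs of pointers is a valid context. I would prove them separately, throughout using the explicit description $\pi(j) = 2j$ for $1 \le j \le n$ and $\pi(n+j) = 2j-1$ for $1 \le j \le n$, together with the inverse data $\pi^{-1}(2j) = j$ and $\pi^{-1}(2j-1) = n+j$.

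For the pointer-context count, the plan is to write out $W(\pi)$ entry by entry and show it equals $[\,1^*\ 2^*\ \cdots\ (2n-1)^*\ 1^*\ 2^*\ \cdots\ (2n-1)^*\,]$, i.e.\ each pointer $i^*$ occurs exactly once among the first $2n-1$ slots and once among the last $2n-1$ slots, in increasing order within each half. Concretely, the even block $2,4,\dots,2n$ contributes the left/right pairs $1^*2^*,\,3^*4^*,\dots$ ending in the lone $(2n-1)^*$ once the removed boundary pointer $(2n,2n+1)$ at the entry $2n$ is accounted for, while the odd block $1,3,\dots,2n-1$ contributes the lone $1^*$ (its left pointer $(0,1)$ being removed) followed by $2^*3^*,\,4^*5^*,\dots,(2n-2)^*(2n-1)^*$. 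The only delicate bookkeeping is exactly these two boundary deletions, since they are what turn each half into a clean increasing run. Granting the form of $W(\pi)$, any two pointers $p^*,q^*$ with $p<q$ sit at positions $p < q \le 2n-1 < (2n-1)+p < (2n-1)+q$, hence in the interleaved order $p^*\,q^*\,p^*\,q^*$; by definition $(p^*,q^*)$ is then a valid pointer context. As this holds for every unordered pair, all $\binom{2n-1}{2}$ pairs are valid.

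For maximality of the strategic pile, I would compute $C_\pi = Y_\pi \circ X_{2n}$ in closed form. Reading $Y_\pi$ off its cycle gives $Y_\pi(1) = 2n$, $Y_\pi(2) = 0$, $Y_\pi(0) = 2n-1$, and $Y_\pi(y) = y-2$ for every other $y$. Since $C_\pi(x) = Y_\pi(x+1)$ for $0 \le x < 2n$ and $C_\pi(2n) = Y_\pi(0)$, the even/odd parity cases collapse to the single statement $C_\pi(0) = 2n$ and $C_\pi(x) = x-1$ for $1 \le x \le 2n$. Thus $C_\pi$ is the single $(2n+1)$-cycle $(\,0\ \ 2n\ \ (2n-1)\ \cdots\ 2\ \ 1\,)$, in which $0$ and $2n$ lie in one cycle and every element of $\{1,\dots,2n-1\}$ appears after $2n$ and before $0$. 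Hence $\text{SP}(\pi) = \{1,\dots,2n-1\}$ has size $2n-1$, which is maximal.

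The genuinely error-prone step is the closed-form evaluation of $C_\pi$: the inverse-position lookups combined with the even/odd parity split are where an index shift could slip in, so I would verify it against the instance $n=3$, $\pi = [\,2\ 4\ 6\ 1\ 3\ 5\,]$, where the computation yields $C_\pi = (\,0\ 6\ 5\ 4\ 3\ 2\ 1\,)$ and $W(\pi) = [\,1^*\,2^*\,3^*\,4^*\,5^*\,1^*\,2^*\,3^*\,4^*\,5^*\,]$. As an independent cross-check on the pointer count, the reduction $\pi' = [\,2\ 4\ \cdots\ 2n-2\ 1\ 3\ \cdots\ 2n-1\,] \in S_{2n-1}$ has constant difference sequence with value $d=2$; since $\gcd(2n-1,2)=1$ and $d^{-1} \equiv n \pmod{2n-1}$, Theorem~4.5 gives $(2n-1)\min(n-1,\,n-1) = (2n-1)(n-1) = \binom{2n-1}{2}$ valid contexts, matching the direct count.
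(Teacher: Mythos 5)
Your proof is correct. The paper states this proposition without any proof, so there is nothing to compare against: your direct verification --- writing out $W(\pi)=[\,1^*\,2^*\cdots(2n-1)^*\;1^*\,2^*\cdots(2n-1)^*\,]$ so that every pair of pointers interleaves as $p^*\,q^*\,p^*\,q^*$, and computing $C_\pi=(0\ \,2n\ \,2n-1\ \cdots\ 2\ \,1)$ so that $\SP(\pi)=\{1,\dots,2n-1\}$ --- is precisely the routine computation the authors leave implicit, and both calculations check out (they agree with the paper's Example 2.14 for $n=3$). One small slip: your cross-check cites ``Theorem 4.5,'' but the constant-difference-sequence count is Theorem 4.3 of the paper (the proposition under discussion is itself the item numbered 4.5); the cross-check itself is sound, since $d=2$ gives $d^{-1}=n$ in $\mathbb{Z}_{2n-1}$ and $(2n-1)(n-1)=\binom{2n-1}{2}$, and contraction to $S_{2n-1}$ preserves the number of valid pointer contexts.
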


\begin{lemma} If $\pi\in S_{2n}$ has a universal pointer $(p,p+1)$, then exactly $2n-2$ pointers appear to the right of the leftmost instance of $(p,p+1)$ and to the left of the rightmost instance of $(p,p+1)$.
\end{lemma} 

\begin{proof} Each pointer other than $(p,p+1)$ must appear exactly once the right of the leftmost instance of $(p,p+1)$ and to the left of the rightmost instance of $(p,p+1)$, since each other pointer is compatible for \textbf{cds} with $\pi$. There are $2n-2$ such pointers.
\end{proof}

\begin{theorem} Every $\pi\in M_{2n,\binom{2n-1}{2}}$, is a cyclic shift of  $[2\ 4\ 6\ \dots\ 2n-2\ 1\ 3\ 5\ \dots\ 2n-1]$. \end{theorem}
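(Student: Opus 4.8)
The plan is to exploit the fact that $\binom{2n-1}{2}$ is the \emph{total} number of unordered pairs among the $2n-1$ pointers of a contracted permutation $\pi\in M_{2n,\binom{2n-1}{2}}$. Since the number of valid pointer contexts is exactly $\binom{2n-1}{2}$, every pair of pointers must form a valid context; equivalently, every pointer of $\pi$ is $\pi$-universal. I would first extract rigid combinatorial structure from the double occurrence word $W(\pi)$, which has length $2(2n-1)$ and in which each of the $2n-1$ pointers occurs exactly twice, and then convert that structure into a statement about the difference sequence of $\pi$.

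For the structural step I would apply the preceding Lemma to each (now universal) pointer $(p,p+1)$: it guarantees that exactly $2n-2$ symbols occur strictly between the two occurrences of $(p,p+1)$ in $W(\pi)$. Hence if those occurrences sit at positions $a<b$, then $b-a-1=2n-2$, so $b-a=2n-1$. Thus \emph{every} pointer has its two occurrences exactly $2n-1$ positions apart. Because $W(\pi)$ has length $2(2n-1)$ and the $2n-1$ pairs $\{i,\,i+(2n-1)\}$ partition its positions, this forces $W(\pi)$ to be periodic of period $2n-1$; that is, $W(\pi)=uu$ where $u$ lists each pointer exactly once.

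Next I would translate this periodicity back to $\pi$. Reading $W(\pi)$ off the permutation, position $2j-1$ is $L(\pi(j))$ and position $2j$ is $R(\pi(j))$. The equalities $W(\pi)(i)=W(\pi)(i+2n-1)$ then split by the parity of $i$ into the pointer identities $L(\pi(j))=R(\pi(j+n-1))$ and $R(\pi(j))=L(\pi(j+n))$, which unwind (comparing first coordinates) to $\pi(j+n-1)=\pi(j)-1$ and $\pi(j+n)=\pi(j)+1$ on their respective index ranges. Reconciling the two ranges cyclically in $\mathbb Z_{2n-1}$ yields $\pi(j+n)\equiv\pi(j)+1$ for \emph{every} $j$; iterating once and using $2n\equiv 1\pmod{2n-1}$ gives $\pi(j+1)=\pi(j)+2$. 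Hence $\pi$ has constant difference sequence with value $2$.

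Finally, a permutation of $S_{2n-1}$ with constant difference $2$ is exactly $\pi(j)=c+2(j-1)$ for some $c\in\mathbb Z_{2n-1}$; as $\gcd(2,2n-1)=1$, these are precisely the cyclic shifts (equivalently, the translations) of $[2\ 4\ \dots\ 2n-2\ 1\ 3\ \dots\ 2n-1]$. Each such permutation does lie in $M_{2n,\binom{2n-1}{2}}$ by the Proposition together with the group-action theorem, which closes the characterization in both directions. I expect the main obstacle to be the index bookkeeping of the third paragraph: keeping the $L/R$ off-by-one, the two index ranges $1\le j\le n$ and $1\le j\le n-1$, and the reductions modulo $2n-1$ straight, so that the two one-sided relations genuinely combine into the single cyclic relation $\pi(j+n)\equiv\pi(j)+1$.
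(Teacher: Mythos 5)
Your proof is correct and follows essentially the same route as the paper's: both deduce from the preceding lemma that every (now universal) pointer has exactly $2n-2$ pointers between its two occurrences, conclude that $\pi$ has constant difference sequence $2$, and identify such permutations as the cyclic shifts of $[2\ 4\ \dots\ 2n-2\ 1\ 3\ \dots\ 2n-1]$. The only difference is bookkeeping: you extract the relation $\pi(j+n)\equiv\pi(j)+1$ from the period-$(2n-1)$ structure of the double occurrence word $W(\pi)$, while the paper obtains the same relation by cyclically shifting $\pi$ so that $\pi(1)=k$ and applying the lemma twice.
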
 
\begin{proof} We will show that $\pi$ has constant difference sequence $2$. Let $k\in [2n-1]$, and cyclically shift $\pi$ so that $\pi(1)=k$. Then, since exactly $2n-2$ pointers must appear to the right of the leftmost instance of $(k,k+1)$ and to the left of the rightmost instance of $(k,k+1)$, we have $\pi(n+1)=k+1$. Since exactly $2n-2$ pointers must appear to the right of the leftmost instance of $(k+1,k+2)$ and to the left of the rightmost instance of $(k+1,k+2)$, we have $\pi(2)=k+2$, as desired.
\end{proof}

\begin{corollary} $|M_{2n,\binom{2n-1}{2}}|=2n-1$.
\end{corollary}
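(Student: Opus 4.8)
The plan is to upgrade the preceding theorem from a one-sided containment into an exact count. That theorem already shows that every element of $M_{2n,\binom{2n-1}{2}}$ is a cyclic shift of the fixed permutation $\sigma = [\,2\ 4\ \cdots\ 2n-2\ 1\ 3\ \cdots\ 2n-1\,]\in S_{2n-1}$, so $M_{2n,\binom{2n-1}{2}}$ is contained in the set of cyclic shifts of $\sigma$. First I would establish the reverse containment. The proposition preceding the theorem shows that the length-$2n$ permutation $[\,2\ 4\ \cdots\ 2n-2\ 2n\ 1\ 3\ \cdots\ 2n-1\,]$ has maximal strategic pile and $\binom{2n-1}{2}$ valid pointer contexts, and its contraction is exactly $\sigma$, so $\sigma\in M_{2n,\binom{2n-1}{2}}$. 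By the group-action theorem of Section 3 the map $\phi'$, and in particular each cyclic shift $C_a$, sends $M_{2n,k}$ into itself; hence every cyclic shift of $\sigma$ again lies in $M_{2n,\binom{2n-1}{2}}$. Combining the two inclusions gives the set equality $M_{2n,\binom{2n-1}{2}} = \{\,C_a(\sigma) : a \in \mathbb Z_{2n-1}\,\}$.

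It then remains to count the distinct cyclic shifts of $\sigma$. Since $\sigma \in S_{2n-1}$ is a bijection, an equality $C_a(\sigma) = C_b(\sigma)$ unwinds to $\sigma(x - a) = \sigma(x - b)$ for every $x$, and injectivity of $\sigma$ forces $a \equiv b \pmod{2n-1}$. Thus the $2n-1$ shifts $C_0(\sigma),\,C_1(\sigma),\,\ldots,\,C_{2n-2}(\sigma)$ are pairwise distinct, and therefore $|M_{2n,\binom{2n-1}{2}}| = 2n-1$.

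An alternative route to the same count runs through the orbit machinery of Section 3. As verified inside the proof of the preceding theorem, $\sigma$ has constant difference sequence, i.e.\ period $p = 1$, so the orbit–stabilizer corollary of Section 3 gives $|\mathcal O_\sigma| = (2n-1)\cdot 1 = 2n-1$. Moreover, because the common difference $d=2$ is a unit in $\mathbb Z_{2n-1}$ (as $2n-1$ is odd) and $T_d(\sigma) = C_{-1}(\sigma)$ for a constant-difference permutation, every translation of $\sigma$ is itself a cyclic shift; hence the full $\phi'$-orbit of $\sigma$ coincides with the set of its cyclic shifts, and the two computations agree. I expect no serious obstacle here, since the statement is essentially a bookkeeping consequence of the theorem; the only point requiring genuine care is the reverse containment, which relies on invoking the group-action theorem to certify that cyclic shifting alters neither maximality of the strategic pile nor the valid-pointer-context count.
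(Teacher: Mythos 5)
Your proposal is correct and follows exactly the route the paper intends: the paper leaves this corollary without an explicit proof because it is the combination of the preceding theorem (containment in the cyclic shifts), the preceding proposition together with the Section 3 group-action theorem (the reverse containment), and the observation that the $2n-1$ cyclic shifts are pairwise distinct. Your alternative orbit--stabilizer computation is a nice consistency check but adds nothing beyond the direct count.
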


\subsection*{Characterizing $M_{2n,k}$ for $k=\binom{2n-1}{2}-4$}

\begin{proposition} For $n\geq 3$, the permutation $[4\ 2\ 6\ \dots\ 2n-2\ 2n\  1\ 3\ 5\ \dots\ 2n-1]$ has maximal strategic pile and $\binom{2n-1}{2}-4$ valid pointer contexts and a non-periodic difference sequence. Thus, there are at least $(2n-1)^2$ permutations in $M_{2n,\binom{2n-1}{2}-4}$.
\end{proposition}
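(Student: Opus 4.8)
The plan is to verify the three asserted properties of $\pi=[4\ 2\ 6\ \cdots\ 2n-2\ 2n\ 1\ 3\ \cdots\ 2n-1]$ one at a time and then read off the count from the orbit--stabilizer corollary. Throughout I record $\pi(1)=4$, $\pi(2)=2$, $\pi(k)=2k$ for $3\le k\le n$, and $\pi(n+j)=2j-1$ for $1\le j\le n$; I also let $\pi'=[4\ 2\ 6\ \cdots\ 2n-2\ 1\ 3\ \cdots\ 2n-1]\in S_{2n-1}$ denote the reduction obtained by contracting the forced block $2n\,1$, since the group action $\phi'$ and the orbit corollary live on $M_{2n,k}\subseteq S_{2n-1}$.

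For the strategic pile I would compute the cycle of $C_\pi=Y_\pi\circ X_{2n}$ containing $0$ directly. Using $C_\pi(x)=Y_\pi(x+1)$ and the fact that $Y_\pi$ sends each value to the entry immediately to its left in $\pi$ (with $\pi(1)\mapsto 0$ and $0\mapsto\pi(2n)$), one tabulates $C_\pi$ from the one-line form, finding $C_\pi(x)=x-1$ away from a few boundary values $0,1,2,3,5,2n$. Tracing the orbit of $0$ then yields the single cycle $(0,\,2n,\,2n-1,\,\dots,\,6,\,5,\,2,\,1,\,4,\,3)$ of length $2n+1$. Hence $0$ and $2n$ share a cycle with all of $1,\dots,2n-1$ strictly between $2n$ and $0$, so $|\text{SP}(\pi)|=2n-1$ and $\pi$ has maximal strategic pile.

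The heart of the argument, and the step I expect to be most delicate, is the pointer count. The key structural observation is that consecutive integers $k,k+1$ have opposite parity, so the pointer $k^*=(k,k+1)$ is contributed once by an even entry and once by an odd entry; since $\pi$ lists all even entries before all odd entries, every pointer occurs exactly once in the even-block prefix of $W(\pi)$ and once in the odd-block suffix. Reading the blocks off gives the even-block order $3^*,4^*,1^*,2^*,5^*,6^*,\dots,(2n-1)^*$ and the odd-block order $1^*,2^*,\dots,(2n-1)^*$. Because all even-block occurrences precede all odd-block occurrences, two pointers interleave (form a valid context) exactly when their relative order agrees in the two blocks, and they are nested (invalid) exactly when it disagrees; thus the number of invalid pairs equals the number of inversions of the even-block sequence relative to natural order. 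That sequence differs from the identity only in the prefix $3,4,1,2$, which contributes precisely the four inversions $\{1,3\},\{1,4\},\{2,3\},\{2,4\}$. Hence there are exactly $4$ invalid pairs, giving $\binom{2n-1}{2}-4$ valid pointer contexts. I would double-check the boundary pointers $1^*$ and $(2n-1)^*$ (whose partners $1$ and $2n$ each have a deleted pointer) and the degenerate small case $n=3$ separately.

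Finally, for the difference sequence I compute $D_{\pi'}=(2n-3,\,4,\,2,\dots,2,\,4)\in\mathbb Z_{2n-1}^{2n-1}$ and note that the value $2n-3\equiv-2$ occurs exactly once, being distinct from $2$ and $4$ modulo $2n-1$ for $n\ge 3$; any nontrivial period $p$ would force $D_{\pi'}(1)=D_{\pi'}(1+p)$ with $1+p\not\equiv1$, making $-2$ appear at least twice, a contradiction, so $D_{\pi'}$ is non-periodic (the same single-occurrence argument applies to $D_\pi$). By the orbit--stabilizer corollary the orbit of $\pi'$ under $\phi'$ then has order $(2n-1)^2$, and since $\phi'$ maps $M_{2n,k}$ into itself this entire orbit lies in $M_{2n,\binom{2n-1}{2}-4}$, yielding $|M_{2n,\binom{2n-1}{2}-4}|\ge (2n-1)^2$.
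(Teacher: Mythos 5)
The paper states this proposition without proof, so there is no argument of record to compare yours against; your proposal is correct and supplies the missing verification, and its final counting step invokes exactly the machinery (the theorem that $\phi'$ maps $M_{2n,k}$ into itself plus the orbit--stabilizer corollary) that the paper's ``Thus'' implicitly relies on. All three verifications check out. The cycle of $C_\pi$ through $0$ is indeed $(0\ 2n\ 2n-1\ \cdots\ 6\ 5\ 2\ 1\ 4\ 3)$, giving $|\SP(\pi)|=2n-1$. The pointer count is the delicate claim, and your parity argument handles it cleanly: each pointer occurs once among the even entries and once among the odd entries, so a pair is invalid exactly when its relative order disagrees between the two blocks, and since the odd block is naturally ordered the invalid pairs are precisely the inversions of the even-block sequence $3^*,4^*,1^*,2^*,5^*,\dots,(2n-1)^*$, namely $\{1^*,3^*\},\{1^*,4^*\},\{2^*,3^*\},\{2^*,4^*\}$; I confirmed this against $n=3$, where $[4\ 2\ 6\ 1\ 3\ 5]$ has exactly the six valid contexts predicted by $\binom{5}{2}-4$. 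The non-periodicity of $D_{\pi'}=(2n-3,4,2,\dots,2,4)$ also holds as you argue: $2n-3\not\equiv 2$ and $2n-3\not\equiv 4 \pmod{2n-1}$ (equivalently, $2n-1$ divides neither $4$ nor $6$, true for odd $2n-1\geq 5$), so the value $2n-3$ occurs exactly once and no nontrivial cyclic period can exist. You were also right to run the orbit argument on the reduction $\pi'\in S_{2n-1}$ rather than on $\pi$ itself, since $\phi'$ and the orbit corollary operate on $M_{2n,k}\subseteq S_{2n-1}$; this is a mismatch in the proposition's own wording that your write-up resolves correctly. The only slip is the parenthetical claim that the same single-occurrence argument applies to $D_\pi$ of the unreduced permutation: at $n=3$ one has $D_\pi=(4,4,1,2,2,5)$ in $\mathbb{Z}_6$, where your witness $-2\equiv 4$ occurs twice (the sequence is still non-periodic, but via the values $1$ or $5$, which occur once each); this is immaterial to the result, since only $D_{\pi'}$ enters the count.
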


\begin{lemma}\label{evcompat}
For $\pi\in M_{2n}$, each pointer $(p, p+1)$ is compatible with an even number of pointers.
\end{lemma}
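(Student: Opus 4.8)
The plan is to work entirely with the double-occurrence structure of the pointer word together with a single parity count, ignoring any finer properties of the strategic pile. Recall that two pointers are compatible exactly when their two occurrences interleave in $W(\pi)$, so the number of pointers compatible with a fixed pointer $(p,p+1)$ is its degree in the interleaving (interlacement) relation. I will show this degree is always even by reducing it to the parity of the number of symbol-slots lying between the two occurrences of $(p,p+1)$.

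First I would record the key structural fact. Because $\pi\in M_{2n}$ is a contraction, its pointer word is a genuine double-occurrence word over the $2n-1$ pointers $(1,2),(2,3),\dots,(2n-2,2n-1),(2n-1,1)$, with the wrap-around pointer $(2n-1,1)$ supplied precisely so that no pointer is dropped and each pointer occurs exactly twice. Writing $W(\pi)=[\,L(\pi(1))\,R(\pi(1))\,L(\pi(2))\,R(\pi(2))\,\cdots\,]$, the slot in position $2k-1$ is the left pointer $L(\pi(k))$ and the slot in position $2k$ is the right pointer $R(\pi(k))$; so every left pointer sits at an odd word-position and every right pointer at an even word-position. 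Each pointer $(p,p+1)$ occurs once as the right pointer $R(p)$ of the element $p$ and once as the left pointer $L(p+1)$ of the element $p+1$, and since these come from two distinct cells these two occurrences sit at positions of opposite parity. I would check explicitly that the special pointer $(2n-1,1)$ obeys this as well, appearing as $R(2n-1)$ (even) and $L(1)$ (odd).

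Next I would count. Let the two occurrences of $(p,p+1)$ sit at word-positions $a$ and $b$. Since they have opposite parity, $\lvert a-b\rvert$ is odd, so the number $m$ of word-positions lying strictly between them is even. Every such intermediate position holds exactly one occurrence of some other pointer. A pointer $q^*\neq(p,p+1)$ contributes $0$, $1$, or $2$ of its occurrences to this interval, and it is compatible with $(p,p+1)$ exactly when it contributes $1$ (the interleaving case), while contributing $0$ or $2$ yields a disjoint or a nested pair. Summing occurrences over all other pointers gives the identity $m = \#\{q^*:\text{one occurrence inside}\} + 2\cdot\#\{q^*:\text{two occurrences inside}\}$, so the number of compatible pointers, namely $\#\{q^*:\text{one occurrence inside}\}$, is congruent to $m$ modulo $2$ and is therefore even.

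The only real obstacle is the bookkeeping at the interface between the cyclic labeling of the pointers and the linear word on which interleaving is defined: I must be sure that the contraction's wrap-around pointer $(2n-1,1)$ genuinely appears once as a left pointer and once as a right pointer, so that the ``one odd slot, one even slot'' dichotomy holds with \emph{no} exceptional symbol. Once that is verified, the conclusion is a one-line parity identity, and no properties of $M_{2n}$ beyond the existence of the clean double-occurrence pointer word are needed.
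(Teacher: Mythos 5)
Your proof is correct. Both you and the paper ultimately run the same parity count: look at the multiset of pointer occurrences lying strictly between the two occurrences of $(p,p+1)$; it has even size, each compatible pointer contributes exactly one occurrence to it, and every other pointer contributes zero or two, so the number of compatible pointers is even. Where you genuinely differ is in how the evenness of that multiset is established. The paper first cyclically shifts $\pi$ so that $\pi(1)=p$ --- implicitly relying on the Section 3 group action (Theorem 3.3), which guarantees that cyclic shifts of elements of $M_{2n}$ preserve the compatibility relation --- after which the slots between the two occurrences of $(p,p+1)$ are exactly the left/right pointer pairs of the entries between $p$ and $p+1$, an even count. You avoid the shift entirely: in the contracted word every pointer occurs once as a right pointer (even word-position) and once as a left pointer (odd word-position), so the gap between its two occurrences is automatically even, with no normalization needed. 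This makes your argument more self-contained, since it does not lean on the nontrivial fact that the group action preserves valid pointer contexts. Note that both arguments make essential use of the contraction structure of $M_{2n}$: for a general permutation, whose pointer word omits $(0,1)$ and $(n,n+1)$, the lemma is simply false --- e.g.\ for $[3\ 1\ 4\ 2]$ the pointer word is $(2,3)\,(3,4)\,(1,2)\,(3,4)\,(1,2)\,(2,3)$, and $(3,4)$ is compatible with exactly one pointer --- and it is precisely the wrap-around pointer $(2n-1,1)$ that restores the ``each pointer appears once as left, once as right'' parity structure your argument needs. You correctly identified this as the one point requiring verification, and your verification of it is right.
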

\begin{proof} For $p\in [2n-1]$, cyclically shift $\pi$ so that $\pi(1)=p$. Then, since each number occuring after $p$ and before $p+1$ in $\pi$ has two associated pointers, there are an even number of pointers to the right of the leftmost instance of $(p,p+1)$ and to the left of the rightmost instance of $(p,p+1)$. Let $\alpha$ be the multiset of pointers that appear to the right of the leftmost instance of $(p,p+1)$ and to the left of the rightmost instance of $(p,p+1)$. Let $\beta\subset \alpha$ be the set of elements that appear exactly once in $\alpha$. Then, $|\beta|$ is the number of pointers compatible with $(p,p+1)$, and since every other element of $\alpha$ appears twice, $|\beta|$ is even.
\end{proof}

\begin{corollary}\label{evincompat}
For $\pi\in M_{2n}$, each pointer is incompatible with an even number of pointers.
\end{corollary}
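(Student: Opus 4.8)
The plan is to combine Lemma~\ref{evcompat} with a single parity count on the total supply of pointers. First I would fix the pointer inventory of a permutation $\pi \in M_{2n}$. Such a $\pi$ has length $2n-1$, and because it arises from the contraction that identifies the entry $2n$ (sitting directly left of $1$) in a length-$2n$ permutation of maximal strategic pile, it carries exactly $2n-1$ distinct pointers: the standard pointers $(1,2),(2,3),\ldots,(2n-2,2n-1)$ together with the single wraparound pointer $(2n-1,1)$, which is assigned as the left pointer of $1$ and the right pointer of $2n-1$. Consequently every pointer $(p,p+1)$ stands against exactly $2n-2$ other pointers, and each of those is either compatible or incompatible with it.

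Next I would simply split these $2n-2$ other pointers into the compatible ones and the incompatible ones. Lemma~\ref{evcompat} supplies that the number of pointers compatible with $(p,p+1)$ is even. Since $2n-2$ is itself even, the number of incompatible pointers equals $2n-2$ minus an even number, and is therefore even. This is exactly the assertion of the corollary.

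There is no genuine obstacle here beyond correctly bookkeeping the pointers: the only step requiring care is to include the wraparound pointer $(2n-1,1)$ so that the full count is $2n-1$ and hence the number of pointers \emph{other} than $(p,p+1)$ is the even number $2n-2$. Once that is in hand, the evenness of $2n-2$ together with the parity of the compatible count furnished by Lemma~\ref{evcompat} yields the claim immediately.
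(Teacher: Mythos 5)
Your proof is correct and is precisely the argument the paper leaves implicit: the corollary is stated without proof as an immediate consequence of Lemma~\ref{evcompat}, since each of the $2n-1$ pointers (including the wraparound pointer $(2n-1,1)$) faces an even number $2n-2$ of other pointers, and subtracting the even compatible count leaves an even incompatible count. Your careful bookkeeping of the pointer inventory, in particular the wraparound pointer, is exactly the right way to fill in that omitted step.
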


\begin{lemma}
If $\pi\in M_{2n}$ has $\binom{2n-1}{2}-c$ valid pointer contexts for $c>0$ and $a$ non-universal pointers, then $\binom{a}{2}\geq c\geq a$.
\end{lemma}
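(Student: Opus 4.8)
The plan is to encode the incompatibility data of $\pi$ as a graph and read off both inequalities from elementary graph-theoretic facts, with the parity statement of Corollary \ref{evincompat} supplying the lower bound. First I would let $G$ be the graph whose vertices are the $2n-1$ pointers of $\pi$, and which has an edge joining two pointers precisely when they fail to form a valid pointer context. Since the total number of pointer pairs is $\binom{2n-1}{2}$ and $\pi$ has $\binom{2n-1}{2}-c$ valid pointer contexts, the graph $G$ has exactly $c$ edges. A pointer is $\pi$-universal exactly when it is compatible with every other pointer, that is, exactly when it is an isolated vertex of $G$; hence the $a$ non-universal pointers are precisely the $a$ non-isolated vertices of $G$.

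For the upper bound $c \leq \binom{a}{2}$, I would observe that every edge of $G$ has both of its endpoints among the non-isolated vertices. Thus all $c$ edges lie inside the subgraph induced on the $a$ non-universal pointers, and a graph on $a$ vertices has at most $\binom{a}{2}$ edges; this gives $c \leq \binom{a}{2}$ immediately.

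For the lower bound $c \geq a$, I would invoke Corollary \ref{evincompat}: each pointer is incompatible with an even number of pointers, so every vertex of $G$ has even degree. A non-isolated vertex therefore has degree at least $2$. Applying the handshake identity $\sum_{v} \deg(v) = 2c$ and noting that only the $a$ non-isolated vertices contribute to the sum, I obtain $2c = \sum_{v} \deg(v) \geq 2a$, whence $c \geq a$.

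I do not anticipate a genuine obstacle here: the entire substantive input is the evenness of every vertex degree, which is exactly Corollary \ref{evincompat}, after which the argument reduces to the handshake lemma and the trivial edge bound $\binom{a}{2}$. The one point requiring care is the bookkeeping—confirming that there are $2n-1$ pointers in total, so that the maximal count of $\binom{2n-1}{2}$ contexts and hence the interpretation of $c$ as the edge count of $G$ are correct—together with the clean identification of $\pi$-universal pointers with isolated vertices of $G$.
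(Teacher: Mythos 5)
Your proof is correct and follows essentially the same route as the paper: the paper also forms the incompatibility graph (taking only the non-universal pointers as vertices, which changes nothing since universal pointers are isolated), counts $a$ vertices and $c$ edges, and uses the even-degree fact behind Corollary \ref{evincompat} to get minimum degree $2$, yielding $c\geq a$ by the handshake identity and $c\leq\binom{a}{2}$ by the trivial edge bound. Your write-up is in fact slightly more explicit than the paper's about why every non-isolated vertex has degree at least $2$.
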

\begin{proof} Let $\pi\in M_{2n}$ have $\binom{2n-1}{2}-c$ valid pointer contexts and $a$ non-universal pointers. Define the ``incompatibility" graph $G_\pi$ as follows: the vertices are the set of non-universal pointers, and draw an edge between two non-universal pointers if they are incompatible. Then, $G_\pi$ has $a$ vertices and $c$ edges. Moreover, each vertex has degree at least $2$. Thus, we have $c\leq \binom{a}{2}$ and $c\geq a$. 
\end{proof}

\begin{lemma}
If $\pi\in M_{2n}$ has $\binom{2n-1}{2}-c$ valid pointer contexts for $c>0$, then $c\geq 3$.
\end{lemma}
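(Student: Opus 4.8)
The plan is to reuse the ``incompatibility'' graph $G_\pi$ introduced in the proof of the preceding lemma: its vertices are the non-universal pointers of $\pi$, and two such pointers are joined by an edge precisely when they are incompatible. Since $\pi$ has $\binom{2n-1}{2}-c$ valid pointer contexts with $c>0$, the graph $G_\pi$ has exactly $c$ edges and is nonempty; in particular it has at least two vertices, since a single incompatibility already involves two distinct pointers.

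The key observation is that Corollary \ref{evincompat} upgrades the minimum degree of $G_\pi$ from $1$ to $2$. Indeed, the number of pointers incompatible with a given pointer is exactly its degree in $G_\pi$, and by Corollary \ref{evincompat} this number is always even. A vertex of $G_\pi$ is, by definition, a non-universal pointer, hence is incompatible with at least one other pointer; combined with evenness, its degree is therefore at least $2$. Next I would record that $G_\pi$ is a simple graph, since two distinct pointers are either compatible or incompatible and so are joined by at most one edge, with no loops. A nonempty finite simple graph in which every vertex has degree at least $2$ must contain a cycle, and every cycle in a simple graph traverses at least three distinct edges. Hence $c = |E(G_\pi)| \geq 3$, as desired.

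The argument carries no genuine obstacle; its entire content is the degree bound, and the only point requiring care is the standard graph-theoretic fact that a simple graph with minimum degree $2$ contains a cycle of length at least three. For robustness it is instructive to eliminate the two offending values directly. If $c=1$, the two endpoints of the unique edge would each be incompatible with exactly one pointer, contradicting the evenness supplied by Corollary \ref{evincompat}. If $c=2$, the handshake bound $2c=\sum_v \deg(v)\geq 2a$ (with $a$ the number of vertices) forces $a\leq 2$, while the existence of an edge gives $a\geq 2$ and hence $a=2$; but a simple graph on two vertices has at most one edge, contradicting $c=2$. Either route yields $c\geq 3$.
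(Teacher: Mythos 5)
Your proof is correct and rests on exactly the same foundation as the paper's: the incompatibility graph $G_\pi$, whose minimum degree is at least $2$ by Corollary \ref{evincompat} and which is simple with $c$ edges — this is precisely the content of the preceding lemma's inequality $a \leq c \leq \binom{a}{2}$ that the paper invokes for $c\in\{1,2\}$. Your cycle-existence observation and your direct elimination of $c=1,2$ are just unpacked versions of that same argument, so there is nothing genuinely different here, only a more self-contained write-up.
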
 
\begin{proof} Let $a$ be the number of non-universal pointers of $\pi$. If $c=1$ or $c=2$, then $a\leq c\leq \binom{a}{2}<a$, which is impossible.
\end{proof}

\begin{lemma}\label{nonuniversalconsecutive} Let $\pi\in M_{2n}$, and let $(p,p+1)$ be a non-universal pointer. Then, either $(p-1,p)$ or $(p+1,p+2)$ is non-universal.
\end{lemma}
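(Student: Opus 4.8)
The plan is to prove the contrapositive: assuming both $(p-1,p)$ and $(p+1,p+2)$ are $\pi$-universal, I will show that $(p,p+1)$ is $\pi$-universal as well. Throughout I use the group action of the previous section: a cyclic shift $C_a$ merely rotates the double occurrence word $W(\pi)$ without relabelling any pointer, so it preserves the interlacement structure and hence the universality of every pointer. I may therefore cyclically shift so that $\pi^{-1}(p)=1$, i.e.\ value $p$ sits in the first position.

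First I would record the compatibility criterion in a convenient form. For a pointer $Q=(q,q+1)$, let $S_Q$ be the set of values lying strictly between $q$ and $q+1$ along the forward arc of positions running from $\pi^{-1}(q)$ to $\pi^{-1}(q+1)$. Since the two occurrences of $Q$ in $W(\pi)$ are $R(q)$ and $L(q+1)$, a pointer $(y,y+1)\ne Q$ interleaves with $Q$ precisely when exactly one of its occurrences $R(y),L(y+1)$ falls on that arc; that is, $(y,y+1)$ is compatible with $Q$ if and only if exactly one of $y,y+1$ belongs to $S_Q$. Consequently $Q$ is $\pi$-universal if and only if the indicator of $S_Q$ alternates along the cyclic value-sequence at every edge except the edge $(q,q+1)$. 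Because the cycle has odd length $2n-1$, this pins $S_Q$ down uniquely: writing offsets with representatives in $\{0,\ldots,2n-2\}$, one gets $S_Q=\{q+2,\,q+4,\,\ldots,\,q+(2n-2)\}$, and in particular $\pi^{-1}(q+1)-\pi^{-1}(q)\equiv n \pmod{2n-1}$.

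Next I would extract structure from the two universal neighbours. Applying the criterion to $L=(p-1,p)$ gives $\pi^{-1}(p-1)=n$ and, reading off $S_L$, the dichotomy that positions $1,\ldots,n$ hold exactly the values of even offset from $p$ (namely $p,p+2,\ldots,p-1$) while positions $n+1,\ldots,2n-1$ hold the values of odd offset. In particular $p+1$ has odd offset, so $b:=\pi^{-1}(p+1)\in\{n+1,\ldots,2n-1\}$, and from $R=(p+1,p+2)$ universal, $\pi^{-1}(p+2)\equiv b-n+1$. The crucial step is to combine these: the arc computing $S_R$ runs forward from position $b$ to position $b-n+1$, so it meets positions $\{b+1,\ldots,2n-1\}$ (all odd-offset) together with positions $\{1,\ldots,b-n\}$ (all even-offset). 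But the explicit form $S_R=\{p+3,p+5,\ldots,p-2\}\cup\{p\}$ forced by $R$-universality contains exactly one value of even offset, namely $p$ itself at position $1$. Since each of the positions $\{1,\ldots,b-n\}$ contributes a distinct even-offset value to $S_R$, this forces $b-n\le 1$; as $b\ge n+1$ we conclude $b=n+1$.

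Finally, with $\pi^{-1}(p+1)=n+1$ the set $S_P$ is precisely the collection of values at positions $2,\ldots,n$; by the dichotomy these are exactly the even-offset values other than $p$, i.e.\ $S_P=\{p+2,p+4,\ldots,p-1\}$, the required alternating set. By the criterion $(p,p+1)$ is then $\pi$-universal, completing the contrapositive. The main obstacle I anticipate is the second paragraph: setting up the arc/compatibility criterion rigorously and, above all, doing the offset bookkeeping correctly. Since $2n-1$ is odd, the parity of an offset is not well defined modulo $2n-1$, so the ``even/odd offset'' partition must be stated with fixed integer representatives, and the cyclic wrap-arounds of the arcs defining $S_L,S_P,S_R$ must be tracked carefully, including the contraction's special pointer $(2n-1,1)$.
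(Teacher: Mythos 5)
Your proof is correct, and it gets there by a genuinely different mechanism than the paper. Both arguments normalize so that $\pi(1)=p$, use universality of $(p-1,p)$ to place $p-1$ at position $n$, and locate $p+1$ at some position $n+k$ and $p+2$ at position $k+1$; the real content is forcing $k=1$. The paper argues by contradiction: it shows that the blocks of values at positions $2,\dots,k$ and $n+1,\dots,n+k-1$ carry identical pointer sets, then runs an alternating walk (a value $q$ in the first block forces $q\pm1$ into the second, hence $q\pm2$ into the first, and so on), and the oddness of $2n-1$ eventually lands $p$ or $p-1$ inside a block where it cannot be. You instead prove an intrinsic characterization of universality --- $(q,q+1)$ is universal if and only if the set of values strictly between its two occurrences is exactly $\{q+2,q+4,\dots,q+2n-2\}$, so in particular the two occurrences sit $n$ apart --- and then combine the resulting position-parity dichotomy (from $(p-1,p)$) with a count of even-offset values in your set $S_R$ (from $(p+1,p+2)$) to force $k=1$ outright. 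The obstacle you flag, the cyclic/linear translation, is the right thing to worry about but is harmless: two pointers alternate in the linear word $W(\pi)$ if and only if their chords cross in its circular closure, so rotating the word preserves every compatibility. Your route buys two things. First, the even-offset characterization is a reusable strengthening of the paper's earlier lemma that exactly $2n-2$ pointers separate the two occurrences of a universal pointer. Second, your closing step ($k=1$ implies $(p,p+1)$ universal) is one the paper actually needs but never states: when $k=1$ its two blocks are empty, its alternating walk has no starting point, and the contradiction must instead come from the non-universality hypothesis, which the paper's written argument otherwise never invokes. Your contrapositive formulation handles that degenerate case uniformly, and indeed shows it is the only configuration that can occur.
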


\begin{proof} Assume for a contradiction that $(p,p+1)$ is non-universal but $(p-1,p)$ and $(p+1,p+2)$ are both universal. Cyclically shift $\pi$ so that $\pi(1)=p$. Then, we have $\pi(n)=p-1$. Since $(p,p+1)$ is compatible with $(p-1,p)$, we must have $\pi(n+k)=p+1$ for $1\leq k\leq n-1$.  Then, since $(p+1, p+2)$ is universal, we have $\pi(k+1)=p+2$. The sequence of pointers of $\pi$ is as follows: \[[(p-1, p), (p,p+1), \alpha, (p+1,p+2), (p+2, p+3), \beta, (p-2,p-1), (p-1, p), \gamma, (p,p+1), (p+1,p+2), \delta].\] Thus, each pointer appears exactly once in the sequence \[[(p,p+1), \alpha, (p+1,p+2), (p+2, p+3), \beta, (p-2,p-1), (p-1, p)],\] and each pointer appears exactly once in the sequence \[[(p+2, p+3), \beta, (p-2,p-1), (p-1, p), \gamma, (p,p+1), (p+1, p+2)].\] Thus, $\alpha$ and $\gamma$ contain exactly the same pointers. Let $A$ and $C$ be the sequences of elements of $\mathbb{Z}_{2n-1}$ with pointer sequences $\alpha$ and $\gamma$, respectively. If $q\in A$, then $q-1$ and $q+1$ must be in $C$, so $q-2$ and $q+2$ must be in $A$, and so on. But then, depending on the parity of $p-q$, either $p$ or $p-1$ is in $A$, a contradiction.
\end{proof}

\begin{lemma} A permutation $\pi\in M_{2n}$ cannot have a collection of pointers $\{(p,p+1), (p+1, p+2), (p+2,p+3)\}$ which are pairwise incompatible but every other pointer is universal. \end{lemma}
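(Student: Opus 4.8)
The plan is to recast everything as a chord-diagram/crossing computation and then extract a rigid arithmetic constraint on the positions of the relevant entries. Throughout I would view the cyclic pointer word of $\pi$ as a double-occurrence word on the $4n-2$ slots, where the left pointer of the entry in position $i$ occupies slot $2i-1$ and its right pointer occupies slot $2i$; thus the pointer $(a,a+1)$ occupies slots $2\pi^{-1}(a)$ and $2\pi^{-1}(a+1)-1$, and two pointers form a valid context exactly when their slot pairs interleave (their chords cross). Since a cyclic shift of $\pi$ merely rotates this word and rotation preserves interleaving, I may freely use the action $\phi'$ to normalize one position. Write $A=(p,p+1)$, $B=(p+1,p+2)$, $C=(p+2,p+3)$ for the three bad pointers.

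First I would establish a position condition for universal pointers. By the universal-pointer lemma above, between the two occurrences of a universal pointer $(q,q+1)$ every other pointer appears exactly once, so those two occurrences split the $4n-2$ slots into two arcs of length $2n-2$; that is, they are antipodal. Demanding that the slots $2\pi^{-1}(q)$ and $2\pi^{-1}(q+1)-1$ differ by $2n-1$ yields
\[
\pi^{-1}(q+1)\equiv \pi^{-1}(q)+n \pmod{2n-1}
\]
for every universal pointer $(q,q+1)$. Next I would exploit that \emph{all} pointers except $A,B,C$ are universal. Setting $f(a)=\pi^{-1}(a)$ and $w=f(p+3)$, the displayed recurrence applies at each step $q=p+3,p+4,\dots,p-1$, so the positions of all entries other than $p+1,p+2$ form the progression $f(p+3+k)\equiv w+kn$ for $k=0,\dots,2n-4$. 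Because $\gcd(n,2n-1)=1$ these $2n-3$ positions are distinct and omit exactly the residues $w-1$ and $w+n-1$; hence $\{f(p+1),f(p+2)\}=\{w-1,\ w+n-1\}$ and $f(p)\equiv w+n-2$.

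Normalizing by a cyclic shift so that $f(p+1)=1$ then leaves only two cases: either $w=2$ (whence $f(p)=n$, $f(p+2)=n+1$, $f(p+3)=2$) or $w\equiv n+1$ (whence $f(p)=2n-1$). In the second case $A$ occupies the cyclically adjacent slots $4n-2$ and $1$, so its chord crosses nothing and $A$ is incompatible with all $2n-2$ other pointers; this is impossible since the $2n-4\ge 2$ universal pointers must be compatible with $A$. In the first case $A$ occupies slots $\{1,2n\}$ and $B$ occupies slots $\{2,2n+1\}$, which interleave in the order $1,2,2n,2n+1$; thus $A$ and $B$ form a valid context, contradicting their assumed incompatibility. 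Either way the configuration cannot occur. (Here one uses $n\ge 3$, so that the three pointers are genuinely distinct and universal pointers exist.)

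I expect the main obstacle to be the translation between the combinatorial conditions (universality, crossing) and the arithmetic position conditions, i.e. the slot bookkeeping: getting the antipodal$\,\Rightarrow\,$position-difference-$n$ step exactly right, correctly identifying the two omitted residues $w-1,w+n-1$, and then verifying the final crossing/adjacency of the explicit slot pairs. Once that bookkeeping is pinned down the contradiction is immediate, but it is precisely where an off-by-one in the cyclic indexing would silently break the argument, so that is the step I would check most carefully. As a cross-check, I would confirm consistency with the telescoping identity $f(p+3)-f(p)\equiv (2-n)$ forced by summing $\pi^{-1}(q+1)-\pi^{-1}(q)$ around the whole cycle.
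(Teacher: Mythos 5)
Your proof is correct, and it takes a genuinely different route from the paper's. The paper argues locally: it cyclically shifts so that $\pi(1)=p$; universality of $(p-1,p)$ forces $\pi(n)=p-1$; the compatibility count of $(p,p+1)$ then places $p+1$ at position $n+2$; a multiset-of-pointers argument forces the single entry between $p-1$ and $p+1$ to be $p+2$; and the contradiction is that $(p-1,p)$ then fails to be compatible with $(p+1,p+2)$, violating its universality. You instead argue globally: compatibility is chord-crossing in the cyclic pointer word; universality makes a pointer's two occurrences antipodal, giving $\pi^{-1}(q+1)\equiv\pi^{-1}(q)+n \pmod{2n-1}$ for every universal $(q,q+1)$; chaining this over the $2n-4$ universal pointers rigidly determines every position except those of $p+1$ and $p+2$, which must fill the two omitted residues $w-1$ and $w+n-1$ (with $w=\pi^{-1}(p+3)$); and each of the two possible placements contradicts the hypotheses --- one makes the chords of $(p,p+1)$ and $(p+1,p+2)$ interleave, hence compatible, and the other puts $(p,p+1)$ on cyclically adjacent slots so that it crosses nothing, contradicting the universality of the $2n-4\geq 2$ remaining pointers. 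I checked the arithmetic you flagged as delicate: modulo $2n-1$ one has $n(2n-3)\equiv -1$ and $n(2n-2)\equiv n-1$, so the omitted residues are indeed $w-1$ and $w+n-1$, and $\pi^{-1}(p)\equiv w+n-2$; the endgame crossing and adjacency verifications are also right, and your antipodality step is exactly the cyclic form of the paper's universal-pointer lemma. What your route buys is a reusable rigidity statement --- a permutation in $M_{2n}$ whose non-universal pointers are confined to three consecutive ones is forced, outside two entries, into the arithmetic-progression-in-position structure that also underlies the paper's characterization of $M_{2n,\binom{2n-1}{2}}$ --- at the cost of the slot bookkeeping; the paper's route is shorter and needs only local counts near $p$. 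Finally, both arguments tacitly require $n\geq 3$: you say so explicitly, while the paper leaves it implicit (its proof likewise needs $(p-1,p)$ to be distinct from the three given pointers), and this is harmless since the lemma is applied only in that regime.
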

\begin{proof} Assume for a contradiction that $\pi\in M_{2n}$ has this property. Cyclically shift $\pi$ so that $\pi(1)=p$. Since $(p-1,p)$ is universal, we have $\pi(n)=p-1$. Since $(p, p+1)$ is compatible with $2n-4$ pointers, either $\pi(n)=p+1$ or $\pi(n+2)=p+1$; since $\pi(n)=p-1$, we have $\pi(n+2)=p+1$. Since $2n$ pointers appear to the right of the leftmost instance of $(p,p+1)$ and to the left of the rightmost instance of $(p,p+1)$, $(p+1,p+2)$ and $(p+2, p+3)$ must both appear twice in the sequence of pointers to the right of the leftmost instance of $(p,p+1)$ and to the left of the rightmost instance of $(p,p+1)$. We then have  \[\pi=[p\ A\ p-1\ b\ p+1\ C],\] where $A$ is a sequence of $n-2$ elements of $\mathbb{Z}_{2n-1}$, $b\in\mathbb{Z}_{2n-1}$, and $C$ is a sequence of $n-3$ elements of $\mathbb{Z}_{2n-1}$. Let $\alpha$ be the multiset of pointers of elements of $A$, then since $(p-1,p)$ is universal, $\{(p-1,p),(p, p+1), (p-2, p-1)\}\cup \alpha$ contains each pointer exactly once. Then, since $(p,p+1)$ is incompatible with $(p+1,p+2)$ and $(p+2,p+3)$, we have $b=p+2$. But then, $(p-1, p)$ is not compatible with $(p+1, p+2)$, a contradiction.
\end{proof}

\begin{corollary} $\pi\in M_{2n}$ cannot have exactly $\binom{2n-1}{2}-3$ valid pointer contexts. \end{corollary}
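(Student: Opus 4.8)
The plan is to argue by contradiction. Suppose some $\pi\in M_{2n}$ has exactly $\binom{2n-1}{2}-3$ valid pointer contexts, so that in the notation of the preceding lemmas we are in the case $c=3$. The first step is to pin down the number $a$ of non-universal pointers. The inequality $\binom{a}{2}\geq c\geq a$ from the earlier lemma gives $\binom{a}{2}\geq 3\geq a$; the right inequality forces $a\leq 3$, and the left inequality forces $a\geq 3$ since $\binom{2}{2}=1<3$. Hence $a=3$ exactly, so $\pi$ has precisely three non-universal pointers.

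The second step is to read off the structure of the incompatibility graph $G_\pi$ used in the proof of that lemma, whose vertices are the non-universal pointers and whose edges record incompatibility. Here $G_\pi$ has $a=3$ vertices and $c=3$ edges, and since $G_\pi$ is a simple graph on three vertices, the only possibility is the complete graph on three vertices. Consequently the three non-universal pointers are pairwise incompatible, while every remaining pointer is universal.

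The third step is to locate these three pointers on the cyclic index set $\mathbb{Z}_{2n-1}$. Writing them as $(p_i,p_i+1)$ for $i=1,2,3$, Lemma \ref{nonuniversalconsecutive} guarantees that each $p_i$ has a cyclic neighbor $p_i-1$ or $p_i+1$ again lying in $\{p_1,p_2,p_3\}$. I would then partition $\{p_1,p_2,p_3\}$ into its maximal runs of consecutive residues: a run of length one would contain a value both of whose cyclic neighbors are universal pointers, contradicting the lemma, so the three residues must form a single run. For $2n-1>3$ (which holds under the standing hypothesis $n\geq 3$) this means the pointers are exactly $(p,p+1),\,(p+1,p+2),\,(p+2,p+3)$ for some $p$. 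Finally, this is precisely a triple of pairwise-incompatible consecutive pointers with every other pointer universal, the configuration forbidden by the immediately preceding lemma, and the resulting contradiction completes the argument.

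I expect the run-structure analysis in the third step to be the only delicate point: one must verify that the cyclic adjacency imposed by Lemma \ref{nonuniversalconsecutive} genuinely excludes the "two-plus-one" splitting of the three residues, and that the degenerate wrap-around possibility $2n-1=3$ is ruled out by $n\geq 3$. The remaining steps are bookkeeping that invokes the enumeration of edges of $G_\pi$ and the two structural lemmas already established.
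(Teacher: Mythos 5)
Your proof is correct and is precisely the chain of reasoning the paper intends (it states this corollary without proof, immediately after the relevant lemmas): the bound $\binom{a}{2}\geq c\geq a$ with $c=3$ forces exactly three non-universal pointers whose incompatibility graph is the triangle, Lemma \ref{nonuniversalconsecutive} forces them to be cyclically consecutive, and the preceding lemma forbids exactly that configuration. Your handling of the side cases (ruling out the ``two-plus-one'' split and the wrap-around case $2n-1=3$) is the right bookkeeping and matches the paper's standing assumption $n\geq 3$ in this subsection.
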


\begin{theorem} A permutation $\pi\in M_{2n}$ with $\binom{2n-1}{2}-4$ valid pointer contexts must be in the orbit of $[4\ 2\ 6\ \dots\ 2n-2\ 1\ 3\ \dots\ 2n-1]$.
\end{theorem}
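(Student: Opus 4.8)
The plan is to first pin down the incompatibility graph $G_\pi$ purely combinatorially, then use the rigidity forced by the universal pointers to reduce to a finite check. Since $\pi$ has $\binom{2n-1}{2}-4$ valid pointer contexts, it has $c=4$ incompatible pairs. Writing $a$ for the number of non-universal pointers, the bound $\binom{a}{2}\ge c\ge a$ forces $a=4$. By Corollary \ref{evincompat} each of these four pointers is incompatible with an even number of pointers, so each has incompatibility-degree at least $2$; since the four degrees sum to $2c=8$, every degree equals $2$ and $G_\pi$ is the unique $2$-regular graph on four vertices, a $4$-cycle. Finally Lemma \ref{nonuniversalconsecutive} shows the non-universal pointers occupy consecutive index-blocks of length at least $2$, so they form either a single block $(m,m{+}1),(m{+}1,m{+}2),(m{+}2,m{+}3),(m{+}3,m{+}4)$ of four consecutive pointers, or two separated blocks of two.

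Next I would exploit the universal pointers. Re-running the argument behind the classification for $k=\binom{2n-1}{2}$, the lemma that a universal pointer has exactly $2n-2$ pointers between its two occurrences forces $\pi^{-1}(k+1)=\pi^{-1}(k)+n \pmod{2n-1}$ for every universal $(k,k+1)$. Hence along each maximal run of universal pointers the positions form an arithmetic progression of common difference $n$; equivalently $\pi$ agrees, outside the values interior to the non-universal block(s), with a cyclic shift of the constant-difference-$2$ permutation $[2\ 4\ \cdots\ 2n-2\ 1\ 3\ \cdots\ 2n-1]$. Using a translation to fix the block location, and noting that the only remaining freedom is a global cyclic shift, $\pi$ is determined up to the action $\phi'$ by the discrete placement of these few interior values into the gap positions left by the progression.

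For the single-block case the interior values are $2,3,4$, which must be assigned to the three gap positions, giving $3!$ placements. For each I would read $G_\pi$ off the double-occurrence word, using that two pointers are compatible exactly when their chords cross: for pointers with disjoint value-sets this is the interleaving of the pairs $\{\pi^{-1}(p),\pi^{-1}(p+1)\}$ and $\{\pi^{-1}(q),\pi^{-1}(q+1)\}$ on $\mathbb{Z}_{2n-1}$, and for consecutive pointers $(p,p+1),(p+1,p+2)$ it is the condition that $\pi^{-1}(p+2)$ lie strictly inside the forward arc from $\pi^{-1}(p)$ to $\pi^{-1}(p+1)$. The identity placement reproduces the all-universal permutation ($c=0$); two placements give $c=5$; and of the three placements with exactly four incompatibilities, two produce an odd degree sequence and are eliminated by Corollary \ref{evincompat}. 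Only the placement that exchanges the positions of $2$ and $4$ survives, and it is precisely $[4\ 2\ 6\ \cdots\ 2n-2\ 1\ 3\ \cdots\ 2n-1]$, so in this case $\pi$ lies in the target orbit.

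The main obstacle is the remaining two-block case, together with making this count-and-degree bookkeeping airtight. When the four non-universal pointers split into two separated blocks of two, the universal runs yield two independent arithmetic arcs whose relative offset is an extra free parameter, so the reduction to a finite check is less immediate; one must show that the cross-incompatibilities forced between the two blocks can never be exactly a $2$-regular $4$-cycle while all pointers outside the blocks stay universal. The cleanest framework for both the delicate interleaving at shared values and the two-block analysis is the chord-diagram picture, in which every universal pointer is an antipodal chord and one tracks exactly which perturbations of the interior chords destroy four crossings; carrying this out, and ruling out every two-block configuration as unrealizable in $M_{2n}$, is where the real work lies.
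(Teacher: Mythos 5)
Your opening reduction (four non-universal pointers, each incompatible with exactly two others, forming a $4$-cycle in the incompatibility graph) matches the paper's setup, and your key rigidity fact --- that a universal pointer $(k,k+1)$ forces $\pi^{-1}(k+1)=\pi^{-1}(k)+n \pmod{2n-1}$ --- is exactly what the paper exploits. But your proof has a genuine gap precisely where you place it: the two-block case is never ruled out, and it cannot be handled by your finite-check strategy at all. When the non-universal pointers split as $\{(p-1,p),(p,p+1)\}\cup\{(q-1,q),(q,q+1)\}$ with the blocks separated, the universal pointers force two independent arithmetic runs (the values $p+1,\dots,q-1$ and $q+1,\dots,p-1$), but the relative offset of the two runs and the positions of the values $p$ and $q$ remain free parameters ranging over $\mathbb{Z}_{2n-1}$; the number of candidate configurations grows with $n$, so there is no finite list to check. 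This case is where the paper's proof does its real work: it keeps $q$ general, normalizes $p=4$ and $\pi(1)=4$, and uses the fact that each non-universal pointer is compatible with exactly $2n-4$ pointers (so its partner value sits at position distance $n\pm 1$) together with universality of $(5,6)$ and of $(c,c+1)$ for $c\ge 6$ to pin down entries one at a time, ultimately deriving $q=2$ --- that is, the two blocks are forced to be adjacent. Without an argument of this kind, your proof establishes the theorem only under the unproven assumption that the four non-universal pointers are consecutive.

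A second, more repairable, problem: your bookkeeping in the single-block case is incorrect. After normalizing the non-universal pointers to $(1,2),(2,3),(3,4),(4,5)$, the universal pointers place $5,6,\dots,2n-1,1$ in an arithmetic progression, leaving $2,3,4$ for the gap positions $1$, $2$, $n+1$. Of the six placements, the four you discard are not eliminated because they give ``$c=5$'' or an odd degree sequence: each of them creates an adjacency (one of $1\,2$, $2\,3$, $3\,4$, or $4\,5$ appears consecutively), so the corresponding pointer is compatible with nothing, the permutation cannot lie in $M_{2n}$ at all by Proposition \ref{prop:maxpileprops}, and its number of incompatible pairs is on the order of $2n$ rather than $4$ or $5$. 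The conclusion --- only the placement exchanging $2$ and $4$ survives, giving $[4\ 2\ 6\ \dots\ 2n-2\ 1\ 3\ \dots\ 2n-1]$ --- is correct, and the adjacency obstruction is uniform in $n$, which is what legitimizes the finite check; but the reasons you state would not survive scrutiny and should be replaced by the adjacency argument.
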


\begin{proof} By lemma \ref{nonuniversalconsecutive}, the non-universal pointers of $\pi$ are of the form \\ $\{(p-1, p), (p,p+1), (q-1, q), (q, q+1)\}$. where without loss of generality $q\neq p+2$. Up to a cyclic shift and a translation, we have $p=4$ and $\pi(1)=4$. Since each non-universal pointer is incompatible with at least two other pointers, and since the number of available valid pointer contexts is $\binom{2n-1}{2}-4$, each non-universal pointer is incompatible with exactly two other pointers. We thus have that either $\pi(n)=5$ or $\pi(n+2)=5$. If $\pi(k)=6$, then since $(5,6)$ is universal, we have $\pi(k+n-1)=5$. Since $(5, 6)$ is compatible with $(4, 5)$, we have $\pi^{-1}(6)<\pi^{-1}(5)$, so $\pi(n+2)=5$ and $\pi(3)=6$. Since $(3,4)$ is compatible with $2n-4$ pointers, we either have $\pi(n-1)=3$ or $\pi(n+1)=3$, so $(3, 4)$ is compatible with $(4,5)$, and both are not compatible with $(q-1, q)$ and $(q,q+1)$. We have that \[\pi=[4\ a\ 6\ B\ 5\ C],\] where $a\in\mathbb{Z}_{2n-1}$, $B$ is a sequence of $n-2$ elements of $\mathbb{Z}_{2n-1}$, and $C$ is another sequence of elements of $\mathbb{Z}_{2n-1}$. Let $\beta$ be the multiset of pointers of $B$. Since $(5,6)$ is universal, very pointer appears exactly once in the multiset $\{(5, 6),(6, 7), (4,5)\} \cup \beta$. Then, $(4,5)$ is incompatible with $(a-1,a)$ and $(a,a+1)$, so $q=a=\pi(2)$. Since $q$ appears between $4$ and $3$, the sequence of pointers to the right of the leftmost instance of $(3,4)$ and to the left of the rightmost instance of $(3,4)$ contains two copies of each of $(q-1,q),(q,q+1)$, and contains all pointers other than $(3,4)$, $(q-1,q)$, and $(q,q+1)$ exactly once. Thus, $\pi(n+1)=3$, and $\pi^{-1}(q+1)\leq \pi^{-1}(3)$. Since $(q,q+1)$ is compatible with $2n-4$ other pointers, we have $\pi(2+n-1)=q+1$ or $\pi(2+n+1)=q+1$. Since $\pi^{-1}(q+1)\leq \pi^{-1}(3)=n+1$, we have $\pi(n+1)=q+1$ and thus $q+1=3$, so $q=2$. For all numbers $c\in \{6,\dots,2n-2\}$, since the pointer $(c,c+1)$ is universal, we have that if $\pi(k)=c$, then $\pi(k+n)=c+1$. Since we know already that $\pi(3)=6$, the rest of the permutation is determined, and is equal to $[4\ 2\ 6\ \dots\ 2n-2\ 1\ 3\ \dots\ 2n-1]$. 
\end{proof}

\begin{corollary} For $n\geq 3$, $|M_{2n,\binom{2n-1}{2}-4}|=(2n-1)^2$.
\end{corollary}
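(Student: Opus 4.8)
The plan is to assemble this corollary from the three immediately preceding results together with the orbit-stabilizer computation of Section 3: the lower bound is supplied by the Proposition, the containment in a single orbit by the Theorem just proved, and the orbit size by the orbit-stabilizer Corollary.

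First I would show that $M_{2n,\binom{2n-1}{2}-4}$ is exactly one $\phi'$-orbit. The preceding Theorem establishes that every element of $M_{2n,\binom{2n-1}{2}-4}$ lies in the orbit $\mathcal{O}_\sigma$ of the contraction $\sigma = [4\ 2\ 6\ \dots\ 2n-2\ 1\ 3\ \dots\ 2n-1]$, giving the containment $M_{2n,\binom{2n-1}{2}-4}\subseteq \mathcal{O}_\sigma$. For the reverse containment, the Proposition guarantees that $\sigma$, being the contraction of $[4\ 2\ 6\ \dots\ 2n-2\ 2n\ 1\ 3\ \dots\ 2n-1]$, has maximal strategic pile and exactly $\binom{2n-1}{2}-4$ valid pointer contexts, so $\sigma\in M_{2n,\binom{2n-1}{2}-4}$. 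Since the Theorem asserting that $\phi'$ maps $M_{2n,k}$ into itself shows the action preserves both maximality of the strategic pile and the number of valid pointer contexts, the entire orbit $\mathcal{O}_\sigma$ is contained in $M_{2n,\binom{2n-1}{2}-4}$. Hence $M_{2n,\binom{2n-1}{2}-4}=\mathcal{O}_\sigma$, and only the computation of $|\mathcal{O}_\sigma|$ remains.

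For the orbit size I would combine upper and lower bounds rather than recompute anything. Since $\phi'$ is an action of $\mathbb Z_{2n-1}\times\mathbb Z_{2n-1}$, a group of order $(2n-1)^2$, every orbit has size at most $(2n-1)^2$, so the identification above already gives $|M_{2n,\binom{2n-1}{2}-4}|\le (2n-1)^2$. On the other hand, the Proposition directly asserts that there are at least $(2n-1)^2$ permutations in $M_{2n,\binom{2n-1}{2}-4}$, so $|M_{2n,\binom{2n-1}{2}-4}|\ge (2n-1)^2$. These two inequalities force equality. (Equivalently, one may invoke the orbit-stabilizer Corollary together with the Proposition's assertion that the difference sequence is non-periodic to conclude $|\mathcal{O}_\sigma|=(2n-1)^2$ directly.)

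I expect essentially no genuine obstacle here, since all the substantive work lives in the preceding Theorem; this corollary is bookkeeping of a matching lower and upper bound. The one point requiring a moment's care is that the orbit-stabilizer Corollary is phrased for the length-$(2n-1)$ contraction, whereas the Proposition states non-periodicity for its length-$2n$ preimage. The sandwich argument of the previous paragraph—using only that the ambient group has order $(2n-1)^2$ for the upper bound and the Proposition's explicit count for the lower bound—sidesteps this point entirely, so I would present the proof in that form to avoid any need to transfer the non-periodicity statement between the two lengths.
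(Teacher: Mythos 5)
Your proposal is correct and matches the paper's (implicit) argument: the corollary is exactly the assembly of the preceding Proposition (lower bound via the explicit permutation), the preceding Theorem (all of $M_{2n,\binom{2n-1}{2}-4}$ lies in a single $\phi'$-orbit), and the fact that an orbit of the $\mathbb{Z}_{2n-1}\times\mathbb{Z}_{2n-1}$-action has at most $(2n-1)^2$ elements. Your sandwich formulation, which avoids transferring non-periodicity of the difference sequence from the length-$2n$ permutation to its contraction, is a sensible and harmless streamlining of the same argument.
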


\subsection*{Characterizing $M_{2n,k}$ for $k=2n-1$}
\begin{proposition} For $n\geq 3$, the permutations $[2n\ 1\ n+1\ 2\ n+2\ \dots\ n-1\ 2n-1\ n]$ and $[2n-1\ 2n-2\ \dots\ 2n\  1]$ have maximal strategic pile, $(2n-1)$ valid pointer contexts, and constant difference sequences. Thus, there are at least $2(2n-1)$ permutations in $M_{2n,2n-1}$. \end{proposition}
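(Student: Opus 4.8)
The plan is to verify the three asserted properties on the reductions of the two permutations to $M_{2n}$ and then feed the result into the orbit-stabilizer apparatus of Section~3. Write $\sigma_{1}=[\,2n\ 1\ n+1\ 2\ \cdots\ 2n-1\ n\,]$, specified by $\sigma_{1}(1)=2n$, $\sigma_{1}(2j)=j$, $\sigma_{1}(2j+1)=n+j$, and $\sigma_{2}=[\,2n-1\ 2n-2\ \cdots\ 2\ 2n\ 1\,]$, specified by $\sigma_{2}(k)=2n-k$ for $k\le 2n-2$, $\sigma_{2}(2n-1)=2n$, $\sigma_{2}(2n)=1$. In both permutations the symbol $2n$ occurs immediately to the left of $1$, so each satisfies the necessary condition in Proposition~\ref{prop:maxpileprops}(1) and admits the contraction of Section~2. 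Deleting the entry $2n$ yields the reductions $\sigma_{1}',\sigma_{2}'\in S_{2n-1}$, with $\sigma_{1}'(2j-1)=j$, $\sigma_{1}'(2j)=n+j$ and $\sigma_{2}'(k)=2n-k$ (the reversal); on these the difference sequence, strategic pile size, and valid pointer context count are the natural invariants, the last being unaffected by the contraction.

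First I would compute the difference sequences modulo $2n-1$. For $\sigma_{1}'$ the two alternating differences are $(n+j)-j=n$ and $(j+1)-(n+j)=1-n\equiv n$, so $D_{\sigma_{1}'}$ is constant with value $d_{1}=n$; for $\sigma_{2}'$ every difference equals $-1\equiv 2n-2$, so $D_{\sigma_{2}'}$ is constant with value $d_{2}=2n-2$. This settles the constant difference sequence claim (period $p=1$), and since $\gcd(n,2n-1)=1$ and $\gcd(2n-2,2n-1)=1$, each is a genuine permutation.

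Next I would establish maximal strategic pile. Because the difference sequences are constant, Theorem~3.17 reduces to the arithmetic requirements $\gcd(d_{i},2n-1)=\gcd(d_{i}-1,2n-1)=1$: for $d_{1}=n$ one has $\gcd(n-1,2n-1)=\gcd(n-1,1)=1$, and for $d_{2}=2n-2$ one has $\gcd(2n-3,2n-1)=\gcd(2n-3,2)=1$ since $2n-3$ is odd. The one delicate point is condition~(1) of Theorem~3.17 in the degenerate case $p=1$; as a robust alternative I would compute $C_{\sigma_{i}}$ directly and check it is a single $(2n+1)$-cycle with $C_{\sigma_{i}}(0)=2n$. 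For $\sigma_{2}$, for example, this cycle is $0\to 2n\to 1\to 3\to\cdots\to 2n-1\to 2\to 4\to\cdots\to 2n-2\to 0$, which places all of $\{1,\dots,2n-1\}$ between $2n$ and $0$, so $|\text{SP}(\sigma_{2})|=2n-1$; the computation for $\sigma_{1}$ is analogous.

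For the valid pointer context count I would apply the constant difference formula of Section~4, that the count is $(2n-1)\cdot\min(d^{-1}-1,\,2n-1-d^{-1})$: modulo $2n-1$ one has $d_{1}^{-1}\equiv 2$ and $d_{2}^{-1}\equiv 2n-2$, so the minimum equals $1$ in each case and the count is exactly $2n-1$. Thus $\sigma_{1}',\sigma_{2}'\in M_{2n,\,2n-1}$, both with period-$1$ difference sequence, and the orbit-size corollary gives each orbit under $\phi'$ size $(2n-1)\cdot 1=2n-1$. Since cyclic shift and translation preserve the difference value, that value is an orbit invariant, and $d_{1}=n\not\equiv 2n-2=d_{2}\pmod{2n-1}$ for $n\ge 3$; hence the two orbits are disjoint and together contribute $2(2n-1)$ distinct members of $M_{2n,2n-1}$. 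The main obstacle I anticipate is bookkeeping: ensuring the properties stated for the length-$2n$ permutations transfer faithfully to the length-$(2n-1)$ reductions on which Theorem~3.17, the Section~4 count, and the orbit-size corollary operate, together with dispatching the $p=1$ edge case of Theorem~3.17, where the explicit $C_{\pi}$ computation is the dependable fallback.
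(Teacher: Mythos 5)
Your proposal is correct, and in fact the paper states this proposition with no proof at all, leaving it as a direct verification; your argument supplies exactly the verification the paper intends, using its own machinery (constant difference sequences, the Section 4 count $(2n-1)\cdot\min(d^{-1}-1,\,2n-1-d^{-1})$ with $d^{-1}=2$ and $d^{-1}=2n-2$ respectively, and the orbit-stabilizer corollary giving orbit size $(2n-1)\cdot 1$ for period $1$). One detail worth keeping from your write-up, since the paper's "thus" glosses over it entirely: the two orbits are disjoint because the constant difference value is invariant under both cyclic shift and translation, and $n\not\equiv 2n-2 \pmod{2n-1}$ precisely when $n\geq 3$ -- this is where the hypothesis $n\geq 3$ is actually used, and your fallback of computing $C_{\pi}$ directly (rather than invoking Theorem 3.17 in its degenerate period-$1$ case) is the more reliable route to the maximal strategic pile claim.
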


\begin{lemma}
If $\pi\in M_{2n,2n-1}$, then each pointer is compatible for \textbf{cds} with exactly two other pointers.
\end{lemma}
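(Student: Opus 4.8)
The plan is to recast the statement as a degree condition on the \emph{compatibility graph} of the pointer word and then close it with a handshake count together with Lemma \ref{evcompat}. Form the graph $H_\pi$ whose vertices are the $2n-1$ pointers of $\pi$ and whose edges join two pointers exactly when they form a valid pointer context. Since $\pi\in M_{2n,2n-1}$ has exactly $2n-1$ valid pointer contexts, $H_\pi$ has $2n-1$ vertices and $2n-1$ edges, so $\sum_{v}\deg_{H_\pi}(v)=2(2n-1)$; the average degree is exactly $2$. By Lemma \ref{evcompat} every degree is even. Hence it suffices to show that $H_\pi$ has no isolated vertex: once every vertex has degree at least $2$, the degree sum $2(2n-1)$ spread over $2n-1$ vertices forces every degree to equal $2$.

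So everything reduces to the claim that no pointer of $\pi$ is compatible with zero others. First I would characterize when a pointer $q=(j,j+1)$ is isolated by inspecting the segment of the pointer word strictly between the two occurrences of $q$: the chord of $q$ crosses no other chord precisely when this segment is a union of complete pointer pairs. Writing $\pi(s)=j$ and $\pi(t)=j+1$ and bookkeeping which pointers have one versus both occurrences inside the segment, one finds only two ways for the segment to be balanced. Either $t=s+1$, so $j+1$ sits immediately to the right of $j$, which is an adjacency about $q$; or $\{s,t\}=\{1,2n\}$ with $\pi(1)=j+1$, $\pi(2n)=j$, and positions $2,\dots,2n-1$ carrying all the remaining values.

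Each configuration is then excluded by maximality of the strategic pile. The adjacency case is ruled out immediately by Proposition \ref{prop:maxpileprops}(2), since an even-length permutation with maximal strategic pile has no adjacencies. For the reversed case I would compute the relevant arc of the cycle $C_\pi$: from $C_\pi(x)=Y_\pi(x+1)$ one gets $C_\pi(2n)=Y_\pi(0)=\pi(2n)=j$ and $C_\pi(j)=Y_\pi(j+1)=0$ because $j+1=\pi(1)$. Thus $2n$, $j$, $0$ occur consecutively in the cycle of $C_\pi$ through $0$ and $2n$, so $\SP(\pi)=\{j\}$ has size $1$, contradicting $|\SP(\pi)|=2n-1$ for $n\ge 2$. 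This shows no pointer is isolated and finishes the proof.

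I expect the main obstacle to be the combinatorial bookkeeping in the second paragraph: for both relative orders of $j$ and $j+1$ one must track exactly which pointers contribute one occurrence and which contribute two to the segment cut out by $q$, and verify that ``balanced'' really pins the situation down to precisely the two listed configurations. The handshake reduction and the strategic-pile contradiction are routine once that characterization is established; the delicate point is that forbidding adjacencies alone does \emph{not} forbid isolated pointers, so the reversed full-block configuration genuinely must be excluded on its own, and it is the $C_\pi$ computation that does this.
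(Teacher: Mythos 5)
Your proof is correct, and its skeleton matches the paper's: both arguments combine the parity fact (Lemma \ref{evcompat}) with the count of $2n-1$ contexts spread over $2n-1$ pointers, so that everything hinges on showing that no pointer is isolated. The difference lies in how that hinge is supplied. The paper disposes of it in one sentence by citing the Strategic Pile Removal Theorem (\cite{ADAMYK}, Theorem 2.19): since $\SP(\pi)$ is maximal, every pointer corresponds to a strategic-pile element, and that theorem guarantees each such pointer lies in some valid context. You instead prove the no-isolated-pointer claim from scratch, and your argument is sound: when $j$ precedes $j+1$, the segment of the pointer word strictly between the two occurrences of $(j,j+1)$ consists of both pointers of every entry strictly between them, so balance forces the segment to be empty, i.e.\ an adjacency, which Proposition \ref{prop:maxpileprops}(2) forbids; when $j+1$ precedes $j$, the segment additionally picks up the single occurrences of $(j+1,j+2)$ and $(j-1,j)$, and balance then forces every other value to lie between them, giving $\pi(1)=j+1$ and $\pi(2n)=j$, whence your computation $C_\pi(2n)=j$, $C_\pi(j)=0$ shows $\SP(\pi)=\{j\}$, contradicting maximality. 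What your route buys is self-containedness: it avoids importing the machinery of \cite{ADAMYK} and makes visible exactly where maximality of the strategic pile is used --- your remark that adjacency-freeness alone does not exclude isolated pointers is correct (e.g.\ $[2\ 4\ 3\ 1]$ has no adjacency, yet $(1,2)$ is isolated and the strategic pile has size $1$). What it costs is length: the paper's citation already encapsulates the strategic-pile computation that you redo by hand.
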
 
\begin{proof} By \cite{ADAMYK}, Theorem 2.19, each pointer must be compatible with some other pointer. By lemma \ref{evcompat}, each pointer must be compatible with an even number of pointers. Since there are $2n-1$ pointers, the result follows.
\end{proof} 

\begin{lemma}\label{stackup} Let $\pi\in M_{2n}$. Assume $(p,p+1)$ is compatible with two other pointers. Let $a=\pi^{-1}(p)<\pi^{-1}(p+1)=b$. Then, $\pi(\{a+1,\dots,b-1\})=\{k,k+1,\dots,k+l\}$ for some $k,l$.
\end{lemma}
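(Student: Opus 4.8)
The plan is to translate compatibility with $(p,p+1)$ into a condition on which pointers lie between the two occurrences of $(p,p+1)$ in the pointer word $W(\pi)$, and then to read off the number of compatible pointers as a boundary count on the value set $S:=\pi(\{a+1,\dots,b-1\})$; a count of exactly $2$ will force $S$ to be a single block.

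First I would invoke the double-occurrence-word description of valid pointer contexts: a pointer $(q,q+1)$ is compatible with $(p,p+1)$, i.e.\ the two interleave as $p\dots q\dots p\dots q$ or $q\dots p\dots q\dots p$, if and only if exactly one of the two copies of $(q,q+1)$ lies strictly between the two copies of $(p,p+1)$. Because $a=\pi^{-1}(p)<\pi^{-1}(p+1)=b$, the leftmost copy of $(p,p+1)$ is the right pointer $R(p)$, sitting immediately after position $a$, and the rightmost copy is the left pointer $L(p+1)$, sitting immediately before position $b$. Hence the pointers strictly between them are precisely the left and right pointers of the entries $\pi(a+1),\dots,\pi(b-1)$, namely $\{L(v),R(v):v\in S\}$.

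Next I would count occurrences: for $(q,q+1)\neq(p,p+1)$, the copy $R(q)$ lies in this region exactly when $q\in S$, and the copy $L(q+1)$ lies there exactly when $q+1\in S$, so $(q,q+1)$ occurs in the region once iff exactly one of $q,q+1$ lies in $S$ (and otherwise zero or two times). The step requiring the most care is the boundary bookkeeping: since $p=\pi(a)\notin S$ and $p+1=\pi(b)\notin S$, the pointer $(p,p+1)$ itself is correctly excluded, the stray copies $L(p)$ and $R(p+1)$ lying just outside the region do not corrupt the counts for $(p-1,p)$ and $(p+1,p+2)$, and all value arithmetic must be read modulo $2n-1$ so that the wrap-around pointer $(2n-1,1)$ and the case $p=2n-1$ are handled uniformly. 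Combining this with the dictionary above, $(q,q+1)$ is compatible with $(p,p+1)$ if and only if exactly one of $q,q+1$ lies in $S$.

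Finally I would conclude by counting runs. Reading the values $1,2,\dots,2n-1$ cyclically, the number of pointers compatible with $(p,p+1)$ equals the number of consecutive pairs $(q,q+1)$ having exactly one endpoint in $S$, which is exactly twice the number of maximal blocks of consecutive values contained in $S$ (for $\emptyset\neq S\neq\{1,\dots,2n-1\}$). By hypothesis this count is $2$; since $2\neq 0$ we know $S$ is proper and nonempty, so $S$ consists of a single maximal block, that is $\pi(\{a+1,\dots,b-1\})=\{k,k+1,\dots,k+l\}$ for some $k,l$ (values read modulo $2n-1$), as claimed. I expect the identification of the between-region with $\{L(v),R(v):v\in S\}$ together with the boundary bookkeeping to be the only delicate points; the concluding run count is routine.
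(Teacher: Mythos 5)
Your proof is correct and takes essentially the same approach as the paper's: both hinge on the observation that a pointer $(q,q+1)$ is compatible with $(p,p+1)$ exactly when one of $q,q+1$ lies in $S=\pi(\{a+1,\dots,b-1\})$, so compatible pointers are the boundary pointers of $S$ and their number is twice the number of maximal consecutive blocks in $S$. The paper merely phrases this as a contradiction (a non-block $S$ yields at least three boundary pointers, namely two ``tops'' and a ``bottom''), whereas you make the pointer-word dictionary explicit and count directly; the content is the same.
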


\begin{proof} Assume not. Then, there are $q,r\in \pi(\{a+1,\dots,b-1\})$ with $q+1,r+1\notin \pi(\{a+1,\dots,b-1\})$, and there is an $s\in  \pi(\{a+1,\dots,b-1\})$ with $s-1\notin  \pi(\{a+1,\dots,b-1\})$. But then, $(p,p+1)$ is compatible with $(r,r+1)$, $(q,q+1)$, and $(s-1,s)$, a contradiction.
\end{proof}

\begin{lemma}\label{stackdown} Let $\pi\in M_{2n}$. Assume $(p,p+1)$ is compatible with two other pointers. 

Let $a=\pi^{-1}(p+1)<\pi^{-1}(p)=b$. Then, $\pi(\{a,\dots,b\})=\{k,k+1,\dots,k+l\}$ for some $k,l$.
\end{lemma}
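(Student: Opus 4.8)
\emph{Proposal.} The plan is to run the argument for Lemma~\ref{stackup} essentially verbatim, with the single change that the relevant block of values is now the \emph{closed} interval of positions $\{a,\dots,b\}$ rather than the open one. The reason for this change is exactly the reversed order of $p$ and $p+1$: since here $\pi(a)=p+1$ and $\pi(b)=p$, the pointer $(p,p+1)$ occurs in the pointer word $W(\pi)$ as the \emph{left} pointer of the element $p+1$ (at the very front of the block $L(\pi(a))\,R(\pi(a))$ attached to position $a$) and as the \emph{right} pointer of the element $p$ (at the very end of the block attached to position $b$). Hence the segment of $W(\pi)$ lying strictly between the two copies of $(p,p+1)$ begins just after $L(p+1)$, so with $R(p+1)$, and ends just before $R(p)$, so with $L(p)$; it therefore contains \emph{both} pointers of each element $\pi(a+1),\dots,\pi(b-1)$ together with the single pointers $R(p+1)$ and $L(p)$ coming from positions $a$ and $b$.

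First I would record the resulting compatibility criterion. Writing $T=\pi(\{a,\dots,b\})$, a pointer $(q,q+1)$ is compatible with $(p,p+1)$ iff exactly one of its two occurrences falls in this between-segment; since the occurrence $R(q)$ lies there iff $q\in\pi(\{a,\dots,b-1\})=T\setminus\{p\}$, and $L(q+1)$ lies there iff $q+1\in\pi(\{a+1,\dots,b\})=T\setminus\{p+1\}$, for every $q\neq p$ this is equivalent to: exactly one of $q$ and $q+1$ belongs to $T$. (The excluded index $q=p$ correctly contributes nothing, since $p,p+1\in T$.) In other words, the pointers compatible with $(p,p+1)$ are precisely the ``boundary'' pointers of $T$ regarded as a subset of the cycle $\mathbb Z_{2n-1}$.

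Then I would finish by contradiction, exactly as in Lemma~\ref{stackup}. Note $T\neq\emptyset$ (it contains $p,p+1$) and $T\neq\mathbb Z_{2n-1}$ (otherwise $(p,p+1)$ would be compatible with nothing); so if $T$ were \emph{not} a single cyclic run $\{k,k+1,\dots,k+l\}$, it would have at least two maximal arcs. Choosing the right endpoints $q\neq r$ of two different arcs (so $q,r\in T$ but $q+1,r+1\notin T$) and the left endpoint $s$ of some arc (so $s\in T$ but $s-1\notin T$), the criterion above makes $(q,q+1)$, $(r,r+1)$, and $(s-1,s)$ three pointers compatible with $(p,p+1)$. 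A short check shows they are pairwise distinct and all different from $(p,p+1)$: none of $q,r$ equals $p$ and $s-1\neq p$ because $p,p+1\in T$, while $s-1=q$ or $s-1=r$ would force an element to lie simultaneously inside and outside $T$. This contradicts the hypothesis that $(p,p+1)$ is compatible with exactly two pointers, so $T$ must be a single interval of consecutive residues.

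The only real obstacle is the bookkeeping in the compatibility criterion: one must verify carefully that, because $p+1$ precedes $p$, the two ``stray'' pointers $R(p+1)$ and $L(p)$ fall \emph{inside} the segment between the two copies of $(p,p+1)$ (this is exactly what forces the closed interval $\{a,\dots,b\}$, in contrast to the open interval of Lemma~\ref{stackup}), and that $T$ is treated as a subset of the cycle $\mathbb Z_{2n-1}$ so that the wrap-around pointer $(2n-1,1)$ is counted among the possible boundary pointers. Once this criterion is pinned down, the combinatorial finish is identical to that of Lemma~\ref{stackup}.
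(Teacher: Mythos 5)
Your proposal is correct, but it follows a genuinely different route from the paper's. The paper proves Lemma \ref{stackdown} by \emph{reduction} to Lemma \ref{stackup}: cyclically shift $\pi$ so that $p$ precedes $p+1$; the block strictly between the two occurrences in the shifted permutation is then exactly $\mathbb{Z}_{2n-1}\setminus\pi(\{a,\dots,b\})$, so Lemma \ref{stackup} forces that complement to be a cyclic interval, and the complement of a cyclic interval in $\mathbb{Z}_{2n-1}$ is again a cyclic interval. (This implicitly uses that cyclic shifts preserve pointer compatibility and membership in $M_{2n}$, which is what the group action of Section 3 supplies.) You instead rerun the \ref{stackup} argument directly on the reversed configuration: you re-derive the interleaving criterion --- for $q\neq p$, the pointer $(q,q+1)$ is compatible with $(p,p+1)$ if and only if exactly one of $q,q+1$ lies in $T=\pi(\{a,\dots,b\})$, i.e.\ the compatible pointers are precisely the boundary pointers of $T$ viewed as a subset of the cycle --- and then count arcs: two or more maximal arcs of $T$ would produce at least three distinct compatible pointers, and your distinctness checks ($q,r\neq p$ and $s-1\neq p$ since $p,p+1\in T$; $s-1\neq q,r$ since $s\in T$ while $q+1,r+1\notin T$) are exactly the ones needed, contradicting the hypothesis of exactly two compatible pointers. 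What the paper's route buys is brevity and reuse of \ref{stackup} as a black box; what yours buys is self-containedness (no appeal to shift-invariance of compatibility) and, in effect, a sharper statement for free: the number of pointers compatible with $(p,p+1)$ equals twice the number of maximal cyclic arcs of $T$, which simultaneously re-proves Lemma \ref{stackup} and the present lemma.
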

\begin{proof} If we cyclically shift $\pi$ to a permutation $\phi$ so that $\pi^{-1}(p)<\pi^{-1}(p-1)$, applying Lemma \ref{stackup} gives that $\mathbb{Z}_{2n-1}\setminus \pi(\{a,\dots,b\})=\{k',k'+1,\dots,k'+l'\}$ for some $k',l'$. Thus, there are $k,l$ such that $\pi(\{a,\dots,b\})=\{k,k+1,\dots,k+l\}$, as desired. 
\end{proof}

Let $\pi$ be a permutation. For $m\geq 1$, call a proper subsequence $[\pi(k),\dots,\pi(k+m)]$ \textit{violating} if $\pi(\{k,k+1,\dots,k+m\})=\{a,a+1,\dots,a+m\}$ for some $a$, if $\pi(k)=a$ and if $\pi(k+m)=a+m$. 

\begin{lemma}\label{violate} If $\pi$ is a permutation with a violating subsequence, then $\pi$ cannot be in $M_{2n}$.
\end{lemma}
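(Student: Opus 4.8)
The plan is to argue by contraposition: assuming $\pi\in M_{2n}$, I will show no violating subsequence can exist, using the dictionary between the strategic pile and the cycle structure of the permutation $C_\pi$ that is already exploited in the analysis of periodic difference sequences in Section 3. For a reduced permutation $\pi\in M_{2n}$ the size of the strategic pile is read off as the length of the governing $C_\pi$-cycle, so $\pi$ has maximal strategic pile precisely when $C_\pi$ is a single cycle on all of the symbols of $\mathbb{Z}_{2n-1}$. Concretely I would use the \emph{predecessor} description of this map: writing $\mathrm{leftof}(v)$ for the value occupying the position cyclically immediately to the left of the value $v$, the map acts by $C_\pi(w)=\mathrm{leftof}(w+1)$. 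The goal is to show that a violating subsequence traps a proper, nonempty set of symbols inside a union of $C_\pi$-cycles, contradicting single-cyclicity.

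So, suppose toward a contradiction that $[\pi(k),\dots,\pi(k+m)]$ is a violating subsequence, with $\pi(\{k,\dots,k+m\})=\{a,a+1,\dots,a+m\}$, $\pi(k)=a$, $\pi(k+m)=a+m$, and $m\ge 1$; being a proper subsequence means $m+1<2n-1$. Set $T=\{a,a+1,\dots,a+m-1\}$, and I would verify $C_\pi(T)\subseteq T$. For $w\in T$ the value $w+1$ lies in $\{a+1,\dots,a+m\}$, and since $w+1\ne a=\pi(k)$ it occupies one of the positions $k+1,\dots,k+m$; hence its left neighbour sits in a block position and $\mathrm{leftof}(w+1)\in\{a,\dots,a+m\}$. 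It remains to exclude the top value: $\mathrm{leftof}(w+1)=a+m$ would force $w+1$ to occupy the position cyclically to the right of $a+m=\pi(k+m)$, namely position $k+m+1$; but properness ($m+1<2n-1$) makes $k+m+1$ fall outside $\{k,\dots,k+m\}$, so the value there is not in the block, contradicting $w+1\in\{a+1,\dots,a+m\}$. Thus $C_\pi(w)=\mathrm{leftof}(w+1)\in T$.

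Since $C_\pi$ is a bijection and $T$ is finite, $C_\pi(T)\subseteq T$ upgrades to $C_\pi(T)=T$, so $T$ is a union of $C_\pi$-cycles. But $T$ is nonempty ($m\ge 1$) and proper ($|T|=m<2n-1$), so $C_\pi$ cannot be a single cycle; by the dictionary above $\pi$ does not have maximal strategic pile, and therefore $\pi\notin M_{2n}$, as desired. Both endpoint hypotheses of a violating subsequence enter essentially here: $\pi(k)=a$ guarantees that every other block value has its left neighbour inside the block, while $\pi(k+m)=a+m$ together with properness pushes the right neighbour of the maximal value out of the block, and it is exactly these two facts that make $T$ invariant rather than merely almost invariant.

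The step I expect to be the genuine obstacle---rather than the invariance computation, which is short---is making the opening sentence rigorous: pinning down, with the cyclic conventions of $M_{2n}$ (positions and values read modulo $2n-1$, with the contraction pointer $(2n-1,1)$ identifying the seam), the precise equivalence between maximal strategic pile and $C_\pi$ being a single cycle, and checking that the predecessor description of $C_\pi$ is valid uniformly across the seam so that no special casing of a distinguished symbol or of the wrap-around is needed. Once that bookkeeping is fixed, the remainder is the two-line argument above.
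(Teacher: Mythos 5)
Your proposal is correct and takes essentially the same approach as the paper: both arguments trap the value set $T=\{a,\dots,a+m-1\}$ as a proper nonempty invariant set of the fundamental cycle map, contradicting the fact that a maximal strategic pile forces all symbols into a single cycle. The only difference is packaging: the paper applies this directly to $C_{\pi'}$ for the unreduced permutation $\pi'\in S_{2n}$ (concluding $\pi(k)\notin\SP(\pi')$), whereas you work with a seam-adjusted predecessor map on the contraction; the correspondence you flag as the remaining obstacle is exactly the bypassing of $0$ and $2n$ (which is legitimate precisely because $2n$ immediately precedes $1$ in $\pi'$, with $0$ cyclically preceding $\pi'(1)$), so your bookkeeping does go through.
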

\begin{proof} Let $[\pi(k),\dots,\pi(k+m)]$ be the violating subsequence. Assume for a contradiction that $\pi\in M_{2n}$, and let $\pi'$ be the permutation in $S_{2n}$ with maximal strategic pile such that $\pi$ is the reduction of $\pi'$. Then, if $c\in\{\pi(k),\dots, \pi(k+m-1)\}$, we have $C_{\pi'}(c)\in \{\pi(k),\dots, \pi(k+m-1)\}$. Thus, $\pi(k)\notin\SP(\pi')$, so $\pi'$ does not have maximal strategic pile.  
\end{proof}

\begin{theorem} Let $\pi\in M_{2n,2n-1}$ have $\pi(2n-1)=1$. Then, $\pi=[2n-1\ 2n-2\ \dots\ 2\ 1]$ or $\pi=[n+1\ 2\ n+2\ 3\ \dots\ n\ 1]$.
\end{theorem}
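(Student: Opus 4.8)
The plan is to refine the compatibility data of $\pi$ into an interval picture, use the position of the value $1$ to split into two cases, and propagate the resulting constraints until $\pi$ is forced to be one of the two listed permutations. First I would combine the preceding lemma (every pointer of $\pi\in M_{2n,2n-1}$ is compatible with exactly two others) with Lemmas \ref{stackup} and \ref{stackdown}. For a pointer $(p,p+1)$, whichever of $p,p+1$ occurs first, these lemmas say the values positioned strictly between $p$ and $p+1$ form a cyclic interval $I$ of $\mathbb Z_{2n-1}$; a pointer $(q,q+1)$ then lies in a valid context with $(p,p+1)$ precisely when $\{q,q+1\}$ straddles the boundary of $I$. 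Hence the exactly-two compatible pointers of $(p,p+1)$ are its two boundary pointers $(\min I-1,\min I)$ and $(\max I,\max I+1)$. I would keep Lemma \ref{violate} on hand throughout: no block of consecutive positions may carry an interval of values arranged with its minimum first and its maximum last.

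Next I would exploit $\pi(2n-1)=1$. The two pointers incident to the value $1$ are its left pointer $(2n-1,1)$ and its right pointer $(1,2)$, whose second occurrences sit adjacently at the far right of the pointer word. Tracking the first occurrences, namely the right pointer of $2n-1$ and the left pointer of $2$, a short interleaving computation shows that $(1,2)$ and $(2n-1,1)$ constitute a valid context if and only if $2n-1$ precedes $2$ in $\pi$. This gives the decisive binary branch. In the branch where $2n-1$ precedes $2$, the interval attached to $(1,2)$ cannot wrap below $1$ (since $(2n-1,1)$ must be its lower boundary pointer), so it equals $\{1,2,\dots,t\}$ and the values $\{1,\dots,t\}$ occupy the final $t$ positions. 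I would then argue by descending induction, applying the interval/boundary description together with the prohibition of Lemma \ref{violate} on ascending interval-blocks, that $\pi$ admits no ascent, hence equals $[2n-1\ 2n-2\ \cdots\ 2\ 1]$. In the complementary branch the same machinery forces the interleaved pattern and yields $[n+1\ 2\ n+2\ 3\ \cdots\ n\ 1]$.

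An equivalent finish, which I would use as a cross-check, is to show the propagation forces a constant difference sequence: difference $2n-2$ in the first branch and $n$ in the second. Once $\pi$ has constant difference $d$, the earlier theorem counting valid pointer contexts of a constant-difference permutation gives the count $(2n-1)\min(d^{-1}-1,\,2n-1-d^{-1})$, and setting this equal to $2n-1$ forces $d^{-1}\in\{2,2n-2\}$, that is $d\in\{n,2n-2\}$. The normalization $\pi(2n-1)=1$ then selects the unique representative in each orbit, recovering exactly the two permutations in the statement.

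The main obstacle is the propagation step: converting the local interval/boundary description and Lemma \ref{violate} into a global determination of every entry without an unwieldy proliferation of sub-cases. The delicate point is showing that, once the block $\{1,\dots,t\}$ (or the analogous initial configuration in the other branch) is placed, no remaining pointer can realize an interval other than the one dictated by the target permutation. I expect to control this by inducting on the positions inward from the known right end of $\pi$, repeatedly invoking both that each pointer is compatible with exactly two others (Corollary \ref{evincompat} guaranteeing the even count that underlies this) and that an ascending interval-block is forbidden, so that the reverse (respectively interleaved) arrangement is the only survivor.
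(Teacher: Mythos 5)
Your toolkit and your branch are sound, and the parts you actually carry out are correct. The boundary-pointer reformulation of Lemmas \ref{stackup} and \ref{stackdown} is right, provided that in the descending case you take the \emph{inclusive} arc $\pi(\{a,\dots,b\})$ rather than the strictly-between values (the latter need not be an arc); the computation that $(1,2)$ and $(2n-1,1)$ form a valid context if and only if $2n-1$ precedes $2$ is correct; and your first branch can indeed be completed along the lines you sketch. It runs parallel to the paper's Case 1 (which branches instead on $\pi^{-1}(3)<\pi^{-1}(2)$, so you cannot simply cite it, but the same moves work): with $K=\{1,\dots,t\}$ in the last $t$ positions, apply Lemma \ref{stackdown} to the pointer $(t,t+1)$ to force the tail to be $[2\ \sigma\ t\ 1]$ with $\sigma$ a permutation of $\{3,\dots,t-1\}$, note that $[2\ \sigma\ t]$ is a violating block, conclude $t=2$ from Lemma \ref{violate}, and induct inward.

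The genuine gap is your second branch, which you dispatch with ``the same machinery forces the interleaved pattern.'' It does not, at least not directly: when $2$ precedes $2n-1$ the arc of $(1,2)$ wraps, the segment between $2$ and $1$ must interleave small and large entries, and the one-step violating-block collapse that kills branch 1 is unavailable --- the target $[n+1\ 2\ n+2\ 3\ \cdots\ n\ 1]$ itself contains many ascending, interval-valued configurations that are \emph{not} violating, so Lemma \ref{violate} together with the exactly-two count gives no evident induction on positions from the right end. The paper's proof of this half rests on an idea absent from your proposal: since $M_{2n,2n-1}$ is closed under the cyclic-shift-and-translation action of Section 3, the Case 1 result can be applied to \emph{every} shift and translate of $\pi$; because $\pi$ is not in the orbit of $[2n-1\ 2n-2\ \cdots\ 1]$, this yields, for every $x$, an ordering constraint on $x+1$ and $x+2$ in the shift placing $x$ last, and that global constraint is what reduces the whole branch to the single claim $\pi(2)=2$, finally proved by a two-case violating-subsequence argument using Lemmas \ref{stackup} and \ref{stackdown}. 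Without this transfer-to-all-shifts step (or a substitute), your ``propagation inward'' has no mechanism to locate the large entries $n+1,\dots,2n-1$, which is exactly the obstacle you admit is unresolved. Your constant-difference cross-check does not repair this: it begins ``show the propagation forces a constant difference sequence,'' i.e., it presupposes the very step that is missing.
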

\begin{proof} \underline{Case 1}: Assume $\pi^{-1}(3)<\pi^{-1}(2)$. We will show that for all $k$, $\pi^{-1}(k)=2n-k$. 

First, we will show that $\pi(2n-2)=2$. By Lemma \ref{stackdown}, we have \[\pi =[\alpha\  2\ \beta\ 1], \] where for some $a$, $\alpha$ is a permutation of $\{3,\dots,a\}$, and $\beta$ is a permutation of $\{a+1,\dots,2n-1\}$.
Note that we retain the possibility that either $\alpha$ or $\beta$ is empty, and in fact seek to prove that $\beta$ is empty. Assume $\beta$ is nonempty. By applying Lemma \ref{stackup} with the pointer $(a,a+1)$, we have $\pi(\pi^{-1}(2)+1)=a+1$. In other words, $a+1$ is the ``first" element of $\beta$. Then, the subsequence $[\beta\ 1]$ is violating, so $\beta$ must be empty by Lemma \ref{violate}. 

Now, let $m$ be such that for all $l<m$, $\pi(2n-l)=l$. We will show that $\pi(2n-m)=m$. By induction, this will complete the proof of Case 1. By Lemma \ref{stackdown}, we have \[\pi = [\gamma \ m\ \delta \ m-1\ \dots\ 1],\] where $\gamma$ is a permutation of $\{l+1,\dots,2n-1\}$ and $\delta$ is a permutation of $\{m+1,\dots,l\}$. Again, we retain the possibility that either $\gamma$ or $\delta$ is empty, and in fact want to prove that $\delta$ is empty. 

Assume $\delta$ is nonempty. Cyclically shift $\pi$ to a $\phi$ such that $\phi(2n-1)=m$. Then, $a=\phi^{-1}(l)<\phi^{-1}(l+1)=b$, and by Lemma \ref{stackup}, no element of $\{m+1,\dots,l-1\}$ can be in $\phi(\{a+1,\dots,b-1\})$. Thus, $l=\phi(\phi^{-1}(m-1)-1)$, and thus $l=\pi(2n-m)$. In other words, $l$ is the ``last" element of $\delta$. Thus, $[m\ \delta]$ is a violating subsequence, which is impossible by Lemma \ref{violate}. 

\underline{Case 2}: Assume that $\pi$ is not in the orbit of $[2n-1\ 2n-2\ \dots\ 2\ 1]$. We will show that $\pi=[n+1\ 2\ n+2\ 3\ \dots\ n\ 1]$. Let $x\in\mathbb{Z}_{2n-1}$. If we cyclically shift $\pi$ to a permutation $\psi$ such that $\psi(2n-1)=x$, then $\psi^{-1}(x+2)>\psi^{-1}(x+1)$. To see this, if $\psi^{-1}(x+2)<\psi^{-1}(x+1)$, then we could translate $\psi$ by subtracting $x-1$ from every element to get $\psi'$ with the property that $\psi'^{-1}(3)<\psi'^{-1}(2)$ and $\psi'(2n-1)=1$. Then, by the proof of case 1, $\psi'=[2n-1\ 2n-2\ \dots\ 2\ 1]$, but $\pi$ is in the orbit of $\psi'$, so this is impossible. 

Let $x\in\mathbb{Z}_{2n-1}$. We will show that $\pi^{-1}(x+1)-\pi^{-1}(x)=2$. From this, the result will follow. In fact, it suffices to show that $\pi(2)=2$. To see this, cyclically shift $\pi$ to a permutation $\rho$ such that $\rho(2n-1)=x$. Then translate $\rho$ to a permutation $\rho'$ by subtracting every element by $x-1$. Then, $\rho'$ is a permutation in $M_{2n,2n-1}$ with $\rho'(2n-1)=1$ and such that $\rho'$ is not in the orbit of $[2n-1\ 2n-2\ \dots\ 2\ 1]$. Thus, $\rho'(2)=2$, so $\pi^{-1}(x+1)-\pi^{-1}(x)=2$. 

We will show that $\pi(2)=2$. Since $1,2$ do not form an adjacency, We do not have $\pi(1)=2$. Moreover, since $\pi(3)>\pi(2)$, the set of elements $y$ with $\pi^{-1}(2)<\pi^{-1}(y)<\pi^{-1}(1)$ is nonempty.

Let $\pi(k+1)=2$. By applying Lemma \ref{stackup} to an appropriate cyclic shift of $\pi$, we see that $\pi(\{1,\dots,k\})=\{a,a+1,\dots,a+k-1\}$ for some $a$. We will show that $a=a+k-1$, establishing $k=1$, as desired. 

Assume for a contradiction that $\pi^{-1}(a)<\pi^{-1}(a+k-1)$. Since \[\pi^{-1}(a)<\pi^{-1}(a+k-1)<\pi^{-1}(2)<\pi^{-1}(a+k),\] by Lemma \ref{stackup}, we have $\pi^{-1}(a+b)<\pi^{-1}(a+k-1)$ for all $0\leq b<k-1$. Thus, $\pi(k)=a+k-1$. We also have \[\pi^{-1}(a)<\pi^{-1}(a+k-1)<\pi^{-1}(2)<\pi^{-1}(a-1)<\pi^{-1}(1).\] By Lemma \ref{stackdown}, we must have $\pi^{-1}(a)<\pi^{-1}(a+b)$ for every $1\leq b\leq k-1$. Thus, $\pi(1)=a$. Then, $[\pi(1)\ \pi(2)\ \dots\ \pi(k)]$ is a violating subsequence, which is impossible by Lemma \ref{violate}.

Instead, assume for a contradiction that $\pi^{-1}(a+k-1)<\pi^{-1}(a)$. We then have \[\pi^{-1}(a+k-1)<\pi^{-1}(a)<\pi^{-1}(2)<\pi^{-1}(a+k)\leq \pi^{-1}(1).\] By Lemma \ref{stackup}, all elements $c\in \{2,3,\dots,a-1\}$ have $\pi^{-1}(c)<\pi^{-1}(a+k)$. Moreover, any element $d\in\{a+k+1,\dots,2n-1\}$ must have $\pi^{-1}(d)>\pi^{-1}(a+k)$. Then, $[a+k\ \dots\ 1]$ is a violating subsequence, which is impossible by Lemma \ref{violate}.

Thus, $\pi^{-1}(a)=\pi^{-1}(a+k-1)$, so $a=a+k-1$, $k=1$, and $\pi(2)=2$. 

\end{proof}

\begin{corollary} For $n\geq3$, $|M_{2n,2n-1}|=2(2n-1)$.
\end{corollary}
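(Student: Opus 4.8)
The plan is to show that $M_{2n,2n-1}$ is exactly the union of the two $\phi'$-orbits of the permutations $\beta_1 = [2n-1\ 2n-2\ \dots\ 2\ 1]$ and $\beta_2 = [n+1\ 2\ n+2\ 3\ \dots\ n\ 1]$ produced by the preceding theorem, and then to count those orbits using the orbit-stabilizer corollary (the one stating that a permutation whose difference sequence has period $p$ has a $\phi'$-orbit of order $(2n-1)p$).

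First I would record the two orbit sizes. Both $\beta_1$ and $\beta_2$ have constant difference sequence, with constant difference value $2n-2$ and $n$ respectively in $\mathbb Z_{2n-1}$; in particular each has periodic difference sequence of period $1$. Hence the orbit-stabilizer corollary gives $\abs{\mathcal O_{\beta_1}} = \abs{\mathcal O_{\beta_2}} = (2n-1)\cdot 1 = 2n-1$. Since $\phi'$ maps $M_{2n,2n-1}$ into itself, both orbits lie inside $M_{2n,2n-1}$. This recovers the lower bound $\abs{M_{2n,2n-1}} \ge 2(2n-1)$ of the preceding proposition, provided the two orbits are disjoint: the constant difference value is fixed by translation and by cyclic shift (a cyclic shift permutes a constant difference sequence trivially), so it is a $\phi'$-invariant, and $2n-2 \not\equiv n \pmod{2n-1}$ whenever $n \ge 3$, since then $0 < n-2 < 2n-1$.

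For the matching upper bound I would reduce an arbitrary $\pi \in M_{2n,2n-1}$ to the hypothesis of the preceding theorem by a single cyclic shift. Choose $a \in \mathbb Z_{2n-1}$ so that $\phi(a,0,\pi)$ carries the value $1$ into position $2n-1$, and set $\sigma = \phi(a,0,\pi)$. Then $\sigma \in M_{2n,2n-1}$ because $\phi'$ preserves this set, and $\sigma(2n-1)=1$, so the preceding theorem forces $\sigma \in \{\beta_1,\beta_2\}$. As $\sigma$ and $\pi$ lie in the same $\phi'$-orbit, we get $\pi \in \mathcal O_{\beta_1}\cup \mathcal O_{\beta_2}$. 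Therefore $M_{2n,2n-1} \subseteq \mathcal O_{\beta_1}\cup\mathcal O_{\beta_2}$, whence $\abs{M_{2n,2n-1}} \le \abs{\mathcal O_{\beta_1}} + \abs{\mathcal O_{\beta_2}} = 2(2n-1)$. Combining the two bounds yields the stated equality.

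Almost all of the difficulty has already been absorbed by the preceding classification theorem, so this corollary is essentially an assembly step; the only genuinely delicate point is the disjointness of the two orbits, for which I would rely on the $\phi'$-invariance of the constant difference value rather than on any entrywise comparison of the permutations. I would also take care to note that the period is exactly $1$ (not merely a divisor of something larger), so that the orbit-stabilizer corollary returns $2n-1$ precisely; this is immediate from the explicit constant differences $2n-2$ and $n$ computed above.
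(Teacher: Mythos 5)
Your proposal is correct and matches the paper's intended argument: the paper states this corollary without proof precisely because it is the assembly you describe, combining the lower bound from the preceding proposition with the upper bound from the classification theorem (shift $1$ into the last position, apply the theorem, and count via the orbit machinery of Section 3). Your extra care about orbit disjointness via the $\phi'$-invariant constant difference value ($2n-2$ versus $n$, distinct mod $2n-1$ exactly when $n\geq 3$) is a worthwhile detail that the paper leaves implicit.
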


\section{The \textbf{cds} Game}

The following combinatorial game associated with \textbf{cds} was introduced in \cite{ADAMYK}:  

\begin{definition}\label{def:cdsgame}[\textbf{cds} game] For permutation $\pi$ and set $A\subseteq \SP(\pi)$ the two-player game $\CDS(\pi,A)$, called the \textbf{cds} game, is played as follows: 

Player ONE selects a $\pi$-valid pointer context and performs \textbf{cds} on $\pi$ for that pointer context. Let $\pi_1$ be the resulting permutation. Then player TWO selects a $\pi_1$-valid pointer context and performs \textbf{cds} on $\pi_1$ for that pointer context. Let $\pi_2$ be the resulting permutation. ONE and TWO alternate making such moves until a \textbf{cds} fixed point $\phi$ is reached. If $\phi\in A$, ONE wins. Otherwise, TWO wins.
\end{definition}

The number of moves in this game can be pre-computed efficiently from a given permutation $\pi$: For let $c(\pi)$ denote the number of cycles (including length $1$ cycles) in the disjoint cycle decomposition of $C_{\pi}$. The Duration Theorem, Theorem 3.2 of \cite{ADAMYK} (see also Theorem 4 of \cite{Christie}), states
\begin{theorem}\label{thm:cdsduration} For each $\pi\in\textsf{S}_n$ that is not a \textbf{cds} fixed point, the number of consecutive applications of \textbf{cds} that results in a \textbf{cds} fixed point is
\[
\left\{
\begin{array}{ll}
\frac{n+1-c(\pi)}{2} & \mbox{ if $\pi$ is \textbf{cds} sortable} \\
\frac{n+1-c(\pi)}{2} - 1 & \mbox{ otherwise} \\
\end{array}
\right.
\]
\end{theorem}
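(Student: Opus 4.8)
The plan is to reduce everything to tracking a single integer, the cycle count $c(\pi)$ of the permutation $C_\pi$ on the $n+1$ symbols $\{0,1,\dots,n\}$, and to isolate two ingredients. Ingredient one: a single application of \textbf{cds} at a valid pointer context increases $c(\pi)$ by exactly $2$. Ingredient two: a \textbf{cds} fixed point $\sigma$ satisfies $c(\sigma)=n+1$ when $\sigma$ is the identity and $c(\sigma)=n-1$ otherwise. Granting these, if $\pi$ requires $m$ moves to reach a fixed point $\sigma$, then $c(\sigma)=c(\pi)+2m$, so $m=\tfrac{1}{2}\bigl(c(\sigma)-c(\pi)\bigr)$. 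By Theorem~\ref{thm:sortable} a \textbf{cds}-sortable permutation reaches the identity while a non-sortable one cannot, and by Theorem~\ref{thm:fixedpt} the reachable fixed points of a non-sortable permutation are all non-identity; substituting the two terminal values of $c(\sigma)$ then gives $\tfrac{n+1-c(\pi)}{2}$ in the sortable case and $\tfrac{n+1-c(\pi)}{2}-1$ otherwise. I would emphasize that this same computation shows the move count is independent of the order of play, since it depends only on $c(\pi)$ and on which class of fixed point is reached; this is what makes the duration well-defined.

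Ingredient two I would verify by direct computation from the definition $C_\pi=Y_\pi\circ X_n$, reading $Y_\pi$ as the predecessor map in the cyclic sequence $(0,a_1,\dots,a_n)$. For the identity, $\mathrm{pred}(j+1)=j$ for $j<n$ and $\mathrm{pred}(0)=n$, so $C_\pi$ is the identity on all $n+1$ symbols and $c=n+1$. For a non-identity fixed point $\sigma_k=[\,k{+}1\ k{+}2\ \dots\ n\ 1\ 2\ \dots\ k\,]$, writing out $C_{\sigma_k}(j)=Y_{\sigma_k}(X_n(j))$ shows that every $j\in\{1,\dots,n-1\}\setminus\{k\}$ is fixed while $0,n,k$ form the single three-cycle $(0\ n\ k)$; hence $c(\sigma_k)=(n-2)+1=n-1$. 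As a consistency check, this three-cycle places exactly $k$ between $n$ and $0$, matching $\SP(\sigma_k)=\{k\}$ and Theorem~\ref{thm:fixedpt}.

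Ingredient one is the crux and the main obstacle. The route I would take is to recognize $C_\pi$ as the cycle-graph permutation underlying block-interchange sorting and to use that a \textbf{cds} move is by definition a block interchange. The general theory (Christie~\cite{Christie}) guarantees that any block interchange changes $c$ by an element of $\{-2,0,2\}$, so the entire content is to show that performing \textbf{cds} at a \emph{valid} pointer context always realizes the $+2$ case rather than $0$ or $-2$. The defining interleaved pattern $p\dots q\dots p\dots q$ of a valid context is precisely the combinatorial certificate that the two pairs of cut points sit in the cycle structure of $C_\pi$ in the orientation for which the swap is cycle-increasing; equivalently, \textbf{cds} creates the two new adjacencies about $p$ and $q$ and never destroys an existing adjacency, so it can only push the cycle structure in the splitting direction. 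I would make this precise by expressing the block swap directly as its action on the cyclic sequence $(0,a_1,\dots,a_n)$ and checking that the valid-context condition forces the net effect on $c$ to be $+2$; this is exactly the optimality statement that \textbf{cds} performs a minimal block interchange, and verifying it uniformly for every valid context (not merely along an optimal sorting sequence) is where the real work lies.
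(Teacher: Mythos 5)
Your proposal cannot be checked against an internal argument, because the paper does not prove Theorem \ref{thm:cdsduration} at all: it quotes it from prior work (Theorem 3.2 of \cite{ADAMYK}; see also Theorem 4 of \cite{Christie}). Judged as a self-contained proof, your attempt has the right architecture, and it is in fact the skeleton of the standard proof in those sources. Your ingredient two is verified correctly: $C_\pi$ is the identity on $\{0,1,\dots,n\}$ when $\pi$ is sorted, and for $\sigma_k=[\,k{+}1\ \dots\ n\ 1\ \dots\ k\,]$ the map $C_{\sigma_k}$ is the $3$-cycle $(0\ n\ k)$ together with fixed points, giving $c=n+1$ and $c=n-1$ respectively. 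Granting ingredient one, the arithmetic, the use of Theorems \ref{thm:sortable} and \ref{thm:fixedpt} to decide which type of fixed point terminates \emph{every} line of play, and the resulting order-independence of the duration are all sound.

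The genuine gap is ingredient one itself: that every \textbf{cds} move at a valid pointer context increases $c$ by exactly $2$. This is not a side condition that can be outsourced --- it is the entire content of the theorem --- and you explicitly leave it unproven (``where the real work lies''). The heuristic you offer in its place does not close it: creating adjacencies about $p$ and $q$ does create two new fixed points of $C_\pi$ (an adjacency at $j$ means $C_\pi(j)=j$), but that is perfectly compatible with a net change of $0$, since the remaining cycle structure could simultaneously merge two cycles into one; nothing in ``never destroys an existing adjacency'' excludes this, and Christie's general bound of $\{-2,0,+2\}$ for block interchanges cannot distinguish the cases either. To finish, you must actually carry out the computation you only gesture at: express $C_{\mathbf{cds}_{p,q}(\pi)}$ in terms of $C_\pi$ and the four cut points, and show that the interleaving pattern $p\dots q\dots p\dots q$ forces the splitting rather than the merging configuration --- or else cite the corresponding lemma of \cite{ADAMYK} outright (note that Proposition \ref{prop:maxpileprops}(4) of this paper, which records the analogous bookkeeping in the maximal-strategic-pile case, is itself cited from \cite{ADAMYK} rather than proved). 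As it stands, the proposal is a correct outline of the known proof with its crux missing.
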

In the case when $n$ is even and $\pi$ has a maximum sized strategic pile, $c(\pi)=1$, and thus the duration of any instance of the game $\CDS(\pi,A)$ is $\frac{n}{2}-1$.

As the game $\CDS(\pi,A)$ is in the category of finite two-person perfect information combinatorial games, by a classical theorem of Zermelo \cite{ZERMELO}  one of the players has a winning strategy. Determining which player has a winning strategy in a generic instance of the game appears to be of high computational complexity, and no simple criterion is known at this time. A number of sufficient conditions for ONE to have a winning strategy, or for TWO to have a winning strategy, have been obtained in prior work \cite{ADAMYK, JANSEN}. In this section of the paper additional sufficient conditions for player ONE to have a winning strategy in this game are developed, and a connection with classical Sprague-Grundy numbering \cite{SPRAGUE, GRUNDY} in combinatorial games is adapted to this game. 

The following notational conventions will be followed in this section: $\mathcal{M}_{2n}$ denotes the permutations in $S_{2n}$ with maximal strategic pile. Thus, $M_{2n}$ is the set of contracted versions of permutations in $\mathcal{M}_{2n}$.
For positive integer $p$ the symbol $p^*$ denotes the pointer $(p,p+1)$, and the symbol $\sigma_{2n,p}$ denotes the \textbf{cds} fixed point of length $2n$ of the form $\lbrack p+1 \; p+2\; \cdots\; 2n\; 1\; 2\; \cdots p\rbrack$. 

\subsection*{Sprague-Grundy Numbering}

In this subsection the Strategic Pile Retention Theorem, Theorem 2.21 of \cite{ADAMYK} , will be useful:
\begin{theorem}\label{thm:strpileretention}
Let $\pi\in \textsf{S}_n$ be a permutation for which $\SP(\pi)$ has more than one element. Then for any $x\in \SP(\pi)$ there is a $\pi$-compatible pair $(p,q)$ of pointers such that $x\in\SP(\textbf{cds}_{p,q}(\pi))$.
\end{theorem}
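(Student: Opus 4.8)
The plan is to translate membership in the strategic pile into a reachability statement about \textbf{cds} fixed points, and then to observe that reachability of a fixed point is automatically preserved when one takes the first step of a reduction sequence aimed at it. By the definition of the strategic pile, $\SP(\pi)$ records exactly the non-identity \textbf{cds} fixed points reachable from $\pi$, and Theorem \ref{thm:fixedpt} makes this precise: for $x\in\mathbb{Z}_n$ one has $x\in\SP(\pi)$ if and only if the fixed point $F_x := [x+1\; x+2\; \cdots\; n\; 1\; 2\; \cdots\; x]$ is reachable from $\pi$ by some finite sequence of \textbf{cds} operations. The entire argument will run through this two-way correspondence.

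First I would use the hypothesis $|\SP(\pi)|>1$ to guarantee that $\pi$ is not itself a \textbf{cds} fixed point. A fixed point admits no valid pointer context, so it reaches only itself under \textbf{cds}, and therefore its strategic pile has at most one element (the identity's pile is empty by Theorem \ref{thm:sortable}, while a nonidentity fixed point $[k\;\cdots\;n\;1\;\cdots\;k-1]$ has pile $\{k-1\}$). Hence $|\SP(\pi)|>1$ forces $\pi$ to admit at least one valid pointer context. This is exactly where the hypothesis is needed: it rules out the degenerate situation in which no \textbf{cds} move exists at all, so that the conclusion could not even be stated.

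Next, fix $x\in\SP(\pi)$. By Theorem \ref{thm:fixedpt}, $F_x$ is reachable from $\pi$, so there is a sequence $\pi=\rho_0\to\rho_1\to\cdots\to\rho_m=F_x$ in which each arrow is a single \textbf{cds} move; since $\pi$ is not a fixed point (hence $\pi\neq F_x$), we have $m\geq 1$. Let $(p,q)$ be the $\pi$-compatible pair realizing the first move, so that $\rho_1=\textbf{cds}_{p,q}(\pi)$. The remaining arrows $\rho_1\to\cdots\to\rho_m=F_x$ exhibit $F_x$ as a \textbf{cds} fixed point reachable from $\rho_1=\textbf{cds}_{p,q}(\pi)\in S_n$. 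Since $F_x=[x+1\;\cdots\;n\;1\;\cdots\;x]$ is precisely of the form to which Theorem \ref{thm:fixedpt} applies with $k=x+1$, the ``reachable $\Rightarrow$ in the pile'' direction of that theorem, applied to $\textbf{cds}_{p,q}(\pi)$, yields $x=k-1\in\SP(\textbf{cds}_{p,q}(\pi))$, which is the claim.

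The only genuine obstacle is conceptual rather than computational: the argument leans entirely on reading Theorem \ref{thm:fixedpt} as a two-way correspondence between strategic-pile elements and reachable fixed points, which is legitimate here since that theorem is available as a cited result. If one instead insisted on a proof intrinsic to the cycle structure of $C_\pi$ and did not invoke Theorem \ref{thm:fixedpt}, then the hard part would be to describe exactly how a single \textbf{cds} operation at a valid context $(p,q)$ transforms the disjoint-cycle decomposition of $C_\pi$, and to show that some available context leaves $0$, $n$, and the chosen $x$ positioned so that $x$ still lies between $n$ and $0$ in the $0$–$n$ cycle of the new $C$. That bookkeeping on $C_\pi$ is where all the difficulty would be concentrated, and it is precisely the work that Theorem \ref{thm:fixedpt} already packages away for us.
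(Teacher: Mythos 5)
Your argument cannot be compared against ``the paper's own proof'' for a simple reason: the paper has none. Theorem \ref{thm:strpileretention} is imported verbatim from \cite{ADAMYK} (its Theorem 2.21, the Strategic Pile Retention Theorem) and is used here as a black box. Judged on its own terms, your derivation is internally valid given the paper's other imported result, Theorem \ref{thm:fixedpt}: reading ``is a fixed point of $\pi$'' there as ``is a \textbf{cds} fixed point reachable from $\pi$'' (the intended reading, per the informal definition of the strategic pile in the introduction), each of your steps checks out. The hypothesis $|\SP(\pi)|>1$ rules out $\pi$ being a fixed point, since a fixed point's pile is either empty (identity) or the singleton $\{k-1\}$; hence any witnessing path from $\pi$ to $F_x$ has length $m\geq 1$; and truncating that path after its first move preserves reachability, so the ``reachable $\Rightarrow$ in the pile'' direction of Theorem \ref{thm:fixedpt} gives $x\in\SP(\textbf{cds}_{p,q}(\pi))$. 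Even the edge case $m=1$ is safe, since $\SP(F_x)=\{x\}$.

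The caveat --- which you half-acknowledge in your closing paragraph --- is that this is a formal derivation inside this paper's citation structure, not an independent proof, and it very likely inverts the logical order of the source. In \cite{ADAMYK} the retention theorem (2.21) precedes the fixed-point characterization (2.22, stated here as Theorem \ref{thm:fixedpt}), and the natural proof of the reachability direction of 2.22 is precisely by iterating retention: while the current permutation is not a fixed point, make a move that retains $x$ in the pile, and induct. So deriving 2.21 from 2.22 is almost certainly circular at the level of the literature, even though it is legitimate within this paper, where both statements are axioms. A telltale sign is that your argument proves something strictly stronger than the stated theorem (retention holds whenever $\pi$ is not a fixed point and $x\in\SP(\pi)$, even if $|\SP(\pi)|=1$), which suggests Theorem \ref{thm:fixedpt} already has retention baked in. The genuine mathematical content --- tracking how a single \textbf{cds} move at a valid context $(p,q)$ rearranges the disjoint cycles of $C_\pi$ so that $x$ stays between $n$ and $0$ --- is exactly what you defer to the cited theorem, and it is the content the source actually supplies.
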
 

Moreover, as a direct consequence of the Strategic Pile Removal Theorem, Theorem 2.19 of \cite{ADAMYK},
\begin{lemma}\label{lemma:strpileremoval}
For $\pi\in\mathcal{M}_{2n}$ and $\pi$- compatible pointers $p$ and $q$, $\SP(\textbf{cds}_{p^*,q^*}(\pi)) = \SP(\pi)\setminus\{\sigma_{2n,p}(n),\; \sigma_{2n,q}(n)\}$. 
\end{lemma}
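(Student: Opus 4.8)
The plan is to read the result off the Strategic Pile Removal Theorem (Theorem 2.19 of \cite{ADAMYK}) and then specialize to the maximal case, so that the only genuine work is to name the two pile-elements that disappear. First I would recall that, for an arbitrary permutation and an arbitrary compatible pair, the Strategic Pile Removal Theorem describes $\SP(\textbf{cds}_{p^*,q^*}(\pi))$ as $\SP(\pi)$ with the pile-elements attached to the two pointers of the context deleted, and that no element outside this pair ever leaves the pile. For $\pi\in\mathcal{M}_{2n}$ this removal is pinned down quantitatively by Proposition \ref{prop:maxpileprops}(4): a single \textbf{cds} move removes \emph{exactly} two elements from the strategic pile. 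Hence it suffices to exhibit two elements that are guaranteed to leave, and the exact count then forces them to be the only ones.

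Second, I would identify those two elements through the reachable fixed points. Performing \textbf{cds} on the context $(p^*,q^*)$ introduces an adjacency about each of $p^*$ and $q^*$, and, as observed in the text, such an adjacency persists under every subsequent \textbf{cds}; thus no permutation reachable from $\textbf{cds}_{p^*,q^*}(\pi)$ can have $p^*$ or $q^*$ as a broken (non-adjacent) pointer. The key combinatorial observation is that the fixed point $\sigma_{2n,m}$ is precisely the fixed point in which $m^*$ is the \emph{unique} pointer lacking an adjacency, every other pointer being doubled consecutively. Consequently $\sigma_{2n,p}$ and $\sigma_{2n,q}$ are no longer reachable after the move, and by Theorem \ref{thm:fixedpt} the loss of reachability of a fixed point is equivalent to the departure of its associated element from the strategic pile. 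This exhibits two elements that must be removed.

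Finally, I would record these two elements in the notation of the statement and combine the inclusions. Theorem \ref{thm:fixedpt} gives the correspondence between the reachable fixed point $\sigma_{2n,m}$ and the pile-element it contributes, and writing that element in the form appearing in the statement produces $\sigma_{2n,p}(n)$ for $p^*$ and $\sigma_{2n,q}(n)$ for $q^*$; together with Proposition \ref{prop:maxpileprops}(4), which caps the number removed at two, this yields $\SP(\textbf{cds}_{p^*,q^*}(\pi))=\SP(\pi)\setminus\{\sigma_{2n,p}(n),\,\sigma_{2n,q}(n)\}$. I expect the main obstacle to be exactly this last bookkeeping: one must reconcile three indexings — the pointer $p^*=(p,p+1)$ killed by the move, the offset of the fixed point $\sigma_{2n,p}$ whose reachability it destroys, and the value recorded in $\SP$ via Theorem \ref{thm:fixedpt} — and verify that the pointer $p^*$ corresponds to the named pile-element rather than to a shifted value. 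I would carry this out by tracking the effect of the move directly on the cycle of $C_\pi$ containing $0$ and $2n$, which is the most delicate part of the argument.
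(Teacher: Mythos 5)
The paper gives no argument for this lemma at all --- it is asserted as a direct consequence of Theorem 2.19 of \cite{ADAMYK} --- so your self-contained reconstruction is a genuinely different route, and its skeleton is sound: \textbf{cds} at $(p^*,q^*)$ creates adjacencies about $p^*$ and $q^*$ which persist under all further moves; $\sigma_{2n,m}$ is indeed the unique fixed point in which $m^*$ is the only pointer without an adjacency, so $\sigma_{2n,p}$ and $\sigma_{2n,q}$ become unreachable; every fixed point reachable from $\textbf{cds}_{p^*,q^*}(\pi)$ is reachable from $\pi$; and Proposition \ref{prop:maxpileprops}(4) caps the loss at exactly two pile elements, so these two account for all of it.

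The gap is precisely the step you postponed, and the identification you assert there is wrong. Theorem \ref{thm:fixedpt}, applied to $\textbf{cds}_{p^*,q^*}(\pi)$ with $k=p+1$, pairs the fixed point $\sigma_{2n,p}=[\,p+1\ \cdots\ 2n\ 1\ \cdots\ p\,]$ with the pile element $p$, i.e.\ with the \emph{last} entry $\sigma_{2n,p}(2n)$, not with the $n$-th entry $\sigma_{2n,p}(n)\equiv p+n \pmod{2n}$ named in the statement. So your argument actually proves $\SP(\textbf{cds}_{p^*,q^*}(\pi))=\SP(\pi)\setminus\{p,q\}$, and the literal statement is false: for $\pi=[\,8\ 1\ 5\ 2\ 4\ 3\ 7\ 6\,]$ of Example \ref{ex:strpile}, with $\SP(\pi)=\{1,\dots,7\}$, applying \textbf{cds} at the valid context $((1,2),(4,5))$ gives $[\,8\ 1\ 2\ 4\ 5\ 3\ 7\ 6\,]$, whose strategic pile is $\{2,3,5,6,7\}$; the departing elements are $1=p$ and $4=q$, whereas the statement names $\sigma_{8,1}(4)=5$ and $\sigma_{8,4}(4)=8$ (the latter cannot even lie in a strategic pile). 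What you have run into is an index slip in the lemma itself: everywhere else the paper writes the pile element attached to $p^*$ as $\sigma_{N,p}(N)$ for permutations of length $N$ (Definition \ref{def:Child}, Lemma \ref{exclosed}), which equals $p$; in Lemma \ref{lemma:strpileremoval} the subscript is $2n$ but the argument is $n$. A correct completion of your proof would therefore end by proving the corrected statement with $\sigma_{2n,p}(2n)$ and $\sigma_{2n,q}(2n)$, and by flagging the misprint --- the claim that the fixed-point correspondence produces $\sigma_{2n,p}(n)$ is the one step of your outline that cannot be carried out.
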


Thus, when player ONE executes \textbf{cds} on permutation $\pi\in\mathcal{M}_{2n}$ with a $\pi$-valid pointer context $(p^*,q^*)$, then by Lemma \ref{lemma:strpileremoval} the resulting permutation has strategic pile $\SP(\pi)\setminus\{\sigma_{2n,p}(n),\sigma_{2n,q}(n)\}$.  Now player TWO is confronted with executing  \textbf{cds} on the permutation $\mathbf{cds}_{p^*,q^*}(\pi)$. In this position the subset of the strategic pile corresponding to a win for TWO is $(\SP(\pi)\setminus A)\setminus\{\sigma_{2n,p}(n),\sigma_{2n,q}(n)\})$.

\begin{definition}\label{def:Child}  Let a permutation $\pi\in\textsf{S}_n$, a set $A\subseteq \SP(\pi)$, and a valid pointer context $(p^*,q^*)$ be given.
\begin{enumerate}
\item{The game $\CDS(\mathbf{cds}_{p^*,q^*}(\pi),(\SP(\pi)\setminus A)\setminus\{\sigma_{n,p}(n),\sigma_{n,q}(n)\})$ is said to be a \emph{child of} the game $\CDS(\pi,A)$. }
\item{The \emph{set of children of} $\CDS(\pi,A)$ is the set $\{\CDS(\mathbf{cds}_{p^*,q^*}(\pi),(\SP(\pi)\setminus A)\setminus\{\sigma_{n,p}(n),\sigma_{n,q}(n)\}): (p^*,q^*)\text{ a valid pointer context on }\pi\}$.}
\end{enumerate}
\end{definition}

With the notion of a child of a game defined, an analogue of the notion of the Sprague-Grundy Number \cite{GRUNDY, SPRAGUE} for a combinatorial game can now be defined for games of the form $\CDS(\pi,A)$. 
\begin{definition}\label{def:SpragueGrundy} Let $\pi\in \textsf{S}_n$ and a subset $A$ of $\SP(\pi)$ be given. Define $\SG(\pi,A)$, the Sprague-Grundy number of $\CDS(\pi,A)$ as follows:
\begin{enumerate}
\item{If $\pi$ is a \textbf{cds} fixed point, 
\[
 \SG(\pi,A) = \left\{
    \begin{array}{ll}
      1 & \mbox{ if } \pi\in A\\
      0 & \mbox{otherwise}
    \end{array}
    \right.
\]
}
\item{If $\pi$ is not a \textbf{cds} fixed point, 
\[
 \SG(\pi,A) = \min\{n\ge 0: n\not\in\{ \SG(\sigma,B): \CDS(\sigma,B) \mbox{ a child of }\CDS(\pi,A)\}\}
\]
}
\end{enumerate}
\end{definition}

\begin{lemma}\label{goodlemma} Let $\pi$ be a permutation. Let $p,q$ be $\pi$-compatible pointers such that the set of pointers other than $q$ $\pi$-compatible with $p$ is the same as the set of pointers other than $p$ $\pi$-compatible with $q$. Let $r,s$ be $\pi$-compatible pointers. Then, 
\begin{enumerate}
    \item The set of pointers other than $q$ that are $\mathbf{cds}_{r,s}(\pi)$-compatible with $p$ is the same as the set of pointers other than $p$ that are $\mathbf{cds}_{r,s}(\pi)$-compatible with $q$,
    \item $p$ and $q$ are $\mathbf{cds}_{r,s}(\pi)$-compatible.
\end{enumerate}
\end{lemma}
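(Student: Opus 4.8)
The plan is to pass to the chord-diagram description of the pointer word $W(\pi)$, in which $\pi$-compatibility of two pointers means exactly that their two chords cross (equivalently, that the pointers interleave in $W(\pi)$). In this language the hypothesis says that $p$ and $q$ cross the same set of chords and also cross each other; that is, $p$ and $q$ are \emph{true twins}. I first record the only consequence of this I need: for every third pointer $t$, the chord $t$ crosses $p$ if and only if it crosses $q$. In particular $p$ crosses $r$ iff $q$ crosses $r$, and $p$ crosses $s$ iff $q$ crosses $s$. (Here I use that $r,s\notin\{p,q\}$; if one of $r,s$ equalled $p$ or $q$ it would become an adjacency under $\mathbf{cds}_{r,s}$ and conclusion (2) could not hold, so that case lies outside the lemma.)

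The core step is to compute the effect of $\mathbf{cds}_{r,s}$ on crossings. After relabelling so that $r,s$ occur in the order $r\dots s\dots r\dots s$, write
\[
W(\pi)=A\;r\;B\;s\;C\;r\;D\;s\;E,
\]
the decomposition forced by their being a valid pointer context. From the block-interchange definition of \textbf{cds} I will show that $\mathbf{cds}_{r,s}$ turns $r$ and $s$ into adjacencies (adjacent repeated pointers, which thereafter cross nothing) and interchanges the two blocks $B$ and $D$, so that the surviving pointers occur in the order $A\,D\,C\,B\,E$, with $A,C,E$ fixed and the internal order of every block preserved. Classifying each surviving pointer $t$ by which of $r,s$ its chord crosses --- both, only $r$, only $s$, or neither --- this block interchange flips the crossing of a pair $(t,u)$ exactly when $t$ and $u$ lie in two \emph{different} classes among ``both'', ``only $r$'', ``only $s$'' (any pair involving a pointer that crosses neither $r$ nor $s$ is unaffected). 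This is the edge pivot of the interlacement graph along $rs$, and verifying the flip rule from the block interchange --- in particular handling the boundary occurrences of $r,s$ and checking that no block is internally reversed --- is the main technical obstacle, and the step I would write out in full.

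Granting the flip rule, both conclusions follow quickly. Because $p$ and $q$ cross $r$ identically and cross $s$ identically, they lie in the \emph{same} one of the four classes. Hence for every surviving pointer $t\notin\{p,q\}$ the crossing $p$--$t$ is flipped iff the crossing $q$--$t$ is flipped; since $p$--$t$ cross iff $q$--$t$ cross in $\pi$ (twinness), the same equivalence holds in $\mathbf{cds}_{r,s}(\pi)$. As $r$ and $s$ are now adjacencies and so compatible with nothing, they lie in neither pointer's compatibility set, and this equivalence is exactly conclusion (1). For conclusion (2), the crossing $p$--$q$ is flipped only if $p$ and $q$ lie in different classes; as they lie in the same class it is not flipped, so $p$ and $q$, which cross in $\pi$, still cross in $\mathbf{cds}_{r,s}(\pi)$ and remain compatible. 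Finally I would note that the degenerate possibilities (one of $B,D$ empty, or $p,q$ crossing neither $r$ nor $s$) are handled uniformly by the same rule.
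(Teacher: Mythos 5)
Your proposal is correct, but it reaches the conclusion by a genuinely different route than the paper. The paper's own proof is essentially a two-sentence citation of Theorem 2 of \cite{JANSEN}: the ``master list'' $M(r,s)$ of that paper records, for each pointer, which of the regions cut out by the two occurrences of $r$ and of $s$ it meets, and that theorem already encodes how compatibility transforms under $\mathbf{cds}_{r,s}$; the paper then only observes that the twins $p,q$ occupy the same columns of $M(r,s)$, and occupy each column an even number of times. You instead rebuild that structure from scratch: writing $W(\pi)=A\,r\,B\,s\,C\,r\,D\,s\,E$, you show the surviving pointers end up in the order $A\,D\,C\,B\,E$ (with $r,s$ becoming adjacencies), and that crossings toggle exactly between pointers lying in distinct classes among ``crosses both,'' ``crosses only $r$,'' ``crosses only $s$'' --- the edge-pivot rule for interlacement graphs. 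Both of your key claims are true: the block order $A\,D\,C\,B\,E$ holds in all four left/right attachment configurations of the occurrences of $r$ and $s$, and the flip rule survives the case analysis, including the delicate shared-block cases (e.g.\ $t$ with occurrences in $A,B$ against $u$ with occurrences in $A,D$), precisely because within-block orders are preserved. Your deduction of (1) and (2) is then sound: $p$ and $q$ lie in the same class, so $p$--$t$ toggles iff $q$--$t$ toggles, and $p$--$q$ itself never toggles. What each approach buys: the paper's proof is short but opaque and wholly dependent on external machinery; yours is self-contained and makes the invariance transparent (twins remain twins under pivoting), at the cost of the boundary-and-block case analysis you flag --- real but routine work that amounts to re-proving the needed part of Jansen's theorem. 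A further point in your favor: you make explicit the tacit hypothesis $\{r,s\}\cap\{p,q\}=\emptyset$, without which conclusion (2) is false; the paper's statement and proof pass over this, though all of its applications of the lemma do satisfy it.
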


\begin{proof} By  Theorem 2 of \cite{JANSEN}. In particular, $p$ and $q$ appear in exactly the same columns of the ``master list" $M(r,s)$, so a pointer $t$ is $\mathbf{cds}_{r,s}(\pi)$ compatible with $p$ if and only if it is $\mathbf{cds}_{r,s}(\pi)$ compatible with $q$. This proves (1). Moreover, $p$ and $q$ appear an even number of times (0 or 2) in each column of the ``master list", so they remain compatible in $\mathbf{cds}_{r,s}(\pi)$, proving (2).
\end{proof}

\begin{definition}\label{def:Pexcellent}  Let $P\subseteq P_n$ be a set of pointers.  A permutation $\pi\in\textsf{S}_n$ is $P-$excellent if 
\begin{enumerate}
    \item Any pair of pointers in $P$ is $\pi$-compatible,
    \item Pointers $p \in P_n\setminus P$ are not $\pi$-compatible with any other pointers. 
    \item $\SP(\pi)= P$. 
\end{enumerate}
The $P-$excellent permutations are exactly those which, after reduction of adjacencies, have maximal strategic pile and a maximal number of valid pointer contexts. 
\end{definition} 

\begin{lemma}\label{exclosed} If $\pi\in S_n$ is $P-$excellent, then $\mathbf{cds}_{p,q}(\pi)$ is $P\setminus\{\sigma_{n,p}(n),\sigma_{n,q}(n)\}-$excellent.  
\end{lemma}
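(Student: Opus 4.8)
The plan is to verify the three defining conditions of $P'$-excellence (Definition \ref{def:Pexcellent}) for the permutation $\sigma := \mathbf{cds}_{p^*,q^*}(\pi)$, where $p^*=(p,p+1)$ and $q^*=(q,q+1)$ are the pointers of the context acted on and $P' = P\setminus\{\sigma_{n,p}(n),\sigma_{n,q}(n)\}$. First I would pin down the bookkeeping, since getting the identification right is what makes all three conditions line up. Because $\sigma_{n,p} = [\,p{+}1\ \cdots\ n\ 1\ \cdots\ p\,]$ has $\sigma_{n,p}(n) = p$, and because the identification of $\SP(\pi)$ with the pointer set $P\subseteq P_n$ in Definition \ref{def:Pexcellent} sends a number $k$ to its pointer $k^*=(k,k+1)$ (via Theorem \ref{thm:fixedpt}), the two elements deleted from the pile are exactly $p^*$ and $q^*$; that is, $P' = P\setminus\{p^*,q^*\}$. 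I would state this explicitly at the outset. Condition (3), that $\SP(\sigma)=P'$, is then immediate from the Strategic Pile Removal result Lemma \ref{lemma:strpileremoval}: performing \textbf{cds} at $(p^*,q^*)$ removes precisely $\sigma_{n,p}(n)$ and $\sigma_{n,q}(n)$ from $\SP(\pi)=P$. (One should note $\sigma$ has two adjacencies about $p^*,q^*$, but after reducing them Proposition \ref{prop:maxpileprops}(5) returns a maximal strategic pile, so the removal machinery applies through the reduction bijection.)

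For condition (1), that every pair $a^*,b^*\in P'$ is $\sigma$-compatible, I would invoke Lemma \ref{goodlemma}, assigning $(a^*,b^*)$ the role of its distinguished pair and $(p^*,q^*)$ the role of the context on which \textbf{cds} acts. The hypotheses are supplied verbatim by the $P$-excellence of $\pi$: conditions (1)--(2) of Definition \ref{def:Pexcellent} force $a^*$ to be $\pi$-compatible with exactly $P\setminus\{a^*\}$ and $b^*$ with exactly $P\setminus\{b^*\}$, so the set of pointers other than $b^*$ that are $\pi$-compatible with $a^*$ and the set of pointers other than $a^*$ that are $\pi$-compatible with $b^*$ both equal $P\setminus\{a^*,b^*\}$. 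Part (2) of Lemma \ref{goodlemma} then yields that $a^*$ and $b^*$ remain compatible in $\sigma$.

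The step I expect to be the crux is condition (2): that every pointer outside $P'$ is $\sigma$-compatible with nothing. Here I would argue uniformly from the strategic pile rather than tracking individual pointers. By condition (3), any $t^*\in P_n\setminus P'$ has $t\notin\SP(\sigma)$. If $t^*$ were $\sigma$-compatible with some $s^*$, then performing \textbf{cds} at $(t^*,s^*)$ would, by the Strategic Pile Removal result (\cite{ADAMYK}, Theorem 2.19, as in Lemma \ref{lemma:strpileremoval}) together with Proposition \ref{prop:maxpileprops}(4), delete $\sigma_{n,t}(n)=t$ from $\SP(\sigma)$; but an element can be deleted only if it is present, forcing $t\in\SP(\sigma)$, a contradiction. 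Hence $t^*$ lies in no valid context. This single argument simultaneously covers the newly created adjacency pointers $p^*,q^*$ (which one could alternatively dispatch by recalling that \textbf{cds} introduces adjacencies about them and that adjacencies persist under further \textbf{cds}) and the pointers of $P_n\setminus P$ already incompatible in $\pi$.

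The delicate point, and the one I regard as the main obstacle, is exactly this converse use of the removal theorem: that $\sigma$-compatibility of a pointer forces the corresponding number into $\SP(\sigma)$. Lemma \ref{goodlemma} only propagates \emph{compatibility} forward and gives no direct control over incompatibility, so the incompatibility half of condition (2) cannot be read off from it and must be obtained from the strategic pile. Assembling conditions (1), (2), and (3) then shows $\sigma$ is $P'$-excellent, completing the proof.
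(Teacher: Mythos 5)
Your handling of conditions (1) and (3) coincides with the paper's proof: for (1) you apply part (2) of Lemma \ref{goodlemma} with $(a^*,b^*)$ in the distinguished role and $(p^*,q^*)$ as the executed context, exactly as the paper does; for (3) you invoke the strategic-pile removal identity (the paper cites the general form, Lemma 2.15 of \cite{ADAMYK}, which sidesteps the issue that Lemma \ref{lemma:strpileremoval} is stated only for $\mathcal{M}_{2n}$). The genuine gap is in condition (2) --- the very step you call the crux. Your argument rests on the principle that if a pointer $t^*$ is compatible with some pointer in $\sigma=\mathbf{cds}_{p^*,q^*}(\pi)$, then $t\in\SP(\sigma)$, and you justify it by saying that a \textbf{cds} move at $(t^*,s^*)$ would ``delete'' $t$ from $\SP(\sigma)$, which requires $t$ to be present. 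But the removal results are set-difference identities: $\SP(\mathbf{cds}_{t^*,s^*}(\sigma))=\SP(\sigma)\setminus\{t,s\}$ holds vacuously when $t\notin\SP(\sigma)$, so no contradiction arises. Worse, the principle itself is false in general: in $[2\ 1\ 3\ 4]$ the pointers $(1,2)$ and $(2,3)$ form a valid context while the strategic pile is empty (the permutation is \textbf{cds}-sortable), so your reasoning would show that this permutation has no valid contexts at all. What Theorem 2.19 of \cite{ADAMYK} supplies is only the opposite implication (pile membership implies compatibility with some pointer), which is how the paper uses it elsewhere; and Proposition \ref{prop:maxpileprops}(4) is unavailable here because $\sigma$ no longer has maximal strategic pile, having already lost two elements.

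The paper closes condition (2) with adjacencies, not the strategic pile: the move $\mathbf{cds}_{p^*,q^*}$ creates adjacencies about $p^*$ and $q^*$; every pointer outside $P$ is already part of an adjacency in $\pi$ (this is how Definition \ref{def:Pexcellent} is read --- its closing sentence identifies the pointers outside $P$ with the adjacency pointers); adjacencies persist under any further \textbf{cds}; and a pointer involved in an adjacency can never occur in a valid pointer context. Hence every pointer outside $P\setminus\{\sigma_{n,p}(n),\sigma_{n,q}(n)\}$ is $\sigma$-compatible with nothing. Substituting this adjacency argument for your pile-membership argument repairs the proof; as written, condition (2) is unproven.
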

\begin{proof} Let $\sigma_{n,r}(n),\sigma_{n,s}(n) \in P\setminus\{\sigma_{n,p}(n),\sigma_{n,q}(n)\}$. The set of pointers other than $r$ $\pi$-compatible with $s$ is the same as the set of pointers other than $s$ $\pi$-compatible with $r$, and $r$ is $\pi$-compatible with $s$ , so by part 2 of Lemma \ref{goodlemma}, $r$ and $s$ are $\mathbf{cds}_{p,q}(\pi)$-compatible. Thus, condition (1) is satisfied.

Applying \textbf{cds} with valid pointer context $p,q$ creates adjacencies at both $p$ and $q$. Moreover, any pointer $r$ that is part of an adjacency in $\pi$ is part of an adjacency in $\mathbf{cds}_{p,q}(\pi)$, so every pointer not in $P\setminus\{\sigma_{n,p}(n),\sigma_{n,q}(n)\}$ is part of an adjacency in $\mathbf{cds}_{p,q}(\pi)$, proving condition 2.

Let $r$ be a pointer not part of an adjacency in $\mathbf{cds}_{p,q}(\pi)$. Then $r$ was not part of an adjacency in $\pi$, so $\sigma_{n,r}(n)\in \SP(\pi)$. By \cite{ADAMYK}, Lemma 2.15, $\SP(\mathbf{cds}_{p,q}(\pi))=\SP(\pi)\setminus\{\sigma_{n,p}(n),\sigma_{n,q}(n)\}$, so $\sigma_{n,r}(n)\in \SP(\mathbf{cds}_{p,q}(\pi))$. Thus, condition (3) is satisfied.
\end{proof}

\begin{remark}
By Lemma \ref{evcompat}, if $\pi$ is $P-$excellent, then $\vert P\vert$ is odd. 
\end{remark}

\begin{theorem}
Let $g_{2m}(a)$ be the Grundy number of the game $\CDS(\pi,A)$, where $\pi\in S_{n}$ is a permutation which is $P-$excellent for a set of pointers $P$ with $|P|=2m-1$ and $|A|=a$.

$g_{2m}(a)=\begin{cases} 0, & a\leq m-2\text{ or } a=m-1\text{ and } m\text{ is odd}\\
1, & a\geq m+1 \text{ or } a=m\text{ and }m\text{ is odd}\\
2, & a=m\text{ or } m-1\text{ and } m\text{ is even}

\end{cases}$

\end{theorem}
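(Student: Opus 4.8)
The plan is to induct on $m$, reducing the game to a simple counting recursion. The engine is Lemma \ref{exclosed}: if $\pi$ is $P$-excellent with $|P|=2m-1$, then every child of $\CDS(\pi,A)$ is again a game on a $P$-excellent permutation, now with $|P|=2m-3=2(m-1)-1$. Because $P$-excellence forces every pair of strategic-pile pointers to be $\pi$-compatible, player ONE may remove \emph{any} two elements of $\SP(\pi)$; combined with Lemma \ref{lemma:strpileremoval} and Definition \ref{def:Child}, the child game has strategic pile $S' := \SP(\pi)\setminus\{\text{two elements}\}$ and distinguished set $A' = S'\setminus(A\cap S')$. Thus the only data surviving into the recursion are $m$ and $a=|A|$, so I would prove the stronger statement that \emph{every} $P$-excellent game with these parameters has Grundy number $g_{2m}(a)$; the well-definedness asserted by the theorem then falls out of the induction itself.

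I would next extract the recursion explicitly. If ONE removes a pair of which $i$ lie in $A$, then $i\in\{0,1,2\}$ and the child has parameter $a'=2m-3-a+i$, feasible precisely when $i\le a$ and $2-i\le(2n-1)-a$ is replaced by $2-i\le (2m-1)-a$. Hence
\[
g_{2m}(a) = \operatorname{mex}\bigl\{\, g_{2(m-1)}(2m-3-a+i) : i\in\{0,1,2\}\ \text{feasible}\,\bigr\},
\]
where $\operatorname{mex}$ is the minimal excludant. The base case $m=1$ is immediate: then $\pi$ is a \textbf{cds} fixed point, so Definition \ref{def:SpragueGrundy} gives $g_2(0)=0$ and $g_2(1)=1$, matching the formula.

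For the inductive step I would split on the parity of $m$, since the child level $m-1$ has opposite parity. When $m$ is even the child values $g_{2(m-1)}(\cdot)$ are binary, equal to $0$ for $a'\le m-2$ and $1$ for $a'\ge m-1$; running the reachable $a'$ through this across the ranges $a\le m-2$, $a=m-1$, $a=m$, $a\ge m+1$ yields child-value sets $\{1\}$, $\{0,1\}$, $\{0,1\}$, $\{0\}$, whose $\operatorname{mex}$ are $0,2,2,1$ — exactly the even-$m$ row of the formula. When $m$ is odd the child values take three values, equal to $2$ exactly at $a'\in\{m-2,m-1\}$; here the ranges $a\le m-1$ and $a\ge m$ produce child-value sets contained in $\{1,2\}$ and equal to $\{0,2\}$ or $\{0\}$ respectively, giving $\operatorname{mex}$ values $0$ and $1$, and hence the clean threshold at $a=m$.

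The conceptual work is entirely in the first two paragraphs; the labor — and the only place to slip — is the boundary bookkeeping in the last. I would verify, at the extreme values of $a$ (near $0$ and near $2m-1$) and at the transition points $a=m-1,m$, that the feasibility constraints on $i$ leave exactly the child values claimed: that both a $0$ \emph{and} a $1$ are realized in the two ``$g=2$'' cells, that no $0$ appears in any ``$g=0$'' cell, and that no $1$ appears in any ``$g=1$'' cell. Each is a short check using the feasibility inequalities above, but all of them must be confirmed for the $\operatorname{mex}$ values to come out as stated.
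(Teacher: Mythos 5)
Your proof is correct and follows essentially the same route as the paper's: induction on $m$ via Lemma \ref{exclosed}, the child recursion $a' = 2m-3-a+i$ with $i\in\{0,1,2\}$ subject to feasibility, and a mex computation split by parity. In fact your write-up is more complete than the paper's, which states the same recursion case-by-case (for $a\in\{0,1,2m-2,2m-1\}$ and the generic case) and then dismisses the parity bookkeeping with ``the formula then follows by induction''; your explicit treatment of the transition points $a=m-1,m$ and the observation that well-definedness of $g_{2m}(a)$ is itself part of the induction are exactly the details the paper leaves implicit.
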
 
\begin{proof} By Lemma \ref{exclosed}, the children of a $P-$excellent permutation with $|P|=2m-1$ are $P-$excellent permutations with $|P|=2m-3$. We thus proceed by induction.

The base case $m=1$ is clear, since $\pi$ would already be $P-$excellent. Let $m\geq 2$. If $a=2m-1$, then $g_{2m}(a)$ is the minimal excludant of $\{g_{2m-2}(0)\}$. If $a=2m-2$, then $g_{2m}(a)$ is the minimal excludant of $\{g_{2m-2}(0),g_{2m-2}(1)\}$. If $a=0$, then $g_{2m}(a)$ is the minimal excludant of $\{g_{2m-2}(2m-3)\}$. If $a=1$, then $g_{2m}(a)$ is the minimal excludant of $\{g_{2m-2}(2m-3),g_{2m-2}(2m-4)\}$. Otherwise, $g_{2m}(a)$ is the minimal excludant of $\{g_{2m-2}(2m-3-a), g_{2m-2}(2m-2-a),g_{2m-2}(2m-1-a)\}$. The formula then follows by induction. 
\end{proof}

\begin{corollary}\label{xwin} If $\pi$ is $P-$excellent and $|A|>\SP(\pi)-|A|$, then ONE has a winning strategy in $\CDS(\pi,A)$.
\end{corollary}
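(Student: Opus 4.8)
The plan is to reduce the corollary to an arithmetic check against the preceding theorem, via the principle that ONE has a winning strategy in $\CDS(\pi,A)$ exactly when its Sprague--Grundy number $\SG(\pi,A)$ is nonzero. First I would establish this principle by induction on $\lvert\SP(\pi)\rvert$, which drops by $2$ at every move by Lemma \ref{lemma:strpileremoval}, so the game tree is finite and the induction is well-founded. For the base case, a \textbf{cds} fixed point admits no moves and ONE wins exactly when the fixed point lies in $A$; this matches $\SG=1$ versus $\SG=0$ in Definition \ref{def:SpragueGrundy}.

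For the inductive step, the essential observation is the set-complementation built into Definition \ref{def:Child}: a child of $\CDS(\pi,A)$ is a game $\CDS(\mathbf{cds}_{p^*,q^*}(\pi),B)$ in which the player to move is TWO and the assigned set $B=(\SP(\pi)\setminus A)\setminus\{\sigma_{n,p}(n),\sigma_{n,q}(n)\}$ is the complement of $A$ inside the reduced pile $\SP(\mathbf{cds}_{p^*,q^*}(\pi))=\SP(\pi)\setminus\{\sigma_{n,p}(n),\sigma_{n,q}(n)\}$. Since every continuation terminates at a fixed point lying in this reduced pile, TWO (as mover of the child) wins the child precisely when ONE would lose the original game. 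Consequently ONE wins iff some move leads to a child that is a loss for its mover, iff by the inductive hypothesis some child has $\SG=0$, iff the minimal excludant $\SG(\pi,A)$ is nonzero. This proves the principle.

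With the principle in hand I would translate the hypothesis. Because $\pi$ is $P$-excellent, condition (3) of Definition \ref{def:Pexcellent} gives $\SP(\pi)=P$, so $\lvert\SP(\pi)\rvert=2m-1$ and $\SG(\pi,A)=g_{2m}(a)$ with $a=\lvert A\rvert$. The hypothesis $\lvert A\rvert>\lvert\SP(\pi)\rvert-\lvert A\rvert$ reads $2a>2m-1$, i.e. $a\ge m$. Reading off the piecewise formula of the preceding theorem for $a\ge m$ yields $g_{2m}(a)=1$ when $a\ge m+1$ or when $a=m$ with $m$ odd, and $g_{2m}(a)=2$ when $a=m$ with $m$ even; in every case $g_{2m}(a)\ne 0$. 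Hence $\SG(\pi,A)\ne 0$, and ONE has a winning strategy.

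I expect the main obstacle to be the first two paragraphs rather than the closing arithmetic. Because the payoff is determined by membership of the terminal fixed point in $A$ rather than by who moves last, this is not a normal-play game, and the usual ``Grundy number nonzero iff the first player wins'' slogan must be re-derived from scratch. The delicate point is verifying that the role reversal together with the complementation of the assigned set in Definition \ref{def:Child} interact so that a move to a $\SG=0$ child is genuinely winning for ONE; once that interaction is pinned down, the remaining steps are purely mechanical.
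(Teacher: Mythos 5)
Your proof is correct and follows essentially the same route as the paper: the corollary is read off from the preceding Grundy-number theorem, since $|A| > |\SP(\pi)| - |A|$ with $|\SP(\pi)| = |P| = 2m-1$ forces $a \geq m$, where the formula gives $g_{2m}(a) \in \{1,2\}$, hence nonzero. The paper states the corollary without any written proof, so your first two paragraphs simply supply the standard ``mover wins iff the Sprague--Grundy number is nonzero'' principle (correctly adapted to the role-swapping, set-complementing children of Definition \ref{def:Child}) that the paper leaves implicit.
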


\subsection*{Winning Strategies for ONE} \leavevmode

There exists a bound on the size of $A$ relative to the strategic pile necessary for a player to have a winning strategy. 

\begin{theorem}[\cite{ADAMYK}, Theorem 4.3]Let $\pi\in S_{n}$ and $A\subseteq$ SP$(\pi)$.
    \begin{enumerate}
        \item If $\vert A\vert \geq \frac{3}{4} \vert$SP$(\pi)\vert$ then ONE has a winning strategy in \textbf{cds}$(\pi,A)$.
        \item If $\vert A\vert < \frac{1}{4}\vert$SP$(\pi)\vert-2$ then TWO has a winning strategy in \textbf{cds}$(\pi,A)$.
    \end{enumerate}
\end{theorem}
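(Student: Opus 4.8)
The plan is to recast the game $\CDS(\pi,A)$ as a token-removal game and then produce explicit strategies for each regime. By Theorem \ref{thm:fixedpt} the non-identity \textbf{cds} fixed points reachable from $\pi$ are in bijection with the elements of $\SP(\pi)$, and the winner is determined solely by the terminal fixed point: ONE wins exactly when the surviving strategic-pile element lies in $A$. I restrict attention to the nontrivial case where $\pi$ is not \textbf{cds}-sortable, so that $\SP(\pi)\neq\emptyset$ (Theorem \ref{thm:sortable}). Writing $s=\abs{\SP(\pi)}$ and $a=\abs A$, I first record the arithmetic facts that drive everything: each application of \textbf{cds} deletes exactly two elements of the strategic pile (the Strategic Pile Removal results underlying Lemma \ref{lemma:strpileremoval} and Proposition \ref{prop:maxpileprops}(4)), and since play must terminate at a fixed point carrying a single pile element, $s$ is odd and the game lasts exactly $T=(s-1)/2$ moves, with ONE moving on the odd-numbered turns. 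Thus the whole contest is a game in which $s$ tokens, $a$ of them ``good'' (lying in $A$), are deleted two at a time, and ONE wins iff the last surviving token is good.

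For part (1) I would exhibit a defensive strategy for ONE built on the Strategic Pile Retention Theorem (Theorem \ref{thm:strpileretention}): whenever more than one pile element remains and at least one lies in $A$, ONE may select a valid pointer context that preserves a prescribed good token. ONE plays to protect $A$ on each of ONE's turns and never voluntarily deletes the last good token. The heart of the argument is a worst-case count of how many good tokens the opponent can destroy: across its $\lfloor T/2\rfloor$ turns TWO deletes at most $2\lfloor T/2\rfloor$ tokens, all possibly good, while on each of ONE's $\lceil T/2\rceil$ turns the move-availability constraint may compel ONE to sacrifice at most one good token. Hence at most $2\lfloor T/2\rfloor+\lceil T/2\rceil\approx\tfrac32 T=\tfrac34(s-1)$ good tokens are lost, so $a\ge\tfrac34 s$ leaves at least one good token to survive and ONE wins.

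Part (2) I would obtain by the dual strategy, interchanging the roles of $A$ and $\SP(\pi)\setminus A$: now TWO runs the ``protect the complement'' strategy, again powered by Theorem \ref{thm:strpileretention}. The only asymmetry is that ONE always moves first and therefore receives one extra turn, which weakens TWO's guarantee relative to the mirror image of part (1). Tracking this single-turn handicap through the same count is exactly what produces the additive correction, converting the naive threshold $\abs{\SP(\pi)\setminus A}>\tfrac34 s$ into the stated $\abs A<\tfrac14 s-2$.

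I expect the main obstacle to be precisely the move-availability constraint, which is also the source of the slack in the constants. A player does not get to delete an arbitrary pair of tokens; only the pair determined by some valid pointer context may be removed, and Theorem \ref{thm:strpileretention} guarantees control over only one of the two deleted tokens, not both. Converting this one-sided control into the two-sided counting bound---in particular justifying the ``at most one forced sacrifice per turn'' estimate rigorously---is the delicate step. It is instructive to contrast this with the $P$-excellent case, where every pair of surviving pointers is compatible, moves are maximally flexible, and Corollary \ref{xwin} accordingly sharpens the threshold all the way down to $\abs A>\tfrac12\abs{\SP(\pi)}$; the loss of that flexibility for general $\pi$ is exactly what forces the weaker $\tfrac34$ and $\tfrac14-2$ bounds here.
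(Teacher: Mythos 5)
First, a point of order: the paper never proves this statement---it is imported verbatim from \cite{ADAMYK} (Theorem 4.3) as background for Section 5---so your argument must stand entirely on its own. It does not, because the token-removal model on which everything rests is false at the stated level of generality. You assume that every \textbf{cds} move deletes exactly two elements of $\SP(\pi)$, that $\vert\SP(\pi)\vert$ is odd, and that the game lasts exactly $(\vert\SP(\pi)\vert-1)/2$ moves. All three facts are special to permutations with maximal strategic pile: Proposition \ref{prop:maxpileprops}(4) and Lemma \ref{lemma:strpileremoval} are stated, and are only true, under that hypothesis, and the correct duration in general is $\frac{n+1-c(\pi)}{2}-1$ (Theorem \ref{thm:cdsduration}), which is not a function of $\vert\SP(\pi)\vert$. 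Concretely, $\pi=[2\ 1\ 4\ 3]\in S_4$ has $\SP(\pi)=\{1,3\}$, of even size; the game lasts exactly one move, and either available move (leading to the fixed point $[2\ 3\ 4\ 1]$ or $[4\ 1\ 2\ 3]$) removes exactly \emph{one} element of the pile. In general a move removes at most two pile elements---possibly zero or one---so your parity claim, your move count $T=(s-1)/2$, and the identification of ``winning'' with ``a good token is the unique survivor'' all collapse outside the case $\pi\in\mathcal{M}_{2n}$.

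Second, even in the maximal-pile case where the model is accurate, the decisive step is missing, and you say so yourself. Theorem \ref{thm:strpileretention} lets ONE protect one prescribed pile element per move; it gives no control over the second element that the move removes, and in particular does not guarantee ONE a move that deletes at most one element of $A$: a move retaining $x$ may still destroy two elements of $A\setminus\{x\}$. So the estimate ``at most one forced sacrifice per turn of ONE,'' which is what produces your bound $\tfrac34(s-1)$, is not a consequence of any result you invoke---it is precisely the content a proof must supply, and the natural repair is a different strategy altogether (have ONE use the Strategic Pile \emph{Removal} Theorem to actively delete elements of $\SP(\pi)\setminus A$, rather than passively protect $A$, which changes the bookkeeping). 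Part (2) has the same defect in stronger form: the additive constant $-2$ is never derived; ``tracking the single-turn handicap'' is an assertion, not an argument, and nothing in your setup explains why the threshold is not, say, $\tfrac14\vert\SP(\pi)\vert-1$. As written, the proposal is a plausible outline for $\pi\in\mathcal{M}_{2n}$ with an acknowledged hole at its center, not a proof of the cited theorem.
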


Intuitively, increasing the size of $A$ should provide an advantage to ONE. Despite this, examples given in \cite{JANSEN} show that if $\epsilon>0$, there are permutations $\pi$ and subsets $A\subseteq \SP(\pi)$ with $|A|\geq (3/4-\epsilon)|\SP(\pi)|$ for which ONE does not have a winning strategy in $\CDS(\pi,A)$. However, if a certain kind of ``symmetry" is imposed on a large subset of the elements of $\SP(\pi)$, the constant $3/4$ can be improved to be arbitrarily close to $1/2$. We formalize this as follows. 

\begin{definition}[$P-$good] Let $\pi$ be a permutation and $P$ be a set of pointers. Call $\pi$ $P-$good if \begin{enumerate}
    \item Every pair of pointers in $P$ is compatible.
    \item For all pointers $p,q\in P$, if $r$ is another pointer, $r$ and $p$ are compatible if and only if $r$ and $q$ are compatible,
    \item Given any pointer $p$ not part of an adjacency, $\sigma_{n,p}(n)\in\SP(\pi)$.
\end{enumerate} 
\end{definition}

\begin{lemma}\label{popgood} Let $\pi\in \mathcal{M}_{2n}$ and let $P$ be the set of universal pointers in $\pi$. Then, $\pi$ is $P-$good.
\end{lemma}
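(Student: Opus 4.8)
The plan is to verify the three defining conditions of $P$-goodness directly, treating conditions (1) and (2) as essentially immediate from the definition of a universal pointer, and reserving the real content for condition (3), which will follow from the fact that a permutation in $\mathcal{M}_{2n}$ has the largest possible strategic pile.

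First I would dispatch conditions (1) and (2). By definition, each pointer in $P$ is $\pi$-universal, i.e.\ compatible with every other pointer; hence any two distinct members of $P$ are compatible, which is condition (1). For condition (2), fix $p,q\in P$ and a third pointer $r\notin\{p,q\}$. Since both $p$ and $q$ are universal, $r$ is compatible with $p$ and $r$ is compatible with $q$, so both sides of the asserted biconditional are true and it holds automatically. (The intended reading of ``another pointer'' is $r\neq p,q$, which is needed since a pointer cannot form a valid context with itself.)

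The crux is condition (3): for every pointer $p^*=(p,p+1)$ not lying in an adjacency, I must show $\sigma_{2n,p}(2n)\in\SP(\pi)$. I would first read off $\sigma_{2n,p}(2n)=p$ from the last entry of $\sigma_{2n,p}=[\,p+1\ \cdots\ 2n\ 1\ \cdots\ p\,]$. Next I would compute the strategic pile of $\pi$ exactly: $C_\pi$ is a permutation of $\{0,1,\dots,2n\}$, and $\SP(\pi)$ consists of the entries strictly between $2n$ and $0$ in the cycle of $C_\pi$ containing both, all of which lie in $\{1,\dots,2n-1\}$. Since $\pi\in\mathcal{M}_{2n}$ has maximal strategic pile, $|\SP(\pi)|=2n-1$ by Definition \ref{def:maxpile}, and this forces $\SP(\pi)=\{1,\dots,2n-1\}$. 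As $p$ ranges over the pointer indices $\{1,\dots,2n-1\}$, we get $\sigma_{2n,p}(2n)=p\in\SP(\pi)$, establishing condition (3). (By Proposition \ref{prop:maxpileprops}(2) there are in fact no adjacencies in $\pi$ at all, so the hypothesis of (3) is met by every pointer; this is consistent with the computation above.)

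I do not expect a genuine obstacle: conditions (1) and (2) are pure bookkeeping on the definition of universality, and the only step requiring an argument is the explicit determination $\SP(\pi)=\{1,\dots,2n-1\}$, which is a one-line counting argument from the maximal strategic pile bound. The single point to watch is matching the generic length $n$ appearing in the definition of $P$-goodness with the actual length $2n$ of $\pi$, and confirming the identity $\sigma_{2n,p}(2n)=p$ that drives condition (3).
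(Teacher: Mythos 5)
Your proposal is correct and follows essentially the same route as the paper's (very terse) proof: conditions (1) and (2) are immediate from universality of the pointers in $P$, and condition (3) holds because maximality forces $\SP(\pi)=\{1,\dots,2n-1\}$, which contains $\sigma_{2n,p}(2n)=p$ for every pointer. Your version simply spells out the details (the identification $\sigma_{2n,p}(2n)=p$ and the counting argument $|\SP(\pi)|=2n-1$ with $\SP(\pi)\subseteq\{1,\dots,2n-1\}$) that the paper leaves implicit.
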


\begin{proof} Conditions (1) and (2) are satisfied, as each pointer in $P$ is compatible with every other pointer. Condition (3) is satisfied, since the strategic pile is maximal.
\end{proof}

\begin{lemma}\label{goodclosed} Let $\pi\in S_n$ be $P-$good. Then $\mathbf{cds}_{p,q}(\pi)$ is $P\setminus\{\sigma_{n,p}(n),\sigma_{n,q}(n)\}-$good.
\end{lemma}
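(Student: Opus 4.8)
The plan is to verify, one at a time, the three defining conditions of $P'$-good for the permutation $\mathbf{cds}_{p,q}(\pi)$, where $P' = P\setminus\{\sigma_{n,p}(n),\sigma_{n,q}(n)\}$, closely paralleling the proof of Lemma \ref{exclosed}. Throughout, the pointers $p,q$ at which \textbf{cds} is performed are $\pi$-compatible (so that $\mathbf{cds}_{p,q}(\pi)$ is defined), but no assumption that $p,q\in P$ is needed.

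For conditions (1) and (2) I would lean entirely on Lemma \ref{goodlemma}. Fix any two pointers $r,s\in P'$. Since $\pi$ is $P$-good, condition (1) gives that $r$ and $s$ are $\pi$-compatible, and condition (2) gives that a third pointer is $\pi$-compatible with $r$ exactly when it is $\pi$-compatible with $s$; together these say precisely that the set of pointers other than $s$ that are $\pi$-compatible with $r$ coincides with the set of pointers other than $r$ that are $\pi$-compatible with $s$. This is exactly the hypothesis of Lemma \ref{goodlemma}, with the roles of its symmetric compatible pair played by $r,s$ and the roles of its \textbf{cds} pair played by $p,q$. Part (2) of that lemma then yields that $r$ and $s$ remain compatible in $\mathbf{cds}_{p,q}(\pi)$, establishing condition (1); part (1) yields that the external compatibility sets of $r$ and $s$ again agree in $\mathbf{cds}_{p,q}(\pi)$, which is condition (2).

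For condition (3) I would argue as in Lemma \ref{exclosed}. Let $t$ be a pointer that is not part of an adjacency in $\mathbf{cds}_{p,q}(\pi)$. Because adjacencies persist under \textbf{cds}, $t$ cannot have been part of an adjacency in $\pi$ either, so $P$-good condition (3) for $\pi$ gives $\sigma_{n,t}(n)\in\SP(\pi)$. Since \textbf{cds} creates adjacencies at both $p$ and $q$, and $t$ lies in no adjacency of $\mathbf{cds}_{p,q}(\pi)$, we have $t\neq p,q$, hence $\sigma_{n,t}(n)\notin\{\sigma_{n,p}(n),\sigma_{n,q}(n)\}$. Invoking the strategic-pile removal formula $\SP(\mathbf{cds}_{p,q}(\pi))=\SP(\pi)\setminus\{\sigma_{n,p}(n),\sigma_{n,q}(n)\}$ (\cite{ADAMYK}, Lemma 2.15) then gives $\sigma_{n,t}(n)\in\SP(\mathbf{cds}_{p,q}(\pi))$, which is condition (3).

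The only real obstacle is the bookkeeping in the first step: one must check that $P$-good condition (2), phrased as an ``if and only if'' quantified over third pointers, is genuinely equivalent to the set equality required by Lemma \ref{goodlemma}, being careful that the excluded pointers $r,s$ cause no boundary discrepancies (neither appears in its own compatibility set), and that the role assignment between the two lemmas' quadruples of pointers is kept consistent. Once this matching is pinned down, conditions (1) and (2) fall out immediately from the two parts of Lemma \ref{goodlemma}, and condition (3) is a routine adjacency-persistence argument.
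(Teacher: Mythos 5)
Your proposal is correct and follows essentially the same route as the paper's own proof: conditions (1) and (2) via the two parts of Lemma \ref{goodlemma} applied to pairs $r,s\in P\setminus\{\sigma_{n,p}(n),\sigma_{n,q}(n)\}$ (with the role swap between the symmetric pair and the \textbf{cds} pair handled exactly as the paper does), and condition (3) via adjacency persistence together with the strategic pile removal formula of \cite{ADAMYK}. Your extra observation that $t\neq p,q$, so $\sigma_{n,t}(n)\notin\{\sigma_{n,p}(n),\sigma_{n,q}(n)\}$, is a small detail the paper leaves implicit, but it is the same argument.
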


\begin{proof}  Let $r,s\in P\setminus\{\sigma_{n,p}(n),\sigma_{n,q}(n)\}$. The set of pointers other than $s^*$ compatible with $r^*$ in $\pi$ is the same as the set of pointers other than $r^*$ compatible with $s^*$ in $\pi$. Moreover, $r^*$ and $s^*$ are $\pi$-compatible. Thus, part 2 of Lemma \ref{goodlemma} gives that $r^*$ and $s^*$ are $\mathbf{cds}_{p,q}(\pi)$-compatible, proving condition (1). Moreover, by part 1 of Lemma \ref{goodlemma}, the set of pointers other than $s^*$ $\mathbf{cds}_{p,q}(\pi)$-compatible with $r^*$ is the same as the set of pointers other than $r^*$ $\mathbf{cds}_{p,q}(\pi)$- compatible with $s^*$, so condition (2) is satisfied. 

Let $r$ be a pointer not part of an adjacency in $\mathbf{cds}_{p,q}(\pi)$. Then $r$ was not part of an adjacency in $\pi$, so $\sigma_{n,r}(n)\in \SP(\pi)$. By \cite{ADAMYK}, Lemma 2.15, $\SP(\mathbf{cds}_{p,q}(\pi))=\SP(\pi)\setminus\{\sigma_{n,p}(n),\sigma_{n,q}(n)\}$, so $\sigma_{n,r}(n)\in \SP(\mathbf{cds}_{p,q}(\pi))$. Thus, condition (3) is satisfied.
\end{proof}

\begin{lemma}\label{bigmovesbigpop} Let $b>0$ and let $n$ be such that $2n-1>b$. If $\pi\in \mathcal{M}_{2n}$ has at least $\binom{2n-1}{2}-b$ valid pointer contexts, then there are at least $2n-1-b$ universal pointers.
\end{lemma}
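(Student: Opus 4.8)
The plan is to recast the statement in terms of an \emph{incompatibility graph} and to run a degree-counting (handshake) argument driven by the parity result of Corollary \ref{evincompat}. Let $G_\pi$ be the graph whose vertices are the $2n-1$ pointers of $\pi$ and whose edges join pairs of pointers that fail to be $\pi$-compatible, i.e.\ pairs that do \emph{not} form a valid pointer context. Since $\pi\in\mathcal{M}_{2n}$ has even length and hence no adjacencies (Proposition \ref{prop:maxpileprops}), all $2n-1$ pointers genuinely occur, and the contraction to $M_{2n}$ preserves the compatibility structure, so the lemmas of this section apply to $\pi$ directly. In this language a universal pointer is exactly an isolated vertex of $G_\pi$, and a non-universal pointer is a vertex of positive degree.

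First I would bound the number of edges of $G_\pi$. There are $\binom{2n-1}{2}$ pairs of pointers in all, and by hypothesis at least $\binom{2n-1}{2}-b$ of them form valid pointer contexts; hence at most $b$ pairs are incompatible, so $G_\pi$ has at most $b$ edges.

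Next I would bound vertex degrees from below. By Corollary \ref{evincompat} every pointer is incompatible with an even number of pointers, so each vertex of $G_\pi$ has even degree. A non-universal pointer is incompatible with at least one other pointer, and evenness upgrades this to at least two; thus every non-universal pointer has degree at least $2$.

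Finally I would combine the two bounds. Writing $a$ for the number of non-universal pointers, the handshake identity gives $\sum_v \deg(v) = 2\,\abs{E(G_\pi)} \le 2b$, while summing the degree lower bound over the non-universal vertices gives $\sum_v \deg(v) \ge 2a$. Hence $2a \le 2b$, so $a \le b$, and the number of universal pointers is $(2n-1)-a \ge 2n-1-b$; the assumption $2n-1>b$ merely ensures this count is positive. I do not expect a genuine obstacle here: the single essential input is the parity statement of Corollary \ref{evincompat}, which is exactly what forces each non-universal pointer to contribute degree at least $2$ and makes the counting tight.
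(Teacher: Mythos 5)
Your proof is correct and is essentially the paper's own argument: the paper likewise invokes Corollary \ref{evincompat} to force each non-universal pointer to be incompatible with at least two others, bounds the number of incompatible pairs by the hypothesis, and concludes there are at most $b$ non-universal pointers. Your explicit incompatibility-graph/handshake formulation merely makes that counting step precise --- and in fact cleans up a slip in the paper's wording, which says ``at most $b/2$ pairs of incompatible pointers'' where the hypothesis actually gives at most $b$ such pairs, exactly as in your version.
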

\begin{proof} By Corollary \ref{evincompat}, each non-universal pointer is incompatible with at least two other pointers. Since there are at most $b/2$ pairs of incompatible pointers, there are at most $b$ non-universal pointers, and therefore there are at least $2n-1-b$ universal pointers.
\end{proof}

\begin{theorem}\label{wincon} Let $\pi\in S_n$ be $P-$good, where $\vert \SP(\pi)\vert=k$. Write $c=k-\vert P\vert$. Assume further that $\vert P\vert>3c$, and let $A\subset \SP(\pi)$ satisfy $\vert A\vert\geq \left\lfloor\frac{k}{2}\right\rfloor+1+2c$. Then, ONE has a winning strategy in $\CDS(\pi,A)$.
\end{theorem}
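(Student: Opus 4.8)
The plan is to recast the game as a token‑removal game on $\SP(\pi)$. By Lemma~\ref{lemma:strpileremoval} (and Lemma~\ref{goodclosed}, which keeps the position $P'$‑good after each move), a single $\mathbf{cds}$ move deletes exactly the two strategic‑pile elements attached to the chosen compatible pointer pair, so the game runs $\frac{k-1}{2}$ moves from $k$ tokens down to one, and ONE wins iff the surviving token lies in $A$. Split $\SP(\pi)=S_P\sqcup S_c$ into the $|P|$ \emph{good} tokens (pointers of $P$, pairwise compatible by condition~(1) and staying so under any move by Lemma~\ref{goodlemma}) and the $c=k-|P|$ \emph{bad} tokens. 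Writing $B=\SP(\pi)\setminus A$, I introduce the potential
\[
\Psi \;=\; |A|-|B|-4c .
\]
Since $k$ is odd (maximal‑pile parity), the hypothesis $|A|\ge\lfloor k/2\rfloor+1+2c$ gives $\Psi\ge 1$ at the start. A move deleting $\alpha\in\{0,1,2\}$ tokens of $A$ and $\gamma\in\{0,1,2\}$ bad tokens changes the potential by $\Delta\Psi=2-2\alpha+4\gamma$; hence $\Delta\Psi\ge 2$ whenever the move deletes a bad token ($\gamma\ge1$) or two $B$‑tokens ($\alpha=0,\gamma=0$), and the \emph{only} move with $\Delta\Psi<0$ is the deletion of two good $A$‑tokens, which costs exactly $2$.

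The strategy for ONE proceeds in two phases. In \textbf{Phase 1} (while bad tokens remain) ONE, on each of its turns, deletes one bad token, pairing it with a good token (valid because any good pointer is compatible with it; see below). Each such move has $\Delta\Psi\ge 4$, while TWO's reply costs at most $2$, so $\Psi$ strictly increases over a round and, crucially, $\Psi\ge 3$ once the last bad token is cleared. The bound $|P|>3c$ is used precisely here: clearing all $c$ bad tokens consumes at most $c$ good partners, and TWO removes at most $2c$ good tokens in the interleaving replies, so fewer than $3c<|P|$ good tokens are spent and a good partner is always available. When Phase 1 ends the position is $P''$‑good with $c''=0$, i.e.\ $P''$‑excellent, and $\Psi''=|A''|-|B''|\ge 3$. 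In \textbf{Phase 2} I invoke the already‑established analysis of $P$‑excellent positions: if it is ONE's turn, $|A''|>|B''|$ and Corollary~\ref{xwin} finishes; if it is TWO's turn, the Grundy‑number computation preceding Corollary~\ref{xwin} applies, and $\Psi''\ge 2$ forces $|B''|\le m''-2$ (where $|P''|=2m''-1$), which lands in the losing range for the mover TWO regardless of the parity of $m''$. Thus ONE wins in either sub‑case, and when $c=0$ from the outset the argument degenerates directly to Corollary~\ref{xwin}.

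The main obstacle is the legality claim underlying Phase 1: that ONE can always delete a bad token, i.e.\ that no bad $B$‑token ever becomes \emph{stuck} (incompatible with every remaining pointer). This is where the $P$‑good structure must be exploited carefully. Condition~(2) forces the compatibility of the good clique with any fixed outside pointer to be uniform, and in the setting we care about—$P$ the set of universal pointers, as produced by Lemma~\ref{popgood}—this makes every bad pointer compatible with \emph{all} of $P$; combined with $|P|>3c$ guaranteeing a surviving good partner, every bad token is removable on demand. I expect the delicate point to be ruling out a genuinely stuck bad pointer in the fully general $P$‑good case: one should argue that a permanently isolated token would force the game to terminate at a fixed point of strategic‑pile size at least two, contradicting the size‑one characterization of $\mathbf{cds}$ fixed points, and that Corollary~\ref{evincompat} (each pointer is incompatible with an even number of pointers) plus Lemma~\ref{goodlemma} keep the clearing moves available. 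Finishing this compatibility bookkeeping, together with the endgame parity handled via $\Psi''\ge 2$ above, completes the proof.
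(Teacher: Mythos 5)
Your overall plan---have ONE spend its early moves eliminating the non-$P$ (``bad'') pointers while Lemma~\ref{goodclosed} keeps the position $P'$-good, then finish in the $P$-excellent endgame via the Grundy computation and Corollary~\ref{xwin}---is the same strategy the paper uses; the paper merely packages the bookkeeping as an induction (after one round in which ONE clears a bad pointer and TWO replies arbitrarily, it checks $k'=k-4$, $|P'|\ge|P|-3>3c'$, $c'\le c-1$, $|A'|\ge|A|-4\ge\left\lfloor k'/2\right\rfloor+1+2c'$, and re-applies the theorem) rather than as a potential function. But as written your Phase~1 has a genuine gap: its legality rests on the claim that every bad pointer can be paired with a \emph{good} one. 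Condition (2) of $P$-goodness only says that a pointer outside $P$ is compatible with all of $P$ or with none of $P$; a bad pointer compatible with none of $P$ but with another bad pointer is perfectly consistent with the definition and contradicts nothing---in particular your ``a permanently stuck token forces a fixed point with strategic pile of size at least two'' sketch does not address this case, because such a pointer is not stuck. This possibility breaks both your pairing rule and the budget count ``clearing all $c$ bad tokens consumes at most $c$ good partners,'' which is where you invoke $|P|>3c$. The repair is exactly the paper's move: by condition (3) the bad pointer's pile element lies in $\SP(\pi)$, so the Strategic Pile Removal Theorem (Theorem 2.19 of \cite{ADAMYK}) supplies \emph{some} compatible partner, good or bad; either way $c$ drops by at least one per round, and the counting still closes (a bad--bad pair only helps, since then $|P|$ is untouched by ONE's move).

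There is also an arithmetic slip: your claim that each Phase~1 clearing move has $\Delta\Psi\ge 4$ is false. If both deleted tokens lie in $A$, then $\alpha=2$, $\gamma=1$, and $\Delta\Psi=2-2\alpha+4\gamma=2$; consequently ``$\Psi$ strictly increases over a round'' also fails, since TWO can answer with $\Delta\Psi=-2$. This slip happens not to be fatal: the weaker invariant ``$\Psi\ge1$ at each of ONE's turns, hence $\Psi\ge3$ immediately after each clearing move'' still guarantees $\Psi\ge3$ at the moment the last bad token disappears (and $\Psi\ge3$ at ONE's turn if TWO is the one who removes it), after which your parity argument in the excellent endgame goes through. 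So the potential-function packaging is salvageable, but both the legality of Phase~1 in the general $P$-good case and the $\Delta\Psi$ accounting need to be rewritten along the lines above.
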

\begin{proof} If $c=0$, then every pointer is either in $P$ or part of an adjacency, so $\pi$ is $P-$excellent. We have $2\vert A\vert>k$, so $\vert A\vert>k-\vert A\vert$, and ONE has a winning strategy by Corollary \ref{xwin}.

Now, assume $c\geq 1$. Let $p^*$ be a pointer that is not part of an adjacency and is also not in $P$. 
Since $\sigma_{n,p}(n)\in \SP(\pi)$, the Strategic Pile Removal Theorem (\cite{ADAMYK}, Theorem 2.19), there is a pointer $q^*$ such that $p^*$ and $q^*$ are compatible. 
Thus, let ONE perform \textbf{cds} with valid pointer context $p^*,q^*$. 
Let TWO perform \textbf{cds} with valid pointer context $r^*,s^*$, and let $\pi'=\mathbf{cds}_{r^*,s^*}(\mathbf{cds}_{p^*,q^*}(\pi))$. 
Let $A'=A\cap \SP(\pi')$, let $k'=\vert \SP(\pi')\vert$, let $P'=P\setminus\{\sigma_{n,p}(n), \sigma_{n,q}(n),\sigma_{n,r}(n), \sigma_{n,s}(n)\}$, and let $c'=k'-|P'|$. 
It suffices to show that ONE has a winning strategy in $\CDS(\pi',A')$. 
By Lemma \ref{goodclosed}, $\pi'$ is $P'-$good. 
Moreover, since $p\notin P$, $|P|-|P'|\leq 3$. 
We have $\{\sigma_{n,p}(n),\sigma_{n,q}(n),\sigma_{n,r}(n),\sigma_{n,s}(n)\}\subset\SP(\pi)$, and $\SP(\pi')=\SP(\pi)\setminus\{\sigma_{n,p}(n),\sigma_{n,q}(n),\sigma_{n,r}(n),\sigma_{n,s}(n)\}$ by \cite{ADAMYK}, Lemma 2.15. 
Thus $k-k'=4$. Then, \[c'=k'-|P'|\leq k-4-|P|+3= c-1.\] 
Moreover, since $A\setminus A'=A\cap (\SP(\pi)\setminus \SP(\pi'))$, we have $|A|-|A'|\leq 4$. 
Thus, \[|P'|\geq |P|-3>3c-3\geq 3c',\] and \[|A'|\geq |A|-4\geq \left\lfloor \frac{k}{2} \right\rfloor+1+2c-4\geq \left\lfloor \frac{k'+4}{2} \right\rfloor +1+2(c'+1)-4=\left\lfloor \frac{k'}{2} \right\rfloor+1+2c'.\] 
By induction, ONE has a winning strategy in $\CDS(\pi',A')$.
\end{proof}

\begin{corollary} Let $b>0$. Choose $n$ such that $2n-1-b>3b$, and let $\pi\in \mathcal{M}_{2n}$ have at least $\binom{2n-1}{2}-b$ valid pointer contexts. If $A\subset \SP(\pi)$ satisfies $|A|\geq n+2b$, then ONE has a winning strategy in $\CDS(\pi,A)$.
\end{corollary}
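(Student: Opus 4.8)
The plan is to deduce this corollary directly from Theorem \ref{wincon} by specializing its general $P$-good hypotheses to the concrete situation of a permutation $\pi\in\mathcal{M}_{2n}$ with many valid pointer contexts. First I would record the relevant quantities. Since $\pi$ has maximal strategic pile in $S_{2n}$ with $2n$ even, we have $k := |\SP(\pi)| = 2n-1$, so $\lfloor k/2\rfloor = n-1$ and the target threshold in Theorem \ref{wincon} becomes $\lfloor k/2\rfloor + 1 + 2c = n + 2c$.

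Next I would let $P$ be the set of universal pointers of $\pi$. Lemma \ref{popgood} then guarantees that $\pi$ is $P$-good, so the structural hypothesis of Theorem \ref{wincon} is satisfied. To control $c = k - |P|$, I would invoke Lemma \ref{bigmovesbigpop}: because $\pi$ has at least $\binom{2n-1}{2} - b$ valid pointer contexts, there are at least $2n-1-b$ universal pointers, i.e. $|P| \geq 2n-1-b$. Hence $c = (2n-1) - |P| \leq b$.

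It then remains only to check the two numerical hypotheses of Theorem \ref{wincon}. For the condition $|P| > 3c$, I would combine $|P| \geq 2n-1-b$ with the standing assumption $2n-1-b > 3b$ and the bound $c\leq b$, giving $|P| \geq 2n-1-b > 3b \geq 3c$. For the condition $|A| \geq \lfloor k/2\rfloor + 1 + 2c = n + 2c$, I would exploit monotonicity in $c$: since $c \leq b$, the hypothesis $|A| \geq n + 2b$ yields $|A| \geq n + 2b \geq n + 2c$. With both conditions confirmed, Theorem \ref{wincon} delivers a winning strategy for ONE in $\CDS(\pi, A)$.

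There is essentially no hard step here; the argument is entirely bookkeeping that feeds the three earlier results (Lemma \ref{popgood}, Lemma \ref{bigmovesbigpop}, and Theorem \ref{wincon}) into one another. The only point needing genuine care is that Lemma \ref{bigmovesbigpop} produces the inequality $c \leq b$ rather than an equality, so both numerical hypotheses must be verified using the fact that the relevant thresholds are increasing in $c$ together with the bound $c \leq b$; this is what makes the chosen parameter $|A| \geq n + 2b$ strong enough uniformly over all admissible values of $c$.
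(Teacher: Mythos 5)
Your proposal is correct and follows essentially the same route as the paper: take $P$ to be the universal pointers, apply Lemma \ref{popgood} and Lemma \ref{bigmovesbigpop}, and feed the resulting bounds into Theorem \ref{wincon} with $k=2n-1$. In fact you are slightly more careful than the paper's own proof, which simply ``lets $c=b$'' even though Theorem \ref{wincon} defines $c=k-|P|$; your observation that one only gets $c\leq b$ and must then use monotonicity of the thresholds $3c$ and $n+2c$ in $c$ is the correct way to close that small gap.
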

\begin{proof} Let $P$ be the set of universal pointers of $\pi$. By Lemma \ref{popgood}, $\pi$ is $P-$good. By Lemma \ref{bigmovesbigpop}, $|P|\geq 2n-1-b$. Let $c=b$, and let $k=2n-1=|\SP(\pi)|$. We have $|P|\geq 2n-1-b>3b=3c$, and $|A|\geq n+2b=\left\lfloor \frac{2n-1}{2}\right\rfloor+1+2c$. Thus, by Theorem \ref{wincon}, ONE has a winning strategy.
\end{proof}

\begin{corollary} Let $b>0$. For all $r>1/2$, there exists an $N$ such that for all $n\geq N$, if $\pi\in \mathcal{M}_{2n}$ has at least $\binom{2n-1}{2}-b$ available valid pointer contexts and $A\subset \SP(\pi)$ has $|A|\geq r|\SP(\pi)|$, then ONE has a winning strategy in $\CDS(\pi,A).$
\end{corollary}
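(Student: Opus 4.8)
The plan is to deduce this statement directly from the immediately preceding corollary by an asymptotic comparison of the two lower bounds on $|A|$. First I would record that, since $\pi\in\mathcal{M}_{2n}$ has maximal strategic pile and the length $2n$ is even, Definition \ref{def:maxpile} gives $|\SP(\pi)|=2n-1$. Hence the hypothesis $|A|\geq r|\SP(\pi)|$ becomes $|A|\geq r(2n-1)$, and the goal is to locate a threshold $N$ beyond which this ``proportional'' bound dominates the ``absolute'' bound $n+2b$ appearing in the previous corollary.

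Next I would verify that both hypotheses of that corollary can be guaranteed for $n$ large. It requires $2n-1-b>3b$, i.e. $n>2b+\tfrac12$, and it requires $|A|\geq n+2b$. For the latter it suffices to have $r(2n-1)\geq n+2b$, since then $|A|\geq r(2n-1)\geq n+2b$. Rearranging, $r(2n-1)\geq n+2b$ is equivalent to $n(2r-1)\geq 2b+r$, which, because $r>\tfrac12$ forces $2r-1>0$, holds precisely when $n\geq\frac{2b+r}{2r-1}$.

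I would then set
\[
N=\max\left\{\left\lceil 2b+1\right\rceil,\ \left\lceil\tfrac{2b+r}{2r-1}\right\rceil\right\},
\]
so that for every $n\geq N$ both $2n-1-b>3b$ and $r(2n-1)\geq n+2b$ hold. For such $n$ the hypotheses of the preceding corollary are all met: by assumption $\pi\in\mathcal{M}_{2n}$ has at least $\binom{2n-1}{2}-b$ valid pointer contexts, and $|A|\geq r(2n-1)\geq n+2b$. Applying that corollary then yields that ONE has a winning strategy in $\CDS(\pi,A)$, as required.

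There is no serious obstacle here; the content is purely the observation that the proportional bound $r(2n-1)$ with $r>\tfrac12$ grows strictly faster in $n$ than the absolute bound $n+2b$, so the two cross at a finite threshold. The only point deserving care is that $b$ is held fixed while $n\to\infty$, which is exactly what makes the gap $n(2r-1)-(2b+r)$ eventually positive; I would emphasize that the resulting $N$ depends on both $b$ and $r$ but is uniform over the choice of $\pi$ and $A$.
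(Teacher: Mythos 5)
Your proof is correct and takes exactly the approach the paper intends: the paper states this corollary without proof as an immediate consequence of the preceding corollary, and your argument (noting $|\SP(\pi)|=2n-1$, then choosing $N$ so that $2n-1-b>3b$ and $r(2n-1)\geq n+2b$ for all $n\geq N$, which is possible since $2r-1>0$) is precisely that deduction.
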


\section{Future Work}
\begin{enumerate}
    \item Compute the number of valid pointer contexts from a periodic difference sequence. Given this result, it would be possible to compute the number of permutations in $S_{2n}$ with maximal strategic pile and a given number of valid pointer contexts $\bmod (2n-1)^2$.
    \item Classify permutations with other strategic pile sizes.
    \item Prove the boundedness or exhibit the unboundedness of Grundy numbers of \textbf{cds} games on permutations with maximal strategic pile.
\end{enumerate}

\section{Acknowledgments}
This research, conducted at the REU CAD site at Boise State University, was funded by NSF Grant DMS-1659872 and Boise State University.

\newpage

\end{document}